\colorlet{shadecolor}{yellow!20}
\numberwithin{equation}{section}
\newtheorem{theorem}{Theorem}[section]
\newtheorem{lemma}[theorem]{Lemma}
\newtheorem{claim}[theorem]{Claim}
\newtheorem{conjecture}[theorem]{Conjecture}
\newtheorem{cor}[theorem]{Corollary}
\newcommand{\thistheoremname}{}
\newtheorem*{genericthm*}{\thistheoremname}
\newenvironment{namedthm*}[1]
  {\renewcommand{\thistheoremname}{#1}%
   \begin{genericthm*}}
  {\end{genericthm*}}
\theoremstyle{definition}       {         
\newtheorem{remark}[theorem]{Remark}

}
\newcommand{\Z}{\mathbb{Z}}
\newcommand{\R}{\mathbb{R}}
\newcommand{\C}{\mathcal{C}}
\newcommand{\G}{\mathcal{G}}
\newcommand{\LL}{\mathcal{L}}
\newcommand{\Qo}{\mathcal{Q}}
\newcommand{\Qc}{\overline{\mathcal{Q}}}
\newcommand{\eps}{\varepsilon}
\newcommand{\K}{\mathcal{K}}
\newcommand{\I}{\mathcal{I}}
\newcommand{\J}{\mathcal{J}}
\newcommand{\inte}{\text{int}\,}
\def\beq{\begin{equation}}
\def\eeq{\end{equation}}
\newcommand{\semmi}[1]{{}}
\newcommand{\bp}{\begin{proof}}
\newcommand{\ep}{\end{proof}}
\DeclareMathOperator{\ubdim}{\overline{dim}_M}
\DeclareMathOperator{\hdim}{\dim_H}
\DeclareMathOperator{\diam}{diam}
\DeclareMathOperator{\graph}{graph}
\DeclareMathOperator{\proj}{proj}
\newcommand{\su}{\subset}
\newcommand{\sm}{\setminus}
\renewcommand{\phi}{\varphi}
\newcommand{\mj}[1]{}
\newcommand{\RP}{\mathbb{RP}}
\author{Tam\'as Keleti and Andr\'as M\'ath\'e}
\begin{document}

\title[Equivalent Kakeya conjectures]{Equivalences between different forms of the Kakeya conjecture and
duality of Hausdorff and packing dimensions for additive complements}

\address
{Institute of Mathematics, E\"otv\"os Lor\'and University, 
P\'az\-m\'any P\'e\-ter s\'et\'any 1/c, H-1117 Budapest, Hungary}

\email{tamas.keleti@gmail.com}

\address
{Mathematics Institute, University of Warwick, Coventry, CV4 7AL, UK}

\email{a.mathe@warwick.ac.uk}

\subjclass[2010]{Primary 28A78; Secondary: 11B13.}

\keywords{Besicovitch set, Kakeya conjecture, 
Hausdorff dimension, packing dimension, duality,
Minkowski sum, sumset,
additive complement}

\thanks{This research was supported  
by the Hungarian National Research, Development and Innovation Office -- NKFIH, 124749.
The first author is grateful to the Alfr\'ed R\'enyi Institute,
where he was a visiting researcher during a part of this project.
He was also partially supported 
by the Hungarian National Research, Development and Innovation Office -- NKFIH, 104178.
} 

\begin{abstract}
The Kakeya conjecture is generally formulated as one the following statements: every compact/Borel/arbitrary subset of $\R^n$ that contains a (unit) line segment in every direction has Hausdorff dimension $n$; or, sometimes, that every closed/Borel/arbitrary subset of $\R^n$ that contains a full line in every direction has Hausdorff dimension $n$. These statements are generally expected to be equivalent.
Moreover, the condition that the set contains a line (segment) in every direction is often relaxed by requiring a line (segment) for a ``large'' set of directions only, where large could mean a set of positive $(n-1)$-dimensional Lebesgue measure.

Here we prove that all the above forms of the Kakeya conjecture are indeed equivalent. In fact, we prove that there exist $d\le n$ and a compact set $C\subset \R^n$ of Hausdorff dimension $d$ that contains a unit line segment in every direction (and also a closed set of dimension $d$ that contains a line in every direction) such that
every set $S\subset \R^n$ that contains a line segment in every direction of a set of Hausdorff dimension $n-1$, must have dimension at least $d$. 

A crucial lemma we use is the following: For every Borel set $A\subset \R^n$ of 
positive Lebesgue measure
there exists a compact set $E$ of 
upper Minkowski dimension zero 
such that the 
Minkowski sum $A+E$ has non-empty interior.

This lemma can be also used to obtain results on the duality of Hausdorff and packing dimensions via additive complements: For any 
non-empty Borel set $A\su\R^n$ we show that
\[
\begin{split}
\dim_H A 
   & =  n - \inf\{\dim_P B : 
           B \subset \R^n \textrm{ Borel},\ A+B=\R^n\},
           \text{ and}\\
 \dim_P A     
  & =  n - \inf\{\hdim B : 
           B \subset \R^n \textrm{ Borel},\ A+B=\R^n\}.   
\end{split} 
\]
\end{abstract}

\maketitle

\section{Introduction}

\subsection{Equivalent forms of the Kakeya conjecture}
The Kakeya conjecture is a famous open problem in geometric measure theory, related to harmonic analysis and additive combinatorics. It is frequently stated in many different forms, most of which (at least the more conservative ones) are generally expected to be to equivalent to each other. The aim of this paper is to show that many of these are indeed equivalent.
 
Let us agree on what we mean by \emph{the Kakeya conjecture} in this paper. We will say that a subset of $\R^n$ is a \emph{Besicovitch set} if it contains a unit line segment in every direction.
The \emph{Kakeya conjecture} is the statement that every Besicovitch
set has Hausdorff dimension $n$.
This trivially holds for $n=1$, known to be true for $n=2$
(Davies \cite{Da}) and open for $n\ge 3$.

The Kakeya conjecture is sometimes stated for compact Besicovitch sets only. The variant in which
we require full lines instead of line segments also
appears often (in which case, we might want to require the set to be closed).

It is widely believed that all of these variants
are equivalent.
Moreover, it is also believed that the condition of having a line segment in \emph{every} direction can be relaxed: requiring a line segment in a set of directions of positive $(n-1)$-dimensional Lebesgue measure should give rise to an equivalent conjecture. 
Those lower bounds for the Hausdorff dimension of compact 
Besicovitch sets that are obtained via partial 
results for the Kakeya maximal function conjecture
give the very same bounds for compact sets that contain line segments in directions forming a set of positive $(n-1)$-dimensional Lebesgue measure.

In this paper we show that these forms of the Kakeya conjecture are equivalent. In fact, we prove that the Hausdorff dimension of all sets mentioned above (that could feature in a form of the Kakeya conjecture) are minimized by the Hausdorff dimension of a compact Besicovitch set. More precisely, we prove the following theorem.

\begin{theorem}\label{all_new}
For every $n$ there is a unique real number $d\le n$ with the following properties.
\begin{enumerate}
\item There exists a \emph{compact} Besicovitch set of Hausdorff dimension $d$, that is, a compact set that contains a \emph{unit line segment} in \emph{every} direction.
\item There exists a \emph{closed} set of Hausdorff dimension $d$ that contains a \emph{line} in \emph{every} direction.
\item Let $D$ be a set of directions (a subset of $\mathbb{RP}^{n-1}$) of Hausdorff dimension $n-1$, and let $S$ be an arbitrary subset of $\R^n$ that contains a line segment in every direction of $D$. Then $S$ has Hausdorff dimension at least $d$.\end{enumerate}
\end{theorem}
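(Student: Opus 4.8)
The plan is to produce a single real number $d$ and a single compact Besicovitch set $C$ of dimension $d$ that simultaneously witnesses (1), gives (2) by taking the union of translated copies along each line, and lies below every set in (3). The heart of the matter is to define $d$ as the infimum, over all compact Besicovitch sets, of their Hausdorff dimension, and then show two things: first, that this infimum is attained by a compact set; second, that this same $d$ is a lower bound for the dimension of every set $S$ as in (3). Uniqueness of $d$ will then be immediate, since any $d$ satisfying (1) is a dimension of a compact Besicovitch set, hence $\ge \inf = d$, while (3) applied with $D = \mathbb{RP}^{n-1}$ (which has dimension $n-1$) and $S$ the compact Besicovitch set from (1) forces that dimension to be $\ge d$, so the value is pinned down.

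First I would set $d = \inf\{\hdim C : C \subset \R^n \text{ compact Besicovitch}\}$ and prove the infimum is attained. For this I would take a sequence $C_k$ of compact Besicovitch sets with $\hdim C_k \to d$; one cannot simply intersect or union them, so instead I would rescale and translate the $C_k$ to sit inside a fixed ball and arrange them to be pairwise far apart (e.g. $C_k$ inside a ball $B_k$ with the $B_k$ disjoint and shrinking), then take $C = \cl(\bigcup_k C_k)$ together with a single limit point. A rescaled copy of a Besicovitch set is still Besicovitch, so each $C_k$ contributes all directions, hence $C$ is a compact Besicovitch set, and $\hdim C = \sup_k \hdim C_k$ if the pieces shrink fast enough so the closure adds only a dimension-zero set; thus $\hdim C = d$. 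This handles (1), and (2) follows by replacing each unit segment by the full line through it: the resulting set $C' = \bigcup_{\ell} \ell$ over lines $\ell$ meeting $C$ in a segment is a countable-controlled union (parametrize lines by a $\sigma$-compact family) that is closed and still has dimension $d$, using countable stability of Hausdorff dimension.

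The main obstacle is (3): why does \emph{every} set $S$ containing a segment in every direction of an arbitrary $D$ with $\hdim D = n-1$ satisfy $\hdim S \ge d$? Here I would argue by contradiction: suppose $\hdim S < d$. The idea is to manufacture from $S$ a genuine compact Besicovitch set of dimension close to $\hdim S < d$, contradicting minimality of $d$. This is where the crucial lemma enters. Since $\hdim D = n-1 > n-1-\eps$ for small $\eps$, viewing $D \subset \mathbb{RP}^{n-1}$ (an $(n-1)$-manifold) and applying the Minkowski-sum lemma inside that manifold, there is a compact set $E$ of upper Minkowski dimension $< \eps$ with $D + E$ (or the appropriate image) having nonempty interior, so $D$ ``thickened'' by a low-dimensional set of rotations covers an open set of directions. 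One then takes finitely (or countably) many rotations from $E$ applied to $S$; the union $\bigcup_{e \in E} R_e S$ contains a segment in every direction of an open subset of $\mathbb{RP}^{n-1}$ and has dimension at most $\hdim S + \ubdim E < \hdim S + \eps$ (a rotation preserves Hausdorff dimension, and a set-sum of a $\hdim \le t$ set with an $\ubdim \le u$ set has Hausdorff dimension $\le t+u$). From ``segments in an open set of directions'' one passes to ``segments in every direction'' by a further finite union of rotations (standard). This yields a Besicovitch set of dimension $< \hdim S + \eps$; passing to a compact subset (or taking closure carefully) and letting $\eps \to 0$ contradicts $\hdim S < d$. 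The delicate points to get right are: working with the lemma inside the projective space rather than $\R^n$, ensuring the ``sumset'' of rotations can be realized as an honest countable union so Hausdorff dimension is controlled, and checking that a compact (rather than merely Borel) Besicovitch set of the required dimension can be extracted, for which one can invoke the attainment argument of step one or a direct closure argument.
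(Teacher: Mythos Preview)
Your outline has two genuine gaps.

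\textbf{Attaining the infimum.} Your construction of a compact Besicovitch set of dimension exactly $d$ does not work. If $C_k$ are compact Besicovitch sets with $\hdim C_k \to d$, then by countable stability $\hdim\big(\bigcup_k C_k\big) = \sup_k \hdim C_k$, and this supremum equals $d$ only if \emph{every} $C_k$ already has dimension $d$ --- which is exactly what you are trying to prove. Shrinking the pieces does not help: rescaling preserves Hausdorff dimension, so the supremum is unaffected. The paper avoids this entirely by a Baire category argument: it sets up a complete metric space $\K([0,1]^n,\RP^{n-1})$ of parametrizations of compact Besicovitch-type sets and shows that the \emph{typical} element (in the Baire sense) realizes the minimal dimension. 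This is how the infimum is shown to be attained.

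\textbf{From segments to lines.} Your passage to (2) by ``replacing each unit segment by the full line through it'' assumes that this operation does not increase Hausdorff dimension. That statement is precisely Keleti's conjecture from \cite{Ke}; it is known only for $n=2$ and is open for $n\ge 3$. The paper does \emph{not} extend the segments of a given Besicovitch set. Instead, the same Baire category argument simultaneously produces, for a typical parameter $K$, both a compact set $S_1(K)$ with unit segments and a closed set $S_\infty(K)$ with full lines, each of the minimal dimension. The key input (part (1) of Lemma~\ref{l:typical}) is that by cutting a compact Besicovitch set $B$ into finitely many pieces whose segment-midpoints lie in small balls, translating, and dilating, one obtains for any prescribed $\ell$ a compact set of the same dimension containing segments of length $\ell$ with midpoints in a prescribed small cube.

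Your strategy for (3) is essentially the paper's, including the use of the key sumset lemma, but two points need care. First, the lemma is stated for Borel sets in $\R^{n-1}$, so one must pass from the arbitrary set $S$ (and arbitrary $D$) to a Borel or compact situation; the paper does this via a $G_\delta$ hull and the Jankov--von~Neumann selection theorem (Lemma~\ref{l:compact}). Second, $\bigcup_{e\in E} R_e S$ is not a sumset; the dimension bound $\hdim S + \ubdim E$ requires realizing this union as a locally Lipschitz image of $S\times E$. The paper does this explicitly via the shear map $h((t,u),v)=(t,u+tv)$ after choosing coordinates so that directions are parametrized by slopes in $\R^{n-1}$ (Lemma~\ref{l:directionaddition}).
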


In particular, this theorem implies that for every compact Besicovitch set there is a closed set of the same Hausdorff dimension that contains \emph{lines} in every direction. It appears that this was not known. Our proof of this is rather short, the Baire category theorem in a carefully designed complete metric space gives this result (in Lemma~\ref{newtyp}).

In fact, it turns out that a typical compact Besicovitch set in the Baire category sense (see Section~\ref{s:Kakeya} for the precise setup) has the above minimal Hausdorff dimension $d$. 
This implies that it is enough to solve the Kakeya problem
for compact Besicovitch sets that have the same properties as the 
typical compact Besicovitch sets. 
For example, it is not hard to show that in a typical compact Besicovitch set $B\su\R^n$ the collection $\LL$ of the lines
of the unit line segments of $B$ has Hausdorff dimension 
$n-1$ and packing dimension $2n-2$.
Thus it is enough to prove the Kakeya conjecture
for compact Besicovitch sets with these properties.
Recently, Wang and Zahl \cite{WZ} posed the Sticky 
Kakeya conjecture, which is the $\dim_P\LL=n-1$ special
case of the Kakeya conjecture, and they proved it
for the $n=3$ case.
As we saw above, the $\hdim \LL=n-1$ or the 
$\dim_P \LL=2n-2$
special case would imply the Kakeya conjecture.

Some steps in the proof of Theorem~\ref{all_new} are of course straightforward, and some are not. For example, it is known that given an arbitrary Besicovitch set, there is always a Borel Besicovitch set with the same Hausdorff dimension (see the proof of Lemma~\ref{l:compact}).
Given a Borel Besicovitch set $B$, one could hope that techniques from descriptive set theory could yield a compact Besicovitch set $B'$ of the same dimension, perhaps as a subset of $B$. This is impossible.
Still, this is almost what we do. The Borel Besicovitch set $B$ can assumed to be $G_\delta$ (the intersection of countably many open sets).
Then we can obtain a compact subset $B'\subset B$  that contains unit line segments in directions of $D$, where $D$ is some set of directions of positive $(n-1)$-dimensional measure. So $B'$ is compact, has small dimension, but does not contain line segments in every direction. One question remains: Given a set $B'$ that contains many line segments, can we obtain a set $B''$ (of the same dimension) that contains line segments in all directions? The set of directions, $D$, in general, is a nowhere dense compact set, therefore taking the union of countably many affine images of $B'$ will not help. Still, perhaps surprisingly, the answer is positive. 
The key observation is that the following lemma could be true and it is all we need to complete the proof.

\begin{lemma}\label{topsecret0}
Let $A\subset \R^d$ be measurable with positive Lebesgue measure. Then there is a compact set $E$ of upper Minkowski dimension zero such that $A+E$ has non-empty interior. 
\end{lemma}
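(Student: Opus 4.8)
The plan is to reduce to the case where $A$ is extremely dense around a single point at all small scales, and then to build $E$ as a suitably lacunary Cantor-type set adapted to that density.

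\emph{Step 1 (reduction).} By inner regularity we may assume $A$ is compact, and by the Lebesgue density theorem $A$ has a point of density $1$. Translating that point to the origin, rescaling, and intersecting $A$ with a closed ball, we may assume $A \subseteq \bar B(0,1)$, $0 \in A$, and $|A \cap B(0,r)| \ge (1-\eps)\,|B(0,r)|$ for all $r \in (0,1]$, where $\eps > 0$ is a small constant to be chosen depending only on $d$. The structural point we will use is that $A$ is thick near the origin at every scale: every $z$ with $|z| \le \tfrac12$ satisfies $\dist(z,A) \le c(\eps)\,|z|$ with $c(\eps) \to 0$ as $\eps \to 0$; equivalently, the open $\eta$-neighbourhood $A^\eta$ of $A$ contains a ball around $0$ of radius of order $\eta/c(\eps)$ (much larger than $\eta$ when $\eps$ is small), and more generally $A^\eta$ has density close to $1$ in every ball centred near $0$. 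It suffices to find such an $E$ with $A + E$ containing a ball around the origin.

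\emph{Step 2 (the form of $E$, and where dimension $0$ comes from).} I look for $E = \{\sum_{k\ge 1} e_k : e_k \in F_k\}$, where each $F_k$ is a finite set with $0 \in F_k$ contained in a ball of radius $\rho_k$, the scales $\rho_1 > \rho_2 > \cdots$ tending to $0$ being chosen so that $\rho_{k+1}$ is fixed only after $F_1,\dots,F_k$. Then $E$ is compact and is covered by $\prod_{j\le k}|F_j|$ sets of diameter $O(\rho_{k+1})$, so its covering numbers satisfy $N(E,\delta) \le \prod_{j\le k}|F_j|$ for $\delta$ of order $\rho_{k+1}$. Hence, provided that at each step $|F_k|$ can be bounded in terms only of the data already fixed — and not of the forthcoming tiny scales $\rho_{k+1},\rho_{k+2},\dots$ — one may take each $\rho_{k+1}$ so small that $\log\big(\prod_{j\le k}|F_j|\big) = o(\log(1/\rho_{k+1}))$, which forces $\ubdim E = 0$. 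Thus the task reduces to covering a ball while introducing only a controlled number of new translates at each scale.

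\emph{Step 3 (the recursive covering).} Writing $E_k$ for the union of the level-$k$ cubes (of side $\ell_k \sim \rho_{k+1}$), one has $\bigcap_k E_k = E$, so $A + E$ contains the target ball precisely when $A + E_k$ does for every $k$; since $A + (\text{cube})$ equals a neighbourhood of $A$ translated by the centre of that cube, it suffices to arrange that $A^{\ell_k/2} + C_k$ contains the target ball for every $k$, where $C_k$ is the finite set of centres of the level-$k$ cubes. The sets $F_k$ (hence the $C_k$) are constructed recursively so as to maintain the invariant that for every $x$ in the target ball there is a partial sum $\sigma_k(x) \in F_1 + \cdots + F_k$, refining $\sigma_{k-1}(x)$, with $\dist\big(x - \sigma_k(x),\,A\big) \to 0$ as $k \to \infty$; since $x - \sigma_k(x) \to x - e(x)$ for the resulting $e(x) \in E$ and $A$ is closed, this gives $x - e(x) \in A$, i.e.\ $x \in A + E$. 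The step from scale $k$ to scale $k+1$ amounts to covering a bounded region by translates of the thick neighbourhood $A^{\ell_{k+1}/2}$; an averaging argument (for instance using $\int |D \cap (D+v)|\,dv \le |D|^2$ with $D$ the complement of $A^{\ell_{k+1}/2}$) shows that boundedly many well-chosen translates already accomplish almost all of this, with the number of translates controlled independently of the fine scales — this is exactly where the density of $A$ near the origin (Step 1) is used.

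\emph{Main obstacle.} The delicate part is to cover \emph{every} point of the ball rather than almost every point: at each finite stage a bounded number of translates only covers the ball up to a remainder that is null but topologically large (if $A$ is nowhere dense), and one must exploit the full fractal structure of $E$, together with closedness of $A$, to absorb that remainder in the limit. Making this work while keeping the number of translates at each scale small enough that $\ubdim E = 0$ is the heart of the argument, and it relies on the self-similar density of $A$ near the density point furnished by Step 1.
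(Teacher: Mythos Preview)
Your outline identifies the right shape of the argument --- a lacunary Cantor set $E$ whose cardinalities at each level are bounded independently of the later scales --- but it has a genuine gap, and indeed you flag it yourself as the ``Main obstacle'' without resolving it.

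The concrete problem is that the reduction in Step~1 is too weak to run Step~3. Density $1$ at a \emph{single} point (the origin) tells you that $|A\cap B(0,r)|\ge(1-\eps)|B(0,r)|$ for all small $r$, but at stage $k+1$ of your iteration you need to cover, with boundedly many translates of $A^{\ell_{k+1}/2}$, a region of diameter $\sim r$ while $\ell_{k+1}\ll r$. For that you need density of $A$ at scale $\ell_{k+1}$ near each of the points $x-\sigma_k(x)$, and these points are scattered throughout the target ball at distance $\sim r$ from the origin. A single density point gives no information about $A\cap B(z,\ell_{k+1})$ for $|z|\sim r\gg\ell_{k+1}$: the set $A$ could avoid most such balls entirely. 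Your averaging inequality $\int|D\cap(D+v)|\,dv\le|D|^2$ then says nothing useful, because at the relevant scale $D$ may have nearly full measure in the region of interest. So Step~3 as written does not go through, and the appeal to ``self-similar density of $A$ near the density point'' does not supply what is needed.

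The paper's proof fixes exactly this. A preliminary lemma extracts a compact $A'\subset A$ satisfying
\[
\lambda\bigl(A'\cap(a+[-r_k,r_k]^n)\bigr)\ge(2r_k)^n\eps_k\qquad\text{for \emph{every} }a\in A'\text{ and all large }k,
\]
i.e.\ uniform density lower bounds at \emph{all} points of $A'$ and at \emph{all} scales simultaneously (with $\eps_k\to 0$ summably). This is obtained by iteratively discarding maximal dyadic cubes where the density is too small; the summability of $\eps_k$ ensures something survives. With this in hand, the inductive step becomes: at every point $a\in A'$ and every scale $r_k$, a positive fraction (namely $\eps_k$) of the $d_{k+1}^n$ grid sub-cubes meet $A'$. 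The paper then chooses the level-$(k+1)$ cubes \emph{randomly}, so that for each fixed target point $x$ the probability of missing is at most $(1-c\eps_k^{n+1})^{m_{k+1}}$; a union bound over a finite $\delta$-net of target points, combined with a shrink-and-fatten trick (passing between concentric cubes $E''_k\subset E'_k\subset E_k$) upgrades ``covered on a net'' to ``covered everywhere''. Kolmogorov's zero--one law finishes. The random choice could be replaced by the Lorentz--Ruzsa covering lemma at each scale, but either way the multi-scale density at every point of $A'$ is indispensable, and that is the idea missing from your sketch.
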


The proof of this lemma, perhaps unsurprisingly, is a random construction, building on analogous covering statements for finite sets/groups.

Theorem~\ref{all_new} and the techniques used in this paper have their limits, of course.
The Kakeya maximal function conjecture is generally considered to be stronger than the Kakeya conjecture, so it is no surprise that we do not prove here that the latter implies the former. However, the Kakeya maximal function conjecture implies that certain other ``almost Besicovitch'' sets have full Hausdorff dimension as well, but the techniques in this paper do not seem to handle some of these. For example, this is the case with those compact sets $C\subset \R^n$ for which there is a line in every direction that intersects $C$ in a set of positive $1$-dimensional Lebesgue measure. The Kakeya maximal function conjecture implies that these sets $C$ have Hausdorff dimension $n$. It is unclear if the Kakeya conjecture implies the same. 
(One might think that Lemma~\ref{topsecret0} is applicable here as well and could give us a Besicovitch set of the same dimension as $C$. Unfortunately, this is not the case.)

\semmi{
\begin{theorem}\label{all_new}
Let $d$ be the Hausdorff dimension of a set in $\R^n$ that contains a line segment in a set of directions
of Hausdorff dimension $n-1$.
Then
\begin{enumerate}
\item there is a \emph{compact set} of Hausdorff dimension at most $d$ that contains a \emph{unit line segment} in 
\emph{every} direction;
\item and there is a \emph{closed set}  of Hausdorff dimension at most $d$
that contains a \emph{line} in \emph{every} direction.

\end{enumerate}
\end{theorem}
}

\subsection{Extending line segments to lines and more conjectures.}
We have seen above that the distinction between full lines and line segments does not play any role in the Kakeya conjecture. Indeed, for every set containing line segments in all directions there is another set, of equal Hausdorff dimension, that contains lines in all directions. However, from another perspective, the distinction between lines and line segments is at the heart of the Kakeya conjecture: what happens if we insist on extending the line segments we have to full lines?

In \cite{Ke}, one of us formulated the 
Line segment extension conjecture, which states that for any collection
of line segments the Hausdorff dimension of their union
is the same as the Hausdorff dimension of the union
of the corresponding lines. In \cite{Ke} it is proved
that this holds in the plane and the general case
would imply that any Besicovitch set in $\R^n$
has Hausdorff dimension at least $n-1$ and 
Minkowski dimension $n$.

In this paper, we prove that the following conjecture is \emph{equivalent} to the Kakeya conjecture:

\begin{conjecture}[General Kakeya conjecture]
\label{c:generalkakeya}
Let $E$ be an arbitrary subset of $\R^n$ and let
$D$ be
the set of directions in which $E$ contains a line
segment. 
Then
$$
\hdim E \ge \hdim D +1,
$$
provided $D$ is non-empty.
\end{conjecture}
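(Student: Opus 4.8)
The plan is to attack the inequality $\hdim E \ge \hdim D + 1$ directly, by building a measure on $E$ with the correct Frostman exponent and isolating the single estimate that resists a soft argument, rather than routing through the classical Kakeya conjecture. Write $s = \hdim D$. After a standard reduction to a Borel ($\sigma$-compact) subcollection of the segments carrying almost full direction dimension, fix by measurable selection, for each direction $d\in D$, a unit segment $\ell_d \subset E$ pointing in direction $d$, and form the tautological set
\[
K = \{(d,x) : d \in D,\ x \in \ell_d\} \subset \RP^{n-1} \times \R^n,
\]
which fibres over $D$ with one-dimensional fibres. Let $\pi\colon K \to \R^n$, $(d,x)\mapsto x$, be the projection that forgets the direction label. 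Since $\pi(K)\subseteq E$, it suffices to prove $\hdim \pi(K) \ge s+1$.

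First I would dispatch the easy half, $\hdim K \ge s+1$. For any $s'<s$ Frostman's lemma supplies a probability measure $\mu$ on $D$ with $\mu(B(d,r))\lesssim r^{s'}$; equipping each fibre $\ell_d$ with normalised arclength $\nu_d$ and setting $\rho = \int (\delta_d\times\nu_d)\,d\mu(d)$ produces a measure on $K$ with $\rho(B((d,x),r))\lesssim r^{s'+1}$, so $\hdim K \ge s'+1$, and letting $s'\uparrow s$ gives $\hdim K \ge s+1$. This step is routine and uses nothing beyond the product structure of $K$.

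The real content is transporting this bound across the collapsing projection $\pi$. Push $\rho$ forward to the nonzero measure $\theta = \pi_*\rho$ on $E$ and attempt to bound its Riesz energy $I_{s+1}(\theta)=\iint|x-y|^{-(s+1)}\,d\theta(x)\,d\theta(y)$, whose finiteness would yield $\hdim E\ge s+1$. Unwinding the pushforward, this energy is a double average over pairs of segments,
\[
I_{s+1}(\theta) = \iint_{D\times D} \Big( \iint_{\ell_d \times \ell_{d'}} |x-y|^{-(s+1)}\, d\nu_d\, d\nu_{d'}\Big)\, d\mu(d)\, d\mu(d'),
\]
and the inner integral blows up precisely when $\ell_d$ and $\ell_{d'}$ are nearly parallel and nearly coincident. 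Thus the whole problem reduces to a quantitative non-concentration estimate: segments whose directions differ by angle $\alpha$ can stay within distance $\lesssim\delta$ of each other only along a subsegment of length $\lesssim\delta/\alpha$, and one must show that integrating these overlaps against the $s$-dimensional measure $\mu$ remains bounded.

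This overlap estimate is the main obstacle, and I expect it to be genuinely the Kakeya phenomenon rather than a disguised appeal to Kakeya. In the extremal case $s=n-1$, where $\mu$ is comparable to surface measure on $\RP^{n-1}$, the required bilinear bound is equivalent to the Kakeya maximal function estimate; for $s<n-1$ it is a restricted, Furstenberg-type incidence inequality which is itself open. I would attack it by discretising to scale $\delta$, replacing segments by $\delta$-tubes $T$, seeking a single-scale bound on $\big\|\sum_T \mathbf{1}_T\big\|_{L^q}$ for the appropriate $q=q(s,n)$ (via induction on scales, a bush/hairbrush argument, or the polynomial method), and then promoting it to the Hausdorff statement by a multiscale Frostman iteration rather than a single $\delta$-neighbourhood count. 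The honest expectation is that Steps~1--2 go through verbatim by soft means and that \emph{all} the difficulty concentrates in this one overlap/incidence estimate; real progress would consist in establishing it for as wide a range of $s$ as possible, not in assuming it.
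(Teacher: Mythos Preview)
The statement you are attempting to prove is labeled in the paper as a \emph{conjecture}, not a theorem; the paper does not prove it. What the paper establishes is that Conjecture~\ref{c:generalkakeya} is \emph{equivalent} to the ordinary Kakeya conjecture, via Theorem~\ref{t:segments} and Theorem~\ref{t:mostgeneral}: given any $E$ and $D$ as in the conjecture, the paper constructs a compact Besicovitch set $B$ with $\hdim B \le n-1 + \hdim E - \hdim D$, so that if Kakeya holds then $\hdim B = n$ forces $\hdim E \ge \hdim D + 1$. The converse implication is immediate.

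Your proposal is a direct attack rather than an equivalence, but you correctly identify, and openly concede, that the entire content lies in the bilinear overlap estimate on the energy integral. That estimate is not a lemma you have proved; as you yourself note, at $s=n-1$ it is the Kakeya maximal function conjecture, and for smaller $s$ it is an open Furstenberg-type incidence bound. So what you have written is not a proof but a reduction of the General Kakeya conjecture to the Kakeya maximal function conjecture (or its Furstenberg analogues), which is already known and is in fact the reverse of what the paper accomplishes: the paper shows the General Kakeya conjecture follows from the \emph{weaker} Hausdorff-dimension Kakeya conjecture, whereas your route would need the \emph{stronger} maximal-function version. There is no genuine proof here, and there cannot be one without resolving Kakeya itself.
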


We claim that Conjecture~\ref{c:generalkakeya} implies the above mentioned Line segment extension conjecture of \cite{Ke}. 
Indeed, let $S$ be the union of a collection of line segments
in $\R^n$, let $L$ be the union of the collection of the corresponding lines, let $s=\hdim L$ and fix $\eps>0$.
Then,
using a result of Mattila \cite[10.10.~Theorem]{Ma} about the Hausdorff dimension of intersections of a given set with planes,
there exists a hyperplane $H$ such that $\hdim L\cap H>s-1-\eps$. 
By taking a projective transformation $\psi$ that maps $H$ to
the hyperplane at infinity (see e.g. \cite{Ke} for a similar argument) we get $\psi(S)$, which contains a 
line segment in a set of directions of Hausdorff dimension $s-1-\eps$. 
Then, using that projective transformations preserve Hausdorff dimension, and Conjecture~\ref{c:generalkakeya}, we obtain
$\hdim S = \hdim\psi(S)\ge s-1-\eps+1=s-\eps=\hdim L-\eps\ge \hdim S-\eps$,
which completes the proof of the claim.

The above mentioned connections can be summarised as
follows.
\begin{multline*}
    \text{Kakeya conjecture} \Longleftrightarrow 
    \text{General Kakeya conjecture}\\ 
    \big\Downarrow\\
    \text{Line segment extension conjecture}\\
    \big\Downarrow\\
    \text{Every Besicovitch set in }\R^n \text{ has Hausdorff dimension at least } n-1\\ 
\big\Downarrow\\
\text{Kakeya Conjecture for upper Minkowski dimension}
\end{multline*}

We remark that by modifying the proof in \cite{Wo}
of the fact that the Kakeya maximal function conjecture 
implies the Kakeya conjecture, one can prove directly that
the Kakeya maximal function conjecture also implies Conjecture~\ref{c:generalkakeya}.

The equivalence of Conjecture~\ref{c:generalkakeya} and the Kakeya conjecture actually follows from the following strengthening of Theorem~\ref{all_new}.


\begin{theorem}\label{t:mostgeneral}
For every $n\ge 2$ there exist a compact Besicovitch
set $B\su\R^n$ and a closed set $F\su\R^n$ that 
contains a line in every direction such that
$\hdim B=\hdim F$ and the following holds.

If $E$ is an arbitrary subset of $\R^n$ and
$D$ is a non-empty set of directions such that $E$
contains a line segment in every direction of $D$, then
$$
\hdim E \ge \hdim B - (n-1-\hdim D).
$$

In fact, a typical choice of $B$ and $F$ are good, in the sense of the Baire category theorem, applied in a suitable metric space.

\end{theorem}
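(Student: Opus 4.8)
The plan is to isolate the numerical invariant the theorem is really about, namely
\[
d=\inf\bigl\{\hdim E:\ E\subseteq\R^n\ \text{contains a line segment in every direction of some}\ D\subseteq\RP^{n-1}\ \text{with}\ \hdim D=n-1\bigr\},
\]
and then to prove: (a) an \emph{enlargement step} that, from a set with segments in a $t$-dimensional set of directions, manufactures a set with segments in \emph{every} direction at the cost of only $n-1-t$ in Hausdorff dimension; (b) the \emph{bookkeeping} with the already-known reductions between the various shapes of Besicovitch sets, which identifies $d$ with all the infima appearing in the different forms of the Kakeya conjecture and turns (a) into the inequality of the theorem; and (c) a \emph{Baire-category construction} of a compact Besicovitch set $B$ (and, via Lemma~\ref{newtyp}, a closed set $F$ with a line in every direction) of Hausdorff dimension \emph{exactly} $d$, valid for a comeagre set of choices. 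I expect (c) to be the real obstacle.

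For the enlargement step, I would work in an affine chart in which a direction is $(\xi,1)$ with $\xi\in\R^{n-1}$ and use the shear automorphisms $T_v(x,y)=(x+yv,y)$ of $\R^{n-1}\times\R$: each $T_v$ takes a segment parallel to $(\xi,1)$ to one parallel to $(\xi+v,1)$. So if $E$ has segments in every direction of $D$ with $\hdim D=t>0$, and $\widetilde D\subseteq\R^{n-1}$ is the chart image of a piece of $D$ of full dimension $t$, then for any $\widetilde E\subseteq\R^{n-1}$ the image $\Phi(E\times\widetilde E)$ of $\Phi\big((x,y),v\big)=(x+yv,y)$ contains a segment in every direction coming from $\widetilde D+\widetilde E$. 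Applying the Borel (dimension) form of Lemma~\ref{topsecret0} from the abstract to $\widetilde D$ with an exponent just below $t$ — its proof needs only the box-counting estimate valid for any set of dimension $>s$, so irregularity of $\widetilde D$ is harmless — gives a \emph{compact} $\widetilde E$ with $\ubdim\widetilde E<n-1-t+\eps$ for which $\widetilde D+\widetilde E$ has non-empty interior. Since $\Phi$ is locally Lipschitz and $\widetilde E$ is bounded, the standard inequality $\hdim(A\times B)\le\hdim A+\ubdim B$ and the fact that Lipschitz maps do not raise Hausdorff dimension yield $\hdim\Phi(E\times\widetilde E)\le\hdim E+\ubdim\widetilde E<\hdim E+(n-1-t)+\eps$. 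Covering $\RP^{n-1}$ by finitely many rotates of the resulting open set of directions and taking the corresponding finite union of isometric copies (after rescaling so the segments have length $\ge1$) produces a set $B_0$ containing a unit segment in every direction, with $\hdim B_0<\hdim E+(n-1-t)+\eps$; it is compact if $E$ was. When $t=0$ nothing is needed, since $\hdim E\ge1\ge d-(n-1)$ because $d\le n$.

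For the bookkeeping: the known reductions — an arbitrary Besicovitch set has a $G_\delta$ one of the same dimension (Lemma~\ref{l:compact}); a $G_\delta$ Besicovitch set has a compact subset with segments in a set of directions of positive $(n-1)$-dimensional measure; and feeding that compact subset into the enlargement step with the \emph{measure} form of Lemma~\ref{topsecret0} (so $\ubdim\widetilde E=0$) plus finitely many rotations — together show that every Besicovitch set contains, with no increase of Hausdorff dimension, a compact Besicovitch set. Hence $\inf\{\hdim B:B\text{ compact Besicovitch}\}=d$ (the inequality $\ge d$ is immediate) and every Besicovitch set has dimension $\ge d$; the same reductions combined with the enlargement step applied to a dimension-$(n-1)$ direction set identify $d$ with every other infimum featuring in a form of the Kakeya conjecture, giving the equivalences and Theorem~\ref{all_new}. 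Finally, for an arbitrary $E$ with segments in every direction of a non-empty $D$: replace $E$ by a $G_\delta$ hull of the same Hausdorff dimension, apply the enlargement step to obtain a Besicovitch set of dimension $<\hdim E+(n-1-\hdim D)+\eps$, note this is $\ge d$, and let $\eps\to0$ to get $\hdim E\ge d-(n-1-\hdim D)$; once we know $\hdim B=d$ this is exactly the asserted bound.

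That remaining point — attaining $\hdim B=d$ — is the crux, and the obstacle is that Hausdorff dimension is not upper semicontinuous for the Hausdorff metric, so no naive compactness argument works. The plan is a Baire-category argument in a carefully designed complete metric space whose points are compact Besicovitch sets inside a fixed ball, each recorded \emph{together with} a finite covering by $2^{-k}$-cubes for every $k$; the metric compares these coverings scale by scale, so Cauchy sequences stabilise each covering (whence completeness), from the coverings one reads off an upper bound for the box, hence Hausdorff, dimension of the limit set, and ``being Besicovitch'' is built into the space. The sets $B_0$ produced above with parameters tending to the optimum are dense, so for every $\eta>0$ the set of points of Hausdorff dimension less than $d+\eta$ is dense and open; thus the typical point is a compact Besicovitch set $B$ with $\hdim B=d$ exactly, and running Lemma~\ref{newtyp} on it (itself such a Baire argument, or a joint space producing $B$ and $F$ at once) yields the closed set $F$ with a line in every direction and $\hdim F=\hdim B=d$. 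Concretely, the hard part will be designing this space so that it is simultaneously complete, forces its points to be genuine compact Besicovitch sets whose recorded coverings faithfully control their dimension, and still contains the low-dimensional examples from the enlargement step as a dense subset.
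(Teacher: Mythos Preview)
Your steps (a) and (b) are exactly the paper's route: the shear map $\Phi((x,y),v)=(x+yv,y)$ is the paper's Lemma~\ref{l:directionaddition}, the application of the dimension form of Lemma~\ref{topsecret} to the chart image of $D$ is the paper's Theorem~\ref{t:segments}, and the measure-form plus Lemma~\ref{l:compact} plus finitely many rotations is the paper's Theorem~\ref{all}. So far the arguments coincide.

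Where you diverge is step~(c), and there you have made life much harder than necessary. You worry that Hausdorff dimension is not upper semicontinuous in the Hausdorff metric and therefore propose a bespoke space whose points carry, for every scale $2^{-k}$, a recorded finite cover. This machinery is not needed. The paper works in the ordinary complete metric space $\K([0,1]^n,\RP^{n-1})$ of compact $K\subset[0,1]^n\times\RP^{n-1}$ with $\proj_2 K=\RP^{n-1}$, equipped with the Hausdorff metric; for such $K$ one sets $S_\ell(K)=\bigcup_{(p,e)\in K}s(p,e,\ell)$. The point you are missing is that although $\hdim$ is not upper semicontinuous, for each fixed $s,\delta,\varepsilon$ the condition ``$S_\ell(K)$ admits a cover by sets of diameter $<\delta$ with $\sum(\diam)^s<\varepsilon$'' \emph{is} open in $K$ (covers persist under small perturbations, since $K\mapsto S_\ell(K)$ is Lipschitz). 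Hence $\{K:\hdim S_\ell(K)\le d\}$ is $G_\delta$ outright---this is the standard argument the paper cites from \cite{CCHK}. Density of this $G_\delta$ set follows by chopping an arbitrary $K$ into finitely many small boxes $C_i\times E_i$ and, inside each, planting a rescaled copy of a near-optimal Besicovitch set restricted to the directions in $E_i$ (Lemma~\ref{l:typical}(1)). Thus a \emph{typical} $K$ already gives a compact Besicovitch set $B=S_1(K)$ of minimal dimension; and the same typical $K$ simultaneously yields $F=S_\infty(K)$, so there is no need to run a second Baire argument or to invoke Lemma~\ref{newtyp} after the fact. In short: Lemma~\ref{newtyp}, which you cite only for manufacturing $F$ from $B$, already \emph{is} your step~(c), realised in a much simpler space than the one you sketch; your bookkeeping step~(b) shows that the minimal dimension among compact Besicovitch sets equals your $d$, and then the proof of Theorem~\ref{t:mostgeneral} is the two-line combination of Lemma~\ref{newtyp} with Theorem~\ref{t:segments} that the paper gives.
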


\begin{remark}
By a result in \cite{HKM}, if $\LL$ is a non-empty
family of lines that cover a set $E\su\R^n$
such that every line of $\LL$ intersects $E$ in a set of
Hausdorff dimension $1$, then 
$\hdim E = \dim\LL +1$, provided that $\dim\LL\le 1$.
Since the map that assigns to every line its direction is 
Lipschitz, this result implies
Theorem~\ref{t:mostgeneral} for $n=2$ and
also that 
the General Kakeya Conjecture above 
holds whenever $\hdim D\le 1$. 
(Thus a possible way to get closer to 
the General Kakeya Conjecture
and so also to the
Kakeya conjecture is to weaken the restriction 
$\hdim D\le 1$ to $\hdim D\le t$ for larger $t$.
Clearly, the equivalence of the Kakeya conjecture and 
the General Kakeya Conjecture
can be even more useful if one tries to disprove the
Kakeya conjecture.)
\end{remark}

\subsection{Duality between the Hausdorff and 
packing dimension of additive complements}
\label{sub:addcompl}

Our main tool for proving Theorem~\ref{t:mostgeneral} (and thus Theorem~\ref{all_new}) is the following extension of Lemma~\ref{topsecret0}.

\begin{lemma}
\label{topsecret} \ 
\begin{enumerate}
\item 
Let $A\subset \R^n$ be measurable with positive Lebesgue measure. Then there is a compact set $E$ of upper Minkowski dimension zero such that $A+E$ has non-empty interior. 
\item Let $A\subset\R^n$ be a non-empty Borel set of Hausdorff dimension greater than $s$. 
Then 
there is a compact set $E$ of upper Minkowski dimension $n-s$ 
such that $A+E$ has non-empty interior.
\end{enumerate}
\end{lemma}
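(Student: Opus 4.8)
The plan is to prove the two parts in sequence, deriving part (2) from part (1) by a reduction. I would start with part (1), which is the genuinely new combinatorial content. The statement is that any positive-measure set $A\su\R^n$ can be translated by a zero-dimensional compact set $E$ so that $A+E$ contains an open set. The key idea is a random/probabilistic covering construction, mimicking the classical fact that a positive-density subset of $\Z_N$ (or of a finite cube) can be translated by roughly $\log N$ shifts to cover the whole group. Concretely, I would first reduce to the case where $A$ is a finite union of dyadic cubes of some scale $2^{-k}$ occupying a fixed fraction of the unit cube (by inner regularity and a density point argument, after rescaling). Then, working modulo $1$ in each coordinate (i.e. on the torus $\mathbb{T}^n$), I would choose $m_k$ independent uniformly random translation vectors and show that with positive probability the $m_k$ translates of $A$ cover $\mathbb{T}^n$ except for a set of cubes at scale $2^{-k}$ that is a tiny fraction of everything; a union bound over the $2^{nk}$ cubes shows $m_k=C n k$ translates suffice to cover \emph{all} of $\mathbb{T}^n$ at that scale with positive probability, hence there is a \emph{deterministic} choice of $m_k = O(k)$ translates whose union is all of $\mathbb{T}^n$ at scale $2^{-k}$.

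The subtle point is that a single such covering only gives $A+(\text{finite set})\supset$ a dense-at-scale-$2^{-k}$ set, not an open set, and we must do this simultaneously for all scales while keeping the total translation set zero-dimensional. To handle this I would iterate: having covered at scale $2^{-k}$ using $m_k$ translates, pass to a finer scale and within each already-covered cube repeat the argument to correct the error, so that the full translation set is $E=\{0\}\cup E_1\cup E_2\cup\cdots$ where $E_j$ is a finite set of translations at scale $2^{-k_j}$ of cardinality $O(k_j)$ with $k_j$ growing very fast (say $k_{j+1}\gg 2^{k_j}$). Then $A+E$ will contain a fixed open cube because at every scale the covering is complete; and the upper Minkowski dimension of $E$ is $\limsup_j \frac{\log|E_1\cup\cdots\cup E_j|}{\log(1/2^{-k_j})} = \limsup_j \frac{\log O(k_1+\cdots+k_j)}{k_j\log 2}=0$ provided $k_j$ grows fast enough. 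A cleaner bookkeeping alternative, which I would probably adopt in the write-up, is: the translates needed at scale $2^{-k}$ form a set of size polynomial in $k$, so choosing a single sparse sequence of scales $2^{-k_j}$ and taking $E$ to be a compact set which, at scale $2^{-k_j}$, consists of the $j$-th batch of translates, we get $N(E,2^{-k_j})\le (k_1\cdots)$ polynomially bounded, forcing $\ubdim E=0$.

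For part (2), I would reduce to part (1) via a projection/slicing argument. Suppose $A\su\R^n$ is Borel with $\hdim A>s$. I want a compact $E$ with $\ubdim E\le n-s$ and $A+E$ having interior. By Frostman's lemma there is a compact subset of $A$ (still call it $A$) carrying a measure $\mu$ with $\mu(B(x,r))\le r^s$. Choose an $s$-dimensional coordinate subspace $V$ (or a generic subspace, via a transversality argument) such that the pushforward of $\mu$ under the orthogonal projection $\pr_V$ has a positive-measure ``shadow'', or more precisely such that $\pr_V(A)$ has positive $\iH^s$-measure and, on a positive-measure set of $v\in V$, the fiber $A\cap \pr_V^{-1}(v)$ is non-empty; equivalently, work inside a box $Q=Q_V\times Q_W$ and note $A$ projects onto a positive-measure subset $A'$ of the $s$-dimensional face $Q_V$. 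Applying part (1) in $\R^s$ to $A'$ gives a zero-dimensional compact $E'\su V$ with $A'+E'\supseteq$ an open subset of $V$. Now set $E = E' \times Q_W$ (a zero-dimensional times an $(n-s)$-dimensional compact set, so $\ubdim E = \ubdim E' + \dim_M Q_W = n-s$); then $A+E \supseteq (A'+E')\times(\text{something covering }Q_W)$ has non-empty interior in $\R^n$ because $A$, projecting onto $A'$ and being contained in $Q$, has the property that $A + (\{0\}\times Q_W)$ covers $Q_V'\times Q_W$ over the projection of $A$, and then adding $E'\times\{0\}$ fills out the $V$-direction.

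The main obstacle is the probabilistic covering lemma in part (1): getting the union bound to close requires that after translating, the ``uncovered fraction'' of each fixed small cube decays geometrically in the number of translates, which is true because each new random translate is, on a cube of positive measure, independent and covers a fixed proportion — but one must set this up carefully on the torus and transfer back to $\R^n$, and one must control the iteration over scales so that the cardinalities stay sub-exponential in the reciprocal scale, which is exactly what forces $\ubdim E=0$. Everything else (Frostman reduction, the product construction for part (2), the dimension arithmetic for Cartesian products) is routine. I would also need the elementary fact that $\ubdim(E'\times F)\le \ubdim E' + \ubdim F$ with equality when one factor is, say, a cube — this is standard.
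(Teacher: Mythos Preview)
Your reduction for part~(2) has a genuine gap: $s$ is an arbitrary real number in $[0,n]$, not necessarily an integer, so there is no ``$s$-dimensional coordinate subspace $V$'' to project onto, and the product construction $E = E' \times Q_W$ with $Q_W$ an $(n-s)$-dimensional cube does not make sense. Even for integer $s$ your fiber argument would need more care (a measurable selection of one point per fiber, not just non-emptiness of fibers), but the non-integer case is the real obstruction. The paper's reduction is different and works for all $s$: choose a compact self-similar set $B_1$ with $\hdim B_1 = \ubdim B_1 = n-s$, note that $\hdim(A\times B_1) > n$, and apply the Marstrand--Mattila projection theorem in $\R^{2n}$ to obtain an affine copy $B$ of $B_1$ with $\lambda(A+B) > 0$; then apply part~(1) to the positive-measure set $A+B$ to get a zero-dimensional compact $E_1$, and set $E = B + E_1$.

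Your sketch of part~(1) has the right probabilistic flavor (Lorentz--Ruzsa covering) but the iteration does not close as written. First, reducing to ``$A$ is a finite union of dyadic cubes'' trivializes the problem (such an $A$ already has interior, take $E=\{0\}$); what you presumably mean is to work, at each scale $2^{-k}$, with the cubes in which $A$ has non-negligible density. For the covering to propagate through \emph{all} scales you then need $A$ to have density at least some $\eps_k$ in \emph{every} cube of side $r_k$ it meets, with $\sum\eps_k<\infty$; this is not automatic from positive measure (consider a fat Cantor set) and requires a separate pruning step, which the paper isolates as Lemma~\ref{maximal}. Second, taking $E=\{0\}\cup E_1\cup E_2\cup\cdots$ as a \emph{union} of finite sets gives $A+E=\bigcup_j(A+E_j)$, and $A+E_j$ being $2^{-k_j}$-dense says nothing at scale $2^{-k_{j+1}}$, so no fixed open set survives. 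What is needed is a \emph{nested} Cantor-type random set in which the level-$(k{+}1)$ cubes are random sub-cubes of the level-$k$ cubes; the paper shows that, for slightly shrunk approximants $E'_k$, one has $A+E'_k\subset A+E'_{k+1}$ with probability at least $1-2^{-k}$ (the union bound you describe is applied \emph{inside each} level-$k$ cube, and succeeds precisely because the pruning lemma controls the local density of $A$ there), and a Kolmogorov zero--one argument then gives the conclusion almost surely.
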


To prove (1), a universal random set $E$ will be constructed
such that for any fixed set $A$ the random set $E$ has
all the required properties, almost surely.
We will see that (2) follows from (1) by techniques that are now fairly standard.

This second part of Lemma~\ref{topsecret} motivates a spin-off project, unrelated to the Kakeya conjecture. 
The following two theorems show a striking duality between Hausdorff and packing dimensions via additive complements.

\begin{theorem}\label{t:addcompl1}
For every non-empty Borel set $A\subset \R^n$,
\begin{equation}\label{e:dimH}
\begin{split}
\dim_H A & = n - \inf\{\ubdim B \,:\, 
           B \subset \R^n \textrm{ is compact},\  
                          \inte(A+B)\neq\emptyset\}\\
 &  = n - \inf\{\dim_P B \,:\, 
           B \subset \R^n \textrm{ is compact},\  
                          \inte(A+B)\neq\emptyset\}\\     
 & = n - \inf\{\dim_P B \,:\, 
           B \subset \R^n \textrm{ is Borel},\ A+B=\R^n\}.   
\end{split}
\end{equation}
\end{theorem}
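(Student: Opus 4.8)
The plan is to abbreviate the three infima on the right-hand side of \eqref{e:dimH} (in the order written) as
\begin{gather*}
I_1=\inf\{\ubdim B: B\subset\R^n \text{ compact},\ \inte(A+B)\neq\emptyset\},\\
I_2=\inf\{\dim_P B: B\subset\R^n \text{ compact},\ \inte(A+B)\neq\emptyset\},\\
I_3=\inf\{\dim_P B: B\subset\R^n \text{ Borel},\ A+B=\R^n\},
\end{gather*}
and to establish the chain
\[
n-\dim_H A\ \le\ I_3\ \le\ I_2\ \le\ I_1\ \le\ n-\dim_H A .
\]
This forces $I_1=I_2=I_3=n-\dim_H A$, which is exactly the three asserted identities after rearranging.

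The two inner inclusions $I_3\le I_2\le I_1$ are the routine part. Since $\dim_P B\le\ubdim B$ for every bounded set $B$, in particular every compact one, any competitor for $I_1$ is a competitor for $I_2$ with value no larger, so $I_2\le I_1$. For $I_3\le I_2$, let $B$ be compact with $\inte(A+B)\neq\emptyset$, so $A+B$ contains an open ball of some radius $r>0$; choose a countable set $L\subset\R^n$ (a sufficiently fine cubic lattice) whose translates of that ball cover $\R^n$, and put $B'=\bigcup_{v\in L}(B+v)$. Then $B'$ is $F_\sigma$, hence Borel, $A+B'=\R^n$, and $\dim_P B'=\dim_P B$ by the countable stability and translation invariance of packing dimension; hence $I_3\le\dim_P B$, and taking the infimum over $B$ gives $I_3\le I_2$.

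For the upper bound $I_1\le n-\dim_H A$: if $\dim_H A=0$ this is trivial, since any closed ball $B$ has $\ubdim B=n$ and $\inte(A+B)\neq\emptyset$ (it contains a translate of $B$). If $\dim_H A>0$, fix $s$ with $0<s<\dim_H A$ and apply Lemma~\ref{topsecret}(2): there is a compact $E$ with $\ubdim E\le n-s$ and $\inte(A+E)\neq\emptyset$, so $I_1\le n-s$; letting $s\uparrow\dim_H A$ gives the bound. For the remaining inequality $I_3\ge n-\dim_H A$, let $B\subset\R^n$ be Borel with $A+B=\R^n$. The addition map $(a,b)\mapsto a+b$ from $\R^n\times\R^n$ to $\R^n$ is Lipschitz, hence
\[
n=\dim_H\R^n=\dim_H(A+B)\le\dim_H(A\times B)\le\dim_H A+\dim_P B,
\]
the last step being the classical product inequality $\dim_H(X\times Y)\le\dim_H X+\dim_P Y$. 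Thus $\dim_P B\ge n-\dim_H A$, and the infimum over such $B$ gives $I_3\ge n-\dim_H A$. Combining the four displayed inequalities completes the proof.

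The genuine content is entirely contained in Lemma~\ref{topsecret}(2) (the random covering construction), which this argument takes as a black box; granting it, the only analytic input is the product-dimension inequality, and the rest is bookkeeping. The one step that calls for a moment's care is $I_3\le I_2$ — namely checking that the lattice union $B'$ is Borel and has the same packing dimension as $B$ — and this is standard.
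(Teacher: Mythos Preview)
Your proof is correct and follows essentially the same route as the paper's: both establish the chain $n-\dim_H A\le I_3\le I_2\le I_1\le n-\dim_H A$, invoking the product inequality $\dim_H(A\times B)\le\dim_H A+\dim_P B$ for the first step, Lemma~\ref{topsecret}(2) for the last, and the standard facts $\dim_P\le\ubdim$ and countable stability of packing dimension for the two inner inequalities. Your version is slightly more explicit (naming the infima, handling the case $\dim_H A=0$ separately), but the logic is identical.
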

\begin{theorem}\label{t:addcompl2}
For any non-empty Borel set $A\subset \R^n$,
\begin{equation}\label{e:dimP}
\begin{split}
\dim_P A  
 &  = n - \inf\{\hdim B : 
           B \subset \R^n \textrm{ compact},\  
                          \inte(A+B)\neq\emptyset\}\\     
 & = n - \inf\{\hdim B : 
           B \subset \R^n \textrm{ Borel},\ A+B=\R^n\}.   
\end{split}
\end{equation}
\end{theorem}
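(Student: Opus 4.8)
The strategy mirrors the proof of Theorem~\ref{t:addcompl1}, but with the roles of Hausdorff and packing dimension interchanged, and using the ``packing-dimension-is-stabilized-Minkowski-dimension'' characterization $\dim_P A = \inf\{\sup_i \ubdim A_i : A = \bigcup_i A_i\}$ as the bridge. There are two directions: the lower bound $\dim_P A \ge n - \inf\{\hdim B : \dots\}$ (equivalently, for any Borel $B$ with $\inte(A+B)\ne\emptyset$ one has $\hdim B \ge n - \dim_P A$), and the upper bound $\dim_P A \le n - \inf\{\hdim B:\dots\}$ (equivalently, producing for each $\eps>0$ a \emph{compact} $B$ with $\hdim B \le n-\dim_P A+\eps$ and $\inte(A+B)\ne\emptyset$), after which the second line (with $A+B=\R^n$) follows from the first by the standard trick: cover $\R^n$ by countably many translates of a translate of the compact witness, note $\hdim$ is countably stable, and conversely any $B$ with $A+B=\R^n$ has $\inte(A+B)\ne\emptyset$ trivially.

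For the \textbf{upper bound}, write $A = \bigcup_k A_k$ with $\sup_k \ubdim A_k$ as close as we like to $\dim_P A$; we may take the $A_k$ increasing and (passing to closures, which does not increase upper Minkowski dimension and only enlarges $A+B$) compact. Fix $k$ and let $s = \ubdim A_k < n - \hdim(\text{target}) $... more precisely we want, for $t > \ubdim A_k$ close to $\dim_P A$, a compact $E_k$ with $\ubdim E_k \le n-t$... wait, this is where care is needed: Lemma~\ref{topsecret}(2) takes a set of \emph{large Hausdorff dimension} and produces a small-Minkowski complement, which is exactly the content feeding Theorem~\ref{t:addcompl1}, not this one. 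For Theorem~\ref{t:addcompl2} we instead need the ``dual'' input: a set of small \emph{upper Minkowski} dimension $s$ admits a compact complement $E$ with $\hdim E \le n - s$ (indeed $\ubdim E \le n-s$ suffices, since $\hdim\le\ubdim$) such that $A+E$ has nonempty interior. This is a soft fact: if $\ubdim A_k \le s$ then for every $\eps$ the set $A_k$ is covered by $\le r^{-(s+\eps)}$ balls of radius $r$ along a sequence $r\to 0$, and a standard ``grid/self-similar Cantor'' complement of dimension $\le n-s$ fills a cube — one builds $E$ with $\ubdim E \le n-s+\eps$ as a Cantor-type set whose scales are slaved to the good scales of $A_k$, arranging $A_k + E \supseteq$ a fixed cube. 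Then take $E = \prod$-type union / or rather set $B = $ a single compact set handling all $A_k$ simultaneously by a nested construction (as in Theorem~\ref{t:addcompl1}'s proof), getting $\hdim B \le n - \dim_P A + \eps$ and $\inte(A+B)\supseteq \inte(A_k+B)\ne\emptyset$.

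For the \textbf{lower bound}, suppose $B$ is Borel with $\inte(A+B)\ne\emptyset$; we must show $\hdim B \ge n - \dim_P A$. Assume $\hdim B < n - \dim_P A$; pick $u$ with $\hdim B < u < n - \dim_P A$, so $\dim_P A < n - u$. By the characterization of packing dimension, $A = \bigcup_k A_k$ with $\ubdim A_k < n-u$ for all $k$. Now apply (a relative/complementary version of) the area-Fubini argument: since $\inte(A+B)\ne\emptyset$ and $A=\bigcup A_k$, Baire category gives some $k$ and some open cube $Q$ with $Q\subseteq \cl(A_k)+B$, hence $Q \subseteq \cl A_k + B$ with $\ubdim \cl A_k = \ubdim A_k < n-u$. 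But a set of the form $(\text{Minkowski dim} < n-u) + (\hdim < u)$ cannot contain a cube — this is the standard dimension-of-sumsets bound $\ubdim(X+Y) \le \ubdim X + \hdim Y$ (or more safely $\hdim(X+Y) \le \ubdim X + \hdim Y$, which still gives $\hdim Q \le (n-u) + u$ too weak; so one uses instead: a Borel set $Y$ with $\hdim Y < u$ is a countable union of sets each of finite $\mathcal{H}^{u}$-ish content, and $X+Y$ with $\ubdim X < n-u$ then has $\ubdim(X+Y)<n$, contradiction). This is exactly the dual of the computation in Theorem~\ref{t:addcompl1} and can be quoted from there.

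\textbf{Main obstacle.} The delicate point is the upper bound: constructing a \emph{single} compact $B$ that simultaneously serves as an additive complement (up to interior) for \emph{all} pieces $A_k$ of an optimal packing-dimension decomposition, while keeping $\hdim B$ (not merely $\ubdim B$ — the $A_k$ are only controlled in Minkowski dimension along sparse scales) as small as $n - \dim_P A + \eps$. One must interleave the scales: devote scale-windows $[\delta_{j+1},\delta_j]$ alternately to covering $A_1$, then $A_1$ and $A_2$, etc., choosing within each window the good (Minkowski-efficient) scales of the relevant $A_k$, and build $B$ as a decreasing intersection of finite unions of cubes so that on each window $B$ looks like the $(n-\ubdim A_k)$-dimensional grid-complement of $A_k$. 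The verification that (i) $A_k + B$ contains a fixed cube for every $k$, and (ii) $\ubdim B = \limsup$ over all scales stays $\le n-\dim_P A+\eps$ (the ``bad'' transition windows must be made to have vanishing relative length), is the technical heart. This is structurally parallel to the corresponding construction in the proof of Theorem~\ref{t:addcompl1}, so in practice one would set it up once in a lemma and invoke it in the dual form here.
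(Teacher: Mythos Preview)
Your lower bound is salvageable but overcomplicated. The paper's argument is one line: if $\inte(A+B)\ne\emptyset$ then $n=\hdim(A+B)\le\hdim(A\times B)\le \dim_P A+\hdim B$, by the same product inequality quoted in the proof of Theorem~\ref{t:addcompl1} with the roles of the two factors swapped. Your decomposition-plus-Baire route has a gap (an analytic set being comeager in an open set does not make it contain that set), and what actually rescues it is your own final parenthetical $\hdim(A_k+B)\le\ubdim A_k+\hdim B<n$ for each $k$, hence $A+B=\bigcup_k(A_k+B)$ is Lebesgue null --- but that is just the product inequality done piecewise.

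The upper bound, however, is genuinely wrong in direction. You invoke the characterization $\dim_P A=\inf\{\sup_k\ubdim A_k\}$ to decompose $A=\bigcup_k A_k$ with $\ubdim A_k$ close to $\dim_P A$, and then claim the ``soft fact'' that $\ubdim A_k\le s$ yields a complement $E$ with $\ubdim E\le n-s+\eps$ and $A_k+E\supset\text{cube}$. This is false: $\ubdim A_k\le s$ says $A_k$ is covered by at most $r^{-(s+\eps)}$ balls at scale $r$, so to fill the $r^{-n}$ grid cubes of a unit cube one needs \emph{at least} $r^{-(n-s-\eps)}$ translates, which forces $\ubdim E\ge n-s$, not $\le n-s$. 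Concretely, $A_k$ could be concentrated in a tiny ball and still have $\ubdim A_k=s$; then any complement with $A_k+E\supset\text{cube}$ must itself contain a cube. The decomposition gives you \emph{upper} bounds on the richness of the pieces, whereas building a small complement requires \emph{lower} bounds. The paper supplies exactly this missing ingredient via Theorem~\ref{t:addpacking}: from $\dim_P A>s$ one extracts (Joyce--Preiss plus a localization) a compact $A''\subset A$ with $\ubdim(A''\cap Q)>s$ for \emph{every} open cube $Q$ meeting $A''$, and then a tree construction driven by a Lorentz/Ruzsa covering lemma at each scale produces a compact $B$ with zero $(n-s)$-dimensional Hausdorff measure and $A''+B\supset[1,2]^n$. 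That construction, not an interleaving of good Minkowski scales of the $A_k$, is the substance of the proof.
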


The first of these, Theorem~\ref{t:addcompl1}, follows from Lemma~\ref{topsecret}.
\begin{proof}[Proof of Theorem~\ref{t:addcompl1}]
It is well known (see e.g.~in \cite{Ma}) that for
any Borel sets $U$ and $V$ we have 
$\dim_H(U\times V)\le \dim_H(U) + \dim_P (V)$.
Since $A+B$ is a projection of $A\times B$, 
this implies that whenever $A$ and $B$ are Borel sets
and $A+B=\R^n$ then $\dim_H(A)+\dim_P(B)\ge n$. 
Therefore the last expression is at most $\hdim A$,
so it remains to prove ``$\le$'' instead of the 
equalities. The first one clearly follows from
Lemma~\ref{topsecret} (2), the second from
the well-known fact that 
packing dimension is always less than or equal to the
upper Minkowski dimension,
and the third one from the fact that 
$\R^n$ can be covered by countably many translates
of any set with non-empty interior.
\end{proof}




The proof of Theorem~\ref{t:addcompl2} is nearly identical, but the reference to Lemma~\ref{topsecret} (2) has to be replaced by a reference to the following theorem.

\begin{theorem}\label{t:addpacking}
For any $A\su\R^n$ Borel (or analytic) set with
$\dim_P A>s$ there exists a compact set $B\su \R^n$ 
such that $B$ has zero $(n-s)$-dimensional Hausdorff measure
and $A+B$ has non-empty interior.
\end{theorem}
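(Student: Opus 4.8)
The idea is to reduce the packing-dimension statement to the Hausdorff-dimension statement already available in Lemma~\ref{topsecret}~(2), exploiting the well-known characterization of packing dimension in terms of Hausdorff dimension of subsets. Recall that for a Borel (or analytic) set $A\su\R^n$ one has
\[
\dim_P A=\sup\Bigl\{\,s'\;:\;\text{every relatively open nonempty }U\su A\text{ has }\dim_H U\ge s'\,\Bigr\},
\]
or more precisely, if $\dim_P A>s$ then there is a nonempty (relatively) open subset $U$ of $A$ — equivalently, a set of the form $A\cap G$ with $G\su\R^n$ open — such that every nonempty relatively open subset of $U$ (in particular every $A\cap G'$ with $G'$ open, $A\cap G'\neq\emptyset$, $G'\su G$) has Hausdorff dimension $>s$. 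This is the standard fact that packing dimension equals the ``modified upper box dimension'' and can be localized: the set where $A$ is ``uniformly $s$-thick'' is nonempty.

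First I would fix such an open set $G_0$ with $A_0:=A\cap G_0\neq\emptyset$ and $\dim_H(A\cap G)>s$ for every open $G\su G_0$ meeting $A$. Next, cover $G_0$ by countably many open balls $G_0=\bigcup_k B_k$ with $\overline{B_k}\su G_0$; then $A_0=\bigcup_k (A\cap B_k)$, and each piece $A_k:=A\cap \overline{B_k}$ that is nonempty has $\dim_H A_k>s$. Apply Lemma~\ref{topsecret}~(2) to each such $A_k$ (it is Borel, nonempty, of Hausdorff dimension $>s$) to get a compact set $E_k$ of upper Minkowski dimension $n-s$ with $\inte(A_k+E_k)\neq\emptyset$. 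The naive attempt would be to take $B=\bigcup_k E_k$, but a countable union of sets of upper Minkowski dimension $n-s$ need not have upper Minkowski dimension $n-s$, and need not even be bounded; this is exactly the point where one must be careful and where the Hausdorff-measure conclusion (rather than a Minkowski-dimension conclusion) becomes essential.

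The fix is to rescale and translate. For each $k$, pick a point $a_k\in A_k+E_k$ together with an open ball around it inside $A_k+E_k$; choose a small scaling factor $\rho_k>0$ and set $E_k':=\rho_k(E_k - e_k)$ for a suitable $e_k\in E_k$, so that $E_k'$ is a compact set contained in a ball of radius $\delta_k$ around the origin, with $\delta_k\to 0$ fast. Scaling by $\rho_k$ multiplies the $(n-s)$-dimensional Hausdorff measure (indeed the $(n-s)$-dimensional Hausdorff premeasure at every scale) by $\rho_k^{\,n-s}$; since $E_k$ has $\sigma$-finite, in fact finite, $(n-s)$-dimensional content on bounded sets (its upper Minkowski dimension is $n-s$, so for any $\eps>0$ its $(n-s+\eps)$-Hausdorff measure is $0$, and by choosing $\rho_k$ small we can also force $\iH^{n-s}_{\delta_k}(E_k')<2^{-k}$). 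Then $B_0:=\{0\}\cup\bigcup_k E_k'$ is compact (the pieces shrink to the origin) and has $\iH^{n-s}(B_0)=0$: given $\eta>0$, finitely many $E_k'$ carry all but $\eta$-worth of content by the estimate $\sum_k 2^{-k}<\infty$, and each of those finitely many compact zero-$\iH^{n-s}$-measure sets is covered efficiently. Meanwhile $A_k+\rho_k^{-1}E_k'+e_k=A_k+E_k$ has nonempty interior, so $A_k+ E_k'$ has nonempty interior after applying the inverse similarity; translating, the set $A+B$ with $B$ an appropriate union of translated copies $-e_k+E_k'+(\text{correction})$ of the $E_k'$ — arranged so that the $k$-th affine copy of $E_k'$ is the one that produces interior against $A_k$ — will have nonempty interior, while still having zero $(n-s)$-dimensional Hausdorff measure by the same shrinking argument. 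Concretely: it suffices to produce a single compact $B$ with $\iH^{n-s}(B)=0$ and $\inte(A+B)\neq\emptyset$; by the localization we only need $\inte(A_{k_0}+B)\neq\emptyset$ for one $k_0$, so in fact we may simply take $B=E_{k_0}$ after observing that Lemma~\ref{topsecret}~(2) can be upgraded: redo its proof (or the proof of Lemma~\ref{topsecret0}) tracking Hausdorff measure instead of Minkowski dimension, which is where the real content lies.

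The main obstacle, therefore, is not the packing-to-Hausdorff reduction (which is soft and standard) but sharpening Lemma~\ref{topsecret}~(2) from ``$\ubdim E=n-s$'' to ``$\iH^{n-s}(E)=0$''. I expect this requires revisiting the random covering construction behind Lemma~\ref{topsecret0}/\ref{topsecret}: one builds $E$ as a random Cantor-type set at scales $\delta_1\gg\delta_2\gg\cdots$, and the relevant quantity to control is not merely the number of $\delta_j$-cubes meeting $E$ (which gives Minkowski dimension) but the sum of their $(n-s)$-th powers of sidelengths along the construction (which gives Hausdorff measure). The covering lemmas for finite sets used as input are already ``efficient'' enough that, with the scales chosen to grow quickly, the expected value of $\sum (\text{sidelength})^{n-s}$ over the surviving cubes tends to $0$; then Borel–Cantelli or a first-moment argument yields a realization with $\iH^{n-s}(E)=0$ while still $A+E$ has nonempty interior almost surely. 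Packaging this carefully — in particular, keeping the ``$A+E$ has interior'' event and the ``$\iH^{n-s}(E)=0$'' event simultaneously of full/positive probability — is the technical heart of the argument.
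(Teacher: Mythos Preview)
Your proposal contains a genuine error at the very first step. The ``recall'' that
\[
\dim_P A=\sup\bigl\{s':\text{every nonempty relatively open }U\su A\text{ has }\dim_H U\ge s'\bigr\}
\]
is false: packing dimension is \emph{not} characterized by the Hausdorff dimension of relatively open subsets. A Cantor-type set $A\su[0,1]$ with $\dim_H A=0$ and $\dim_P A=1$ already shows this, since every relatively open $U\su A$ then has $\dim_H U=0$. The correct localization replaces $\dim_H$ by $\ubdim$ (or $\dim_P$): from $\dim_P A>s$ one can extract, after Joyce--Preiss and removing a small-packing-dimension open part, a nonempty compact $A''\su A$ with $\ubdim(A''\cap Q)>s$ for every open cube $Q$ meeting $A''$. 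But then Lemma~\ref{topsecret}~(2), which needs $\dim_H>s$, simply does not apply to $A''$, and your whole reduction collapses. (This is the very asymmetry between Hausdorff and packing dimension that makes Theorem~\ref{t:addpacking} a separate theorem rather than a corollary of Lemma~\ref{topsecret}.)

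The paper proceeds quite differently. It performs exactly the localization above (to a compact $A''$ with uniformly large \emph{upper box} dimension in every neighbourhood) and then builds $B$ by a direct, deterministic tree construction: at each stage one subdivides a cube into $m^n$ subcubes, at least $m^{s'}$ of whose interiors meet $A''$, and the Lorentz/Ruzsa covering lemma (Lemma~\ref{l:Lorentz}) produces a complementary set of $\lesssim m^{n-s'}\log m$ translated subcubes whose sum with the first family covers the original cube. Iterating and summing diameters to the power $n-s$ gives $\iH^{n-s}(B)=0$ immediately, while the nested covering property gives $A''+B\supset[1,2]^n$. No appeal to the random construction of Section~\ref{s:keylemma} is made. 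Your closing suggestion---to revisit the random construction and upgrade from $\ubdim=n-s$ to $\iH^{n-s}=0$---might be salvageable as an alternative route, but it would still have to start from the correct (box-dimension) localization and would then essentially have to reproduce the covering efficiency that the Lorentz lemma already delivers cleanly.
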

This theorem will be proved in Section~\ref{s:packing}.

Theorem~\ref{t:addcompl2} is not without history.
The identity \eqref{e:dimP} is closely related to the following identity, which was proved independently
by Bishop and Peres \cite{BP} and by Y.~Xiao \cite{X}.

\begin{theorem}[Bishop--Peres, Xiao (1996)]\label{BPX}
For any non-empty Borel set $A\su\R^n$,
$$
\dim_P(A)=\sup\{\hdim(A\times B)-\hdim B :
B \subset \R^n \textrm{ compact} \}.
$$
\end{theorem}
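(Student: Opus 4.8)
The plan is to imitate the random multi-scale construction behind Lemma~\ref{topsecret}(1), after first replacing $A$ by a compact subset whose packing dimension is ``seen in every portion''. Fix $s'$ with $s<s'<\dim_P A$. Since the packing dimension of an analytic set is the supremum of the packing dimensions of its compact subsets (a classical fact; it also follows from Theorem~\ref{BPX} by projecting compact subsets of $A\times C$ to $\RR^n$), there is a compact $K_0\su A$ with $\dim_P K_0>s'$. Let $V$ be the union of all open $U$ with $\dim_P(K_0\cap U)\le s'$; since $\RR^n$ is Lindel\"of and $\dim_P$ is countably stable, $\dim_P(K_0\cap V)\le s'$, so $K\defin K_0\sm V$ is a non-empty compact subset of $A$. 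For every open $U$ meeting $K$ we have $U\not\su V$, hence $\dim_P(K_0\cap U)>s'$; since $\dim_P(K_0\cap U\cap V)\le s'$, finite stability of $\dim_P$ gives $\dim_P(K\cap U)>s'$, so $\ubdim(K\cap U)>s'$. It now suffices to find a compact $B$ with $\mathcal H^{n-s}(B)=0$ and $\inte(K+B)\ne\emptyset$.

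Next I would construct $B=\bigcap_j B_j$ as a nested sequence of finite unions of cubes in a fixed box, maintaining the invariant that $K+B_j$ covers $[0,1]^n$ (so $K+B\supseteq[0,1]^n$ by compactness). Concretely, alongside $B_j$ I keep a finite family of pairs of equal-sized cubes $(P,P')$ with $P\cap K\ne\emptyset$, $P'\su B_j$, whose sums $P+P'$ cover $[0,1]^n$. To refine a pair: using $\ubdim(K\cap P)>s'$, choose a scale $\rho$ --- which may differ from cell to cell, so the refinement is \emph{adaptive} --- at which $K\cap P$ meets at least $(\diam P/\rho)^{s'}$ cubes of side $\rho$ and at which the ratio $R\defin\diam P/\rho$ exceeds a fixed large constant $R_0$; then retain in $B_{j+1}$ a \emph{uniformly random} choice of about $R^{n-s'}\log R$ of the side-$\rho$ subcubes of $P'$. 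The standard fact that, for a fixed $S$ and a uniformly random $T$ in $(\ZZ/N\ZZ)^n$ with $|S|\,|T|\gg N^n\log N$, one has $S+T=(\ZZ/N\ZZ)^n$ with overwhelming probability --- applied in each cell with $N=R$, with $S$ the $\ge R^{s'}$ cubes of $K$ and $T$ the retained cubes of $B$ --- shows the covering invariant is preserved whp; the slack $s'>s$ is exactly what leaves polynomial room for the $\log$ factor. Iterating over all cells and stages and choosing one realisation (the failure probabilities can be made summable, so Borel--Cantelli applies) produces $B$ with $K+B\supseteq[0,1]^n$, hence $\inte(A+B)\ne\emptyset$.

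For the size of $B$: the Cantor measure $\mu$ on $B$ obtained by splitting mass equally among children satisfies $\mu(P)\ge(\diam P)^{n-s'}/\prod_i\log R_i$, the product over the ratios $R_1,R_2,\dots$ along the branch to $P$. Since each $R_i\ge R_0$, choosing $R_0$ so large that $R^{-(s'-s)}\log R\le\theta<1$ for all $R\ge R_0$ gives $(\diam P)^{n-s}=(\diam P)^{n-s'}(\diam P)^{s'-s}\le\mu(P)\prod_i(R_i^{-(s'-s)}\log R_i)\le\mu(P)\,\theta^{\operatorname{depth}(P)}$. For any $\delta>0$ the antichain of construction cubes of side $\le\delta$ covers $B$ and has all members at depth $\ge k_\delta$, with $k_\delta\to\infty$ as $\delta\to0$; hence $\sum(\diam P)^{n-s}\le\theta^{k_\delta}\mu(B)\to0$, so $\mathcal H^{n-s}(B)=0$. (Because the scales are unsynchronised across branches, $\ubdim B$ may well be large, but that is irrelevant: $\mathcal H^{n-s}(B)$ is witnessed by the covers at the refinement scales alone.)

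The reduction in the first step and the measure estimate in the third are routine; the substantive part --- and the place where I expect the difficulty to lie --- is the middle step: organising the adaptive choice of scales so that the local covering lemma can be run simultaneously in all the (infinitely many, nested) cells at every stage, with the count of retained cubes of $B$ pinned, up to a logarithm, at $R^{n-s'}$, and so that the resulting countable family of ``bad'' events still admits a realisation avoiding all of them.
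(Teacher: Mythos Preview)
What you have written is a proof of Theorem~\ref{t:addpacking} rather than of the stated Theorem~\ref{BPX}: you start from an undeclared $s<\dim_P A$ and construct a compact $B$ with $\mathcal H^{n-s}(B)=0$ and $\inte(A+B)\ne\emptyset$, and then stop. This is exactly the route the paper takes to Theorem~\ref{BPX} (through Theorem~\ref{t:addcompl2}), so you are attacking the right hard step, but the two missing lines are: $\inte(A+B)\ne\emptyset$ gives $\hdim(A\times B)\ge n$, hence $\hdim(A\times B)-\hdim B\ge n-(n-s)=s$; the reverse inequality is the product bound $\hdim(A\times B)\le\hdim B+\dim_P A$. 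Note also that your parenthetical appeal to Theorem~\ref{BPX} in the first paragraph is circular as written; the ``classical fact'' you invoke is the Joyce--Preiss theorem, which is what the paper uses.

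As a proof of Theorem~\ref{t:addpacking}, your argument coincides with the paper's (Section~\ref{s:packing}) in every essential respect: the same reduction to a compact $K$ with $\ubdim(K\cap U)>s'$ for every open $U$ meeting $K$; then an adaptive, cell-by-cell nested construction in which, in each cell, one finds a scale at which $K$ meets $\ge R^{s'}$ subcubes and retains $\approx R^{n-s'}\log R$ subcubes on the $B$ side via the Lorentz/Ruzsa covering lemma (Lemmas~\ref{l:Ruzsa}--\ref{l:Lorentz}); the slack $s'>s$ absorbs the logarithm and forces $\mathcal H^{n-s}(B)=0$. The only differences are packaging: the paper derandomises at each step (Ruzsa's lemma gives existence of a good $V$, which is then fixed), whereas you keep the choices random and appeal to Borel--Cantelli, and the paper bounds $\sum_J(\diam J)^{n-s}$ directly (property~\eqref{p:sum}) rather than through a Cantor measure. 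One technical wrinkle you glossed over: with \emph{equal-sized} pairs $(P,P')$ the sums of subcubes cannot cover all of $P+P'$ (a boundary effect when passing from $(\Z/N\Z)^n$ to $\Z^n$); the paper handles this by taking the $B$-cube of twice the sidelength of the $A$-cube and only maintaining coverage of the middle third $\tfrac13(I+J)$ --- see properties~\eqref{p:double} and~\eqref{p:cover} in Lemma~\ref{l:zeroHausdorff}.
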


Note that Theorem~\ref{t:addcompl2} implies
the Bishop--Peres--Xiao theorem. So we actually obtain a new (and in fact much shorter) proof of Theorem~\ref{BPX}.

\begin{remark}
One might wonder whether infimum can be replaced by
minimum in Theorems~\ref{t:addcompl1} and \ref{t:addcompl2}.
We claim that in general the answer is negative for both.

For Theorem~\ref{t:addcompl1} this is witnessed by a set
$A=\cup_{n=1}^\infty A_n$, 
where each $A_n$ is a Borel set with
$\hdim A_n<\hdim A$. Indeed, by the product theorem we used before, for any Borel $B$ with $\dim_P B=n-\hdim A$ 
we have $\dim(A_n+B)<n$, so $A+B=\cup_{n=1}^\infty A_n +B$
is a set of measure zero, so it cannot have non-empty
interior.

By switching $\hdim$ and $\dim_P$ we get a construction for 
Theorem~\ref{t:addcompl2} as well.
\end{remark}

\subsection{Structure of the paper}
The paper is organised as follows.
In Section~\ref{s:Kakeya} we prove Theorem~\ref{t:mostgeneral} using Lemma~\ref{topsecret}.
We prove Theorem~\ref{t:addpacking} in 
Section~\ref{s:packing}, and Lemma~\ref{topsecret} in
Section~\ref{s:keylemma}. 

\subsection{Fractal percolation and non-empty interior of sumsets.} After the initial publication of this work, Pablo Shmerkin informed us that part (2) of Lemma~\ref{topsecret} follows from his results with Ville Suomala on spatially independent martingales and fractal percolation \cite{SV}. Indeed, part (2) of Lemma~\ref{topsecret} is a very special case of \cite[Theorem 13.1]{SV} (modulo an unimportant technical condition). On the other hand, the first half of Lemma~\ref{topsecret} (that is, Lemma~\ref{topsecret0}) does not seem to follow from their results.

In Section~\ref{s:keylemma} we reduce part (2) of Lemma~\ref{topsecret} to part (1) in five sentences, and then prove part (1).
Our direct proof of Lemma~\ref{topsecret} is much shorter than the proof of the very general result of Shmerkin--Suomala \cite{SV}.

\section{Proof of Theorem~\ref{t:mostgeneral}
using Lemma~\ref{topsecret}}
\label{s:Kakeya}

\subsection{Baire category argument in the compact case}

Our first goal is proving the following.

\begin{lemma}\label{newtyp}
For any non-empty compact set of
directions $E\su\RP^{n-1}$ the following hold:

(1) Among those compact subsets of $\R^n$ that contain 
a unit line segment in every direction in $E$
there exists one, $B_{\min}$,
with minimal Hausdorff dimension.

(2) There exists a closed set $F\su\R^n$ that
contains a line in every direction in $E$ 
and $\hdim F=\hdim B_{\min}$.
\end{lemma}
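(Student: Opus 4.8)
\textbf{Proof strategy for Lemma~\ref{newtyp}.}

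The plan is to set up a complete metric space of compact sets so that containing a line segment (or a line) in a given direction becomes (close to) a closed condition, and then extract a minimizer via a Baire-category argument. First I would fix a large closed ball, say $\overline{B}(0,R)\su\R^n$ with $R$ large enough that any unit segment through the unit ball of directions, suitably positioned, fits inside. Consider the space $\K$ of non-empty compact subsets of $\overline{B}(0,R)$ with the Hausdorff metric; this is a compact (in particular complete) metric space. For each $e\in E\su\RP^{n-1}$, let $\mathcal{S}_e\su\K$ be the collection of those $K$ that contain a unit line segment in direction $e$. The key point is that $\mathcal{S}_e$ is \emph{closed} in $\K$: if $K_j\to K$ in Hausdorff metric and $K_j$ contains a unit segment $I_j$ in direction $e$, then by compactness of the space of unit segments in $\overline{B}(0,R)$ (parametrised by their midpoints) a subsequence of the $I_j$ converges to a unit segment $I$ in direction $e$, and $I\su K$ since Hausdorff convergence forces limit points of points in $K_j$ to lie in $K$. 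Hence $\mathcal{S}_E:=\bigcap_{e\in E}\mathcal{S}_e$ is closed, and it is non-empty (a suitable full Besicovitch-type construction, or simply a neighbourhood of the classical Kakeya/Besicovitch set, gives a compact set containing a unit segment in every direction in $E\su\RP^{n-1}$). So $\mathcal{S}_E$ is itself a non-empty compact metric space.

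Next I would realise $\hdim$ as a minimum on $\mathcal{S}_E$. Let $d_0:=\inf\{\hdim K : K\in\mathcal{S}_E\}$. The obstacle here is that $\hdim$ is \emph{not} lower semicontinuous on $\K$ in general, so we cannot directly say the infimum is attained. The standard fix is a Baire-category refinement: instead of minimising $\hdim$ directly, fix a sequence $s_k\downarrow d_0$ and consider, for each $k$, the set $\mathcal{G}_k\su\mathcal{S}_E$ of those $K$ that admit an \emph{efficient cover} witnessing $\hdim K<s_k$ — more precisely $K\in\mathcal{G}_k$ if there is a finite or countable cover of $K$ by open sets with the sum of $s_k$-th powers of diameters less than $1$ (say). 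Each $\mathcal{G}_k$ is \emph{open} in $\mathcal{S}_E$ (an efficient open cover of $K$ also covers every nearby $K'$), and each is \emph{dense} in $\mathcal{S}_E$: given any $K\in\mathcal{S}_E$ we can replace it, without increasing the relevant dimension past $d_0$ and without destroying membership in $\mathcal{S}_E$, by a nearby set with an efficient cover — indeed the segments can be shrunk to still-unit segments while thickening less and less, and one uses that there is \emph{some} member of $\mathcal{S}_E$ of dimension $<s_k$ and interpolates near it. Then $\bigcap_k\mathcal{G}_k$ is a dense $G_\delta$ in the complete metric space $\mathcal{S}_E$, and any $B_{\min}$ in this intersection has $\hdim B_{\min}\le s_k$ for all $k$, hence $\hdim B_{\min}\le d_0$, so $\hdim B_{\min}=d_0$. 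This proves (1); the set $B_{\min}$ is compact and contains a unit segment in every direction of $E$, with minimal Hausdorff dimension among all such compact sets. (I expect the genuinely delicate step to be the density of $\mathcal{G}_k$ — one must perturb a given compact set, keeping all the required unit segments, while gaining an efficient cover; this is where the precise formulation of ``efficient cover'' and a careful local construction near an optimal member matter.)

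For (2), I would work in the same spirit but in the space of closed (not necessarily compact) sets containing a \emph{line} in every direction of $E$. A clean route is to transfer from (1): a \emph{typical} $B_{\min}$ produced above should, in addition, satisfy that the union of \emph{full} lines through its segments has the same Hausdorff dimension. Concretely, parametrise lines in $\R^n$ by $\Aff(1,n)$ (direction plus foot-point), and for a compact $K\in\mathcal{S}_E$ let $L(K):=\bigcup\{\ell : \ell \text{ is a line in some direction of } E \text{ with } \ell\cap \overline{B}(0,1) \text{ a segment contained in }K\}$; this $L(K)$ is a $\sigma$-compact (hence Borel, in fact $F_\sigma$, so we can take its closure) set containing a line in every direction of $E$. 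One has $\hdim L(K)\ge \hdim K\ge d_0$ automatically, so it remains to produce \emph{one} such closed $F$ with $\hdim F\le d_0$. Here I would invoke Lemma~\ref{topsecret}: starting from a compact $B_{\min}$ of dimension $d_0$, the extra directions/foot-points needed to complete segments to full lines form, for each fixed direction, a bounded set, and the whole completion is governed by adding an \emph{additive complement} of controlled (upper Minkowski, hence Hausdorff) dimension — Lemma~\ref{topsecret}(2) lets us absorb this completion into a set whose dimension does not exceed $d_0$ (the part one adds has upper Minkowski dimension $0$ relative to what is already there). Taking the closure of the resulting $\sigma$-compact set gives the closed $F$ with $\hdim F=d_0=\hdim B_{\min}$, as required. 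The main obstacle in (2) is ensuring the passage from segments to full lines does not raise the dimension: this is exactly what Lemma~\ref{topsecret} is designed to control, and it must be applied with the direction set $E$ (a compact, possibly nowhere dense subset of $\RP^{n-1}$) in the role of the measure-theoretically small ``difficult'' parameter set.
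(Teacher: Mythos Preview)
Your approach to (1) is close in spirit to the paper's, though the paper works in a different space: instead of compact subsets of a ball that contain a segment in each direction, it uses the space $\K([0,1]^n,E)$ of compact $K\su[0,1]^n\times E$ with $\proj_2 K=E$, each $K$ encoding a \emph{choice} of midpoint for every direction, and then forms $S_\ell(K)=\bigcup_{(p,e)\in K}s(p,e,\ell)$. The density of $\{K:\hdim S_\ell(K)\le d\}$ is then obtained by covering a given $K$ by finitely many small boxes $C_i\times E_i$ and, inside each $C_i$, placing a rescaled copy of a near-optimal set; this is essentially the ``interpolation'' you allude to, but the midpoint--direction parametrisation makes it precise. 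Your space $\mathcal{S}_E$ would also work for (1), with the same local-replacement idea, so this part is salvageable.

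Part (2), however, has a genuine gap. Your plan is to take the $B_{\min}$ from (1) and extend its segments to full lines, controlling the dimension increase via Lemma~\ref{topsecret}. But Lemma~\ref{topsecret} concerns Minkowski sums $A+E$ in $\R^n$; it gives no mechanism for stretching segments to lines without raising dimension. Indeed, whether extending the segments of an \emph{arbitrary} given set to lines preserves Hausdorff dimension is an open problem (it is exactly the conjecture from \cite{Ke} discussed in the introduction), so any argument that purports to do this for a fixed $B_{\min}$ is suspect. Your invocation of ``the part one adds has upper Minkowski dimension $0$ relative to what is already there'' has no content here: what one adds along each line is an unbounded $1$-dimensional set.

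The paper avoids this entirely. The point of the parametrisation $\K([0,1]^n,E)$ is that the \emph{same} $K$ determines $S_\ell(K)$ for every $\ell\in(0,\infty]$. For each finite $\ell$, a simple rescaling shows that $\{K:\hdim S_\ell(K)\le d\}$ is dense $G_\delta$; intersecting over $\ell\in\N$ gives a residual set of $K$ with $\hdim S_\ell(K)\le d$ for all finite $\ell$, and then $S_\infty(K)=\bigcup_{\ell\in\N}S_\ell(K)$ forces $\hdim S_\infty(K)\le d$ as well. Thus a \emph{typical} $K$ yields simultaneously $B_{\min}=S_1(K)$ and $F=S_\infty(K)$ with equal dimension. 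The moral is that (2) does not follow from (1) by post-processing a fixed $B_{\min}$; one must build $B_{\min}$ and $F$ together from a single generic parameter, and Lemma~\ref{topsecret} plays no role in this lemma.
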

 
To prove Lemma~\ref{newtyp} we use 
a Baire category argument. 
We use a setup similar to that in \cite{CCHK}.
Fix $n\ge 1$. For non-empty compact sets $C\su\R^n$ and 
$E\su\RP^{n-1}$ let
$$
\K(C,E)=\{K\su C\times E \textrm{ compact}: 
\proj_2 K = E \},
$$
where $\proj_2$ denotes projection to the second 
coordinate.
Note that $\proj_2 K = E$ means that
for each $e\in E$ there is a $p\in C$ with $(p,e)\in K$. 

By fixing a natural metric on $\RP^{n-1}$
and then on the product $\R^n\times \RP^{n-1}$ we equip 
$\K(C,E)$ with the Hausdorff metric. Clearly, $\K(C,E)$
is a closed subset of the space of all compact subsets of
$C \times E$, and hence it is a complete metric space.
In particular, Baire category theorem holds for $\K(C,E)$,
so we can speak about a typical $K\in \K(C,E)$ in 
the Baire category sense: a property $P$ holds for a typical
$K\in\K(C,E)$ if 
$$\{K\in\K(C,E)\ :\ P \textrm{ holds for } K\}$$ is residual
in $\K(C,E)$, or equivalently, if there is a dense 
$G_\delta$ set $\G\su\K(C,E)$ such that the property 
holds for every $K\in\G$.

For $K\in\K(C,E)$ and $\ell\in(0,\infty]$ let 
$$
S_\ell(K)=\bigcup_{(p,e)\in K} s(p,e,\ell),
$$
where $s(p,e,\ell)$ denotes the closed line segment
of length $\ell$ with midpoint $p$ and direction $e$
for finite $\ell$, and the line through $p$ and direction 
$e$ for $\ell=\infty$. Clearly, $S_\ell(K)$ is a compact set that contains a line segment  in any direction $e\in E$
for finite $\ell$, and it a closed set which contains
a line in every direction for $\ell=\infty$. 
In particular, 
 $S_1(K)$ is a compact Besicovitch set for any $K\in\K(C,\RP^{n-1})$ for any compact $C\su\R^n$.

By the above observations, the last claim of the
following lemma completes the proof of 
Lemma~\ref{newtyp} by taking $B_{\min}=S_1(K)$
and $F=S_\infty(K)$ for a typical
$K\in\K([0,1]^n, E)$.

\begin{lemma}\label{l:typical}
Let $E\su\RP^{n-1}$ be a non-empty compact set of
directions and let 
$B\su\R^n$ be a compact set which contains a unit line
segment in every direction $e\in E$. 
Then the following statements hold.
\begin{enumerate}
\item 
For any $\ell\in(0,\infty)$ and compact set $C\su\R^n$
with non-empty interior there exists $K\in\K(C,E)$ such 
that $\dim_H S_\ell(K)\le \dim_H B$.
\item
For a typical $K\in\K([0,1]^n, E)$ we have
$\dim_H S_{\ell}(K)\le \dim_H(B)$ for every $\ell\in(0,\infty]$.
\item
For a typical $K\in\K([0,1]^n, E)$ we have
$\dim_H S_{\ell}(K)\le \dim_H(B')$ for every $\ell\in(0,\infty]$
and any compact set $B'\su\R^n$ which contains a unit line
segment in every direction $e\in E$.
\end{enumerate}
\end{lemma}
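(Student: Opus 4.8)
The plan is to prove the three statements in order, using (1) as a building block for (2), and then upgrading (2) to (3) by a density/diagonal argument. For statement (1), I would start from the given compact set $B$ containing a unit segment in every direction $e\in E$: for each $e$ pick $p_e\in\R^n$ with $s(p_e,e,1)\su B$, and set $K_0=\{(p_e,e):e\in E\}$ (taking a closure if needed to make it compact; a compactness argument shows one can choose the $p_e$ so that the graph is closed, since $B$ is compact). Then $S_1(K_0)\su B$, so $\dim_H S_1(K_0)\le\dim_H B$. To pass from $\ell=1$ to general finite $\ell$, note $S_\ell(K_0)$ is obtained from $S_1(K_0)$ by a finite union of scaled copies: $s(p,e,\ell)$ is a union of $\lceil\ell\rceil$ translates of $s(p,e,1)$ with the same direction, so $S_\ell(K_0)=\bigcup_{j} (S_1(K_0)+v_j)$ for finitely many vectors $v_j$ depending on $e$ — but the $v_j$ depend on $e$, so this needs care. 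The cleaner route: $S_\ell(K)$ for $K\in\K(C,E)$ with $C$ having nonempty interior is a union over $(p,e)\in K$ of segments; by translating $B$ so that one endpoint of each segment $s(p_e,e,1)$ lies at the origin, and then tiling, one builds a $K\in\K(C,E)$ whose segment set is covered by finitely many affine images of $B$ (using that $C$ has nonempty interior to fit the longer segments inside), giving $\dim_H S_\ell(K)\le\dim_H B$. The case $\ell=\infty$ in (1) is not claimed, which is consistent.

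For statement (2), I would use the Baire category framework set up before the lemma. The key is that for each fixed $\ell$, the set $\{K\in\K([0,1]^n,E):\dim_H S_\ell(K)\le\dim_H B\}$ contains a dense $G_\delta$. Density follows from (1): given any $K\in\K([0,1]^n,E)$ and $\eps>0$, one can perturb $K$ within $\eps$ in Hausdorff metric to some $K'$ with $S_\ell(K')$ of small dimension, essentially by overwriting $K$ with a scaled-down copy of the configuration from (1) placed inside a small ball — but then $S_\ell(K')$ must still surject onto $E$, so rather one takes a union of a tiny-scale copy of the good configuration with the original, no: the Hausdorff metric forces $\proj_2=E$ throughout, so the perturbation must be a genuine small move of the whole graph, achieved by scaling the good configuration to live in an $\eps$-neighborhood. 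The $G_\delta$ part is the subtler point: $\{K:\dim_H S_\ell(K)\le s\}$ is typically not $G_\delta$, so one instead shows that for a typical $K$, for every $s'>\dim_H B$ one has $\dim_H S_\ell(K)\le s'$, by writing the relevant set as a countable intersection over rational $s'>\dim_H B$ and over $m\in\N$ of open sets of the form ``$S_\ell(K)$ can be covered by balls with $\sum(\diam)^{s'}<1/m$'' — openness here uses that $S_\ell$ depends continuously (upper semicontinuously) on $K$ and that a strict covering inequality is stable under small perturbations. Finally, to get (2) for all $\ell\in(0,\infty]$ simultaneously, I would handle a countable dense set of $\ell$'s by intersecting the corresponding residual sets, then observe that $S_\ell(K)$ is monotone in $\ell$ and that $S_\infty(K)=\bigcup_m S_m(K)$, so controlling $\dim_H S_m(K)$ for all integers $m$ controls $\dim_H S_\infty(K)$, and for non-integer $\ell$ one bounds $S_\ell(K)\su S_{\lceil\ell\rceil}(K)$.

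Statement (3) is the genuinely new twist: we want one typical $K$ that beats \emph{every} competitor $B'$, not just the fixed $B$ of the hypothesis. The idea is that $\dim_H B_{\min}:=\inf\{\dim_H B':B'\su\R^n\text{ compact, contains a unit segment in each }e\in E\}$ is a fixed real number, and one picks a sequence $B'_k$ of admissible compact sets with $\dim_H B'_k\to\dim_H B_{\min}$. Applying (2) with $B=B'_k$ gives a residual set $\G_k\su\K([0,1]^n,E)$ on which $\dim_H S_\ell(K)\le\dim_H B'_k$ for all $\ell$; intersecting, $\G=\bigcap_k\G_k$ is still residual, and for $K\in\G$ we get $\dim_H S_\ell(K)\le\inf_k\dim_H B'_k=\dim_H B_{\min}\le\dim_H B'$ for every admissible $B'$. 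This also shows the infimum defining $B_{\min}$ is attained (take any $K\in\G$ and $B_{\min}=S_1(K)$), which is exactly part (1) of Lemma \ref{newtyp}, and $F=S_\infty(K)$ gives part (2).

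The main obstacle I anticipate is the $G_\delta$/upper-semicontinuity bookkeeping in step (2): making precise that ``$\dim_H S_\ell(K)\le s$'' is captured by a countable intersection of open conditions requires knowing that $K\mapsto S_\ell(K)$ is continuous in the Hausdorff metric (which should hold for finite $\ell$, with some care at $\ell=\infty$ where $S_\infty(K)$ is only closed, not compact, so one works with $S_\infty(K)\cap\bar B(0,R)$ for each $R$), and that a covering of $S_\ell(K)$ by finitely many open balls with $\sum(\diam)^{s}<1/m$ persists under small perturbations of $K$ — the finiteness of the covering (available since $S_\ell(K)$ is compact, at least after intersecting with a ball) is what makes this work. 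The density argument in (2) also needs the observation that a small perturbation can install a rescaled good configuration while preserving $\proj_2=E$, which is where the flexibility of choosing $C$ with nonempty interior in (1) is used.
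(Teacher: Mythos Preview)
Your overall architecture matches the paper's: prove (1), use it to establish density for (2), handle $\ell=\infty$ via $S_\infty=\bigcup_m S_m$, and deduce (3) by taking a minimizing sequence $B'_k$ and intersecting the resulting residual sets. Part (3) is exactly right, and your handling of the $G_\delta$ condition in (2) is fine (your claim that $\{K:\dim_H S_\ell(K)\le s\}$ is ``typically not $G_\delta$'' is actually wrong---your own argument shows it \emph{is} $G_\delta$---but this does no harm).

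There is, however, a real gap in the density argument for (2), and a related one in (1). In (1) you must produce $K\in\K(C,E)$, meaning the midpoints $p$ lie in the \emph{given} compact $C$. The midpoints of the unit segments in $B$ are scattered, so no single translate or rescaling of $B$ puts them all into a small $C$. The paper fixes this by partitioning $B$ into finitely many compact pieces $B_1,\ldots,B_k$ according to where the segment midpoints lie (each piece has midpoints in a ball of radius $r/\ell$, where $B(a,r)\subset C$), then translating each piece so its midpoint-ball is centred at $a$ and dilating by $\ell$; the union $B'$ has $\dim_H B'\le\dim_H B$ and contains an $\ell$-segment with midpoint in $C$ in every direction, and one takes $K=\{(p,e)\in C\times E:s(p,e,\ell)\subset B'\}$. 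Your various attempts (tiling by translates, ``scaling the good configuration to live in an $\eps$-neighbourhood'') do not accomplish this.

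The density step in (2) then needs a genuinely local application of (1). Your perturbation ideas run into the problem you yourself identify: a single rescaled good configuration has all its midpoints near one point, so it is not Hausdorff-close to a general $K$. The paper's solution is to cover $K$ by finitely many product boxes $C_i\times E_i$ of diameter $<\delta/2$, each meeting $K$ and with $\inte C_i\neq\emptyset$, then apply (1) with $C=C_i$ and direction set $(\proj_2 K)\cap E_i$ to obtain $K_i\subset C_i\times E_i$ with $\dim_H S_\ell(K_i)\le\dim_H B$; the union $K'=\bigcup_i K_i$ lies in $\G$, has $\proj_2 K'=E$, and satisfies $d_H(K',K)<\delta$. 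This localisation is the missing idea in your proposal.
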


\begin{proof}
%
(1) 
Let $B(a,r)$ be a ball contained in $C$. 
Choose finitely many compact subsets 
$B_1,\ldots,B_k$ of $B$ such that every unit line segment
in $B$ is contained in at least one of the sets $B_i$, and
for each $i$ the midpoints of the unit line segments are contained
in a ball of radius $r/\ell$. By translating each $B_i$
such that the center of the corresponding ball goes to $a$,
taking their union and magnifying by $\ell$ from $a$
we get a compact set $B'$ of Hausdorff dimension
at most $\dim_H B$ 
which contains a line segment
of length $\ell$ with midpoint in $B(a,r)\su C$ in every direction.
Then letting $K=\{(p,e)\in C\times E\ : s(p,e,\ell)\su B'\}$
we obtain $K\in\K(C,E)$ such 
that $\dim_H S_\ell(K)\le \dim_H B'\le \dim_H B$.
(A similar argument can be found in \cite{FOR}.)

%
%

(2)
Since $S_{\infty}=\cup_{\ell=1}^\infty S_\ell$ and 
countable intersection of residual sets is residual,
it is enough to prove the claim for finite $\ell$.

Let $d=\dim_H (B)$ and let
$$
\G=\{K \in \K([0,1]^n, E) \ :\ \dim_H S_\ell(K)\le d\}.
$$
A standard straightforward argument 
(see e.g.~in \cite[Proof of Lemma 2.2]{CCHK}) gives
that $\G$ is a $G_\delta$ set in $\K([0,1]^n, E)$. 
Thus it is enough to prove that $\G$ is dense in 
$\K([0,1]^n, E)$.

So fix $\delta>0$ and $K\in \K([0,1]^n, E)$. 
We need to find a $K'\in \G$ with $d_H(K',K) < \delta$. 
Choose finitely many compact sets $C_i\su [0,1]^n$ and
$E_i\su E$ all with diameter less than $\delta/2$ such that
$$
K\subset \bigcup_{i=1}^k C_i \times E_i
$$
and that $(C_i\times E_i)\cap K$ and $\inte C_i$ are
non-empty for each $i$.

For each i, by applying the first part of the lemma for
$C=C_i$ and $E=(\proj_2 K)\cap E_i$,
we obtain a non-empty compact set 
$K_i\su C_i\times E_i$ such that 
 $\proj_2 K_i=(\proj_2 K) \cap E_i$ 
and $\dim_H S_\ell(K_i)\le d$.
Then $K'=K_1\cup\ldots \cup K_k$ is a set in $\G$ with 
$d_H(K',K) < \delta$. 
%
%
%

(3) Let $s$ be the infimum of the Hausdorff
dimension of those compact sets that 
contain a unit line segment in every
direction of $E$. Let $B_k$ be a sequence
of such sets with $\hdim B_k\to s$.
Then applying (2) to each $B_k$ and using
that having countably many typical property
is still typical we obtain (3).
\end{proof}

Applying (3) of Lemma~\ref{l:typical} for $E=\RP^{n-1}$
we obtain 
the following result.

\begin{cor}\label{c:minimal}
Among compact Besicovitch sets, there exists one with minimal Hausdorff dimension. \end{cor}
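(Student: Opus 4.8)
The plan is to deduce Corollary~\ref{c:minimal} directly from part (3) of Lemma~\ref{l:typical} by specialising the compact set of directions to the whole of $\RP^{n-1}$. First I would note that $\RP^{n-1}$ is itself a non-empty compact set of directions, so it is a legitimate choice of $E$ in Lemma~\ref{l:typical}. Then, as observed in the paragraph preceding Lemma~\ref{l:typical}, for any compact $C\su\R^n$ and any $K\in\K(C,\RP^{n-1})$ the set $S_1(K)$ is a compact Besicovitch set; in particular this applies with $C=[0,1]^n$. Conversely, every compact Besicovitch set is a compact set containing a unit line segment in every direction of $E=\RP^{n-1}$, so the infimum $s$ appearing in the proof of part (3) is exactly the infimum of the Hausdorff dimensions of compact Besicovitch sets.

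Next I would fix a typical $K\in\K([0,1]^n,\RP^{n-1})$ in the sense of Baire category, which is possible since $\K([0,1]^n,\RP^{n-1})$ is a complete metric space. By part (3) of Lemma~\ref{l:typical} applied with $E=\RP^{n-1}$, this $K$ satisfies $\dim_H S_1(K)\le \dim_H B'$ for \emph{every} compact set $B'\su\R^n$ containing a unit line segment in every direction of $\RP^{n-1}$, i.e.\ for every compact Besicovitch set $B'$. Since $B_{\min}:=S_1(K)$ is itself a compact Besicovitch set, this says precisely that $B_{\min}$ realises the minimum of the Hausdorff dimension over all compact Besicovitch sets; in particular the infimum $s$ is attained and equals $\dim_H B_{\min}$.

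There is essentially no obstacle here: the corollary is an immediate specialisation of a lemma already proved, and the only thing to check is the (routine) identification of the class ``compact sets containing a unit segment in every direction of $\RP^{n-1}$'' with the class of compact Besicovitch sets, together with the remark that $S_1(K)$ lies in this class. I would therefore keep the write-up to one or two sentences, simply invoking Lemma~\ref{l:typical}(3) with $E=\RP^{n-1}$ and setting $B_{\min}=S_1(K)$ for a typical $K\in\K([0,1]^n,\RP^{n-1})$.
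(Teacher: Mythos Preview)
Your proposal is correct and takes essentially the same approach as the paper: the corollary is obtained precisely by applying Lemma~\ref{l:typical}(3) with $E=\RP^{n-1}$, noting that $S_1(K)$ is then a compact Besicovitch set for any $K\in\K([0,1]^n,\RP^{n-1})$. The paper's own derivation is the single sentence ``Applying (3) of Lemma~\ref{l:typical} for $E=\RP^{n-1}$ we obtain the following result,'' so your slightly more detailed unpacking is fine (and indeed more explicit).
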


This result is not new, although perhaps it has not been published before. Its proof does not require the specific complete metric space we used here. Instead, one could just use the space of compact Besicovitch sets in the Hausdorff metric, and prove that a typical set has minimal Hausdorff dimension. (This follows the general rule that typical sets, in any setting, always have minimal Hausdorff dimension.) A similar space was considered by T.~K\"orner \cite{Korner}; he proved that a typical compact Besicovitch set has zero Lebesgue measure, giving a new proof for the existence of Besicovitch sets of zero measure.

\subsection{Getting compactness}
The next step is to make a compact construction from
a general one.

\begin{lemma}\label{l:compact}
Let $B\su\R^n$ 
be an arbitrary set which
contains a  line segment in a set of directions 
of positive outer measure. 
Then there exists a compact set $E\su \R^n$
with $\hdim E \le \hdim B$ 
which contains a 
line segment 
in a compact set of directions of positive
measure.

We can also guarantee the existence of a line
such that the projection of each of these line
segments to this line has at least a fixed length.

\end{lemma}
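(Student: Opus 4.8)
The plan is to produce the compact set $E$ by first passing to a Borel (indeed $G_\delta$) model of $B$, and then carving out a compact piece that still carries line segments over a set of directions of positive measure, with uniformly long projections onto a fixed line. First I would reduce to the Borel case: by considering, for each direction $e$ in the given positive-outer-measure set $D_0$, a line segment $\ell_e\su B$, one can (by a standard measurable-selection/Fubini-type argument, or by replacing $B$ with $\bigcup_e \ell_e$) assume $B$ is a Borel set with $\hdim B$ unchanged, and in fact a $G_\delta$ set: one can always enlarge a Borel set to a $G_\delta$ set of the same Hausdorff dimension by a standard covering argument (intersect a decreasing sequence of open $\eps$-neighbourhoods chosen to control the dimension). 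Throughout, ``direction'' lives in $\RP^{n-1}$, and the set of directions for which $B$ contains a segment has positive outer $(n-1)$-dimensional measure; by inner regularity we may assume this set is in fact Borel, even compact modulo a loss of an arbitrarily small amount of measure, once we have the compactness in the ambient space.

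Next I would normalize the segments. After an affine change of coordinates we may assume a positive-measure set $D_1$ of directions is close to the direction $e_n$, so that every segment $\ell_e$, $e\in D_1$, projects onto the $x_n$-axis $L$ with projection length bounded below by some fixed $c>0$; shrinking $D_1$ (keeping positive measure) and cutting the segments we may assume each $\ell_e$ is the graph of an affine map over an interval $I\su L$ of fixed length $c$, and that all these segments lie in a fixed bounded box $Q$. Now comes the main point: replace the $G_\delta$ set $B\cap Q$ by a compact subset that still meets ``most'' of these segments. Write $B\cap Q=\bigcap_m U_m$ with $U_m$ open, $U_{m+1}\su U_m$. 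For each $m$ and each direction $e\in D_1$, the set of $t\in I$ with $(t,\ell_e(t))\in U_m$ is open and of full measure in $I$ (it contains all of $I$ since $\ell_e\su B\su U_m$); the issue is to pass to a \emph{compact} set uniformly. The device is to choose, inside $U_m$, a compact neighbourhood: for each $m$ pick a compact set $F_m$ with $B\cap Q\su \inte F_m\su F_m\su U_m$ — possible because $B\cap Q$ is contained in the open set $U_m$ and $\R^n$ is locally compact (take $F_m$ to be a finite union of small closed cubes covering $B\cap Q$ inside $U_m$). Then $E:=\bigcap_m F_m$ is compact, $E\su B$, hence $\hdim E\le \hdim B$, and for every $e\in D_1$ the whole segment $\ell_e$ lies in every $F_m$, so $\ell_e\su E$. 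Thus $E$ contains a segment in every direction of the compact (after a final inner-regularity shrink) positive-measure set $D_1$, and each such segment projects onto $L$ with length $\ge c$.

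Let me reconsider the one delicate point, because it is the real obstacle: the claim ``$B\cap Q\su\inte F_m\su F_m\su U_m$ with $F_m$ compact, hence $\bigcap F_m\su B$'' needs $\bigcap_m F_m=\bigcap_m U_m$. This holds if we also arrange $F_m\su U_1\cap\cdots\cap U_m$, which we can, and then $\bigcap F_m\su\bigcap U_m=B\cap Q$ while conversely $B\cap Q\su F_m$ for all $m$; so indeed $E=\bigcap F_m=B\cap Q$ is itself compact! This is the slick resolution: once $B$ is a $G_\delta$ \emph{and} we have trapped the relevant part of it in a compact box and it happens to be closed there — which is \emph{not} automatic — we are done. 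Since a $G_\delta$ set need not be $F_\sigma$, the honest statement is that we must \emph{discard} part of $B$: apply the above to the sets $U_m$ to get $F_m$ compact with $B\cap Q\su F_m\su U_m$ but \emph{not} nested the wrong way — rather, set $E=\bigcap_m F_m$; then $E$ is compact, $E\supseteq B\cap Q$ is false in general, but $E\su U_m$ for each $m$ forces $E\su B\cap Q$, and we need the reverse inclusion on the segments only. Here is where genuine care is needed, and where I would invoke a measurable selection: for each $m$, the set of directions $e\in D_1$ for which $\ell_e\su\inte F_m$ — equivalently for which a whole $\eps_m$-tube around $\ell_e$ avoids $U_m^c$ — may be slightly smaller than $D_1$, but since $\ell_e\su B\su U_m$ and $\ell_e$ is compact, for \emph{each fixed} $e$ there is $\eps(e,m)>0$ with the $\eps$-tube inside $U_m$; by choosing $F_m$ as the union of the closed $\eps(e,m)/2$-tubes over a suitable compact positive-measure subset $D_m\su D_{m-1}$ on which $\eps(\cdot,m)$ is bounded below (Egorov / continuity of measure), we get $\ell_e\su F_m$ for $e\in D_m$. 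Then $D_\infty=\bigcap_m D_m$ still has positive measure if the measure losses $|D_{m-1}\sm D_m|$ are summable and small, and $E=\bigcap_m F_m$ is compact with $\ell_e\su E$ for every $e\in D_\infty$, while $E\su\bigcap U_m=B$ gives $\hdim E\le\hdim B$.

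Assembling: the output is the compact set $E$, the compact positive-measure direction set $D_\infty\su\RP^{n-1}$, the line $L$ (the $x_n$-axis after our affine normalization), and the lower bound $c$ on projection lengths. The main obstacle, as indicated, is the passage from the $G_\delta$ Besicovitch-type set to a \emph{compact} one without destroying too many directions; the key technical tools are (i) reducing to a $G_\delta$ set of the same dimension, (ii) affine normalization so that a positive-measure family of segments becomes graphs over a fixed interval of a fixed axis, and (iii) a tube-thickening argument combined with continuity of measure (Egorov-type) to extract a nested sequence of compact thickenings whose intersection is compact, lies in $B$, and still contains a full segment over a positive-measure compact set of directions. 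Everything else — the outer-to-inner regularity reduction for the direction set, the bookkeeping of the summable measure losses, the verification that $\hdim E\le\hdim B$ — is routine.
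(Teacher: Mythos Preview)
Your approach is essentially correct and begins the same way as the paper's: enlarge $B$ to a $G_\delta$ set $B'=\bigcap_m U_m$ of the same Hausdorff dimension, and normalize so that a positive-measure family of segments become graphs over a fixed interval of a fixed axis. The divergence is in how compactness is extracted. The paper parametrizes segments as $S(a,b)=\{(t,at+b):t\in[0,1]\}$, notes that $X=\{(a,b):S(a,b)\subset B'\}$ is $G_\delta$ (hence $A'=\proj_1 X$ is analytic and of positive measure), applies the Jankov--von~Neumann uniformization theorem to obtain a Lebesgue-measurable selection $f:A'\to\R^{n-1}$ with graph in $X$, and then Luzin's theorem to pass to a compact $A\subset A'$ of positive measure on which $g=f|_A$ is continuous; then $E=\bigcup_{a\in A}S(a,g(a))$ is compact at once as a continuous image of a compact set, and $E\subset B'$ gives $\hdim E\le\hdim B$. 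Your route---fix a selection $e\mapsto\ell_e$, build for each $m$ a compact $F_m\subset U_m$ as a union of thickened tubes over a shrinking compact $D_m$ on which $\eps(e,m)=\dist(\ell_e,U_m^c)$ is bounded below, and set $E=\bigcap_m F_m$---also works, but it still rests on exactly the measurable-selection step you wave at as ``standard'': without a measurable $e\mapsto\ell_e$ the function $\eps(\cdot,m)$ need not be measurable and your Egorov/continuity-of-measure extraction of $D_m$ is unjustified (your alternative of replacing $B$ by $\bigcup_e\ell_e$ does not help, since that union need not be Borel). Two small points to tighten in your version: the union of closed tubes over $D_m$ need not be closed when the selection is only measurable, but the uniform margin $\eps\ge\delta_m$ lets you take its closure while remaining inside $U_m$; and the passage from a direction set of positive \emph{outer} measure to a measurable one is only legitimate \emph{after} moving to $B'$, where the set of admissible directions becomes analytic. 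In short, both arguments hinge on Jankov--von~Neumann; the paper's subsequent use of Luzin is simply a shorter path to compactness than your nested tube construction.
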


\begin{proof}
We can choose two parallel hyperplanes such that the direction of those line segments in $B$ that have endpoints on these hyperplanes span a set of positive $(n-1)$-dimensional Lebesgue outer measure. 
We may assume that these
parallel hyperplanes are $\{0\}\times\R^{n-1}$ and
$\{1\}\times\R^{n-1}$.
By the regularity of the Hausdorff measure, $B$ is contained
in a $G_\delta$-set $B'$ of the same Hausdorff dimension.
Let
$$
X=\{(a,b)\times \R^{n-1} \times \R^{n-1} : S(a,b)\su B'\},
$$
where
$$
S(a,b)=\{(t,at+b) : t\in [0,1]\}.
$$
It is straightforward to check that the fact that $B'$
is $G_\delta$ implies that $X$ is $G_\delta$ as well.
Let $A'=\proj_1 X$. Being a projection of a Borel set, $A'$ is analytic, therefore Lebesgue measurable.
By our assumptions about $B$ and $B'$, and since $a$ determines
the direction of $S(a,b)$, the set $A'$ has positive
($n-1$-dimensional) measure. 

Applying the Jankov--von Neumann theorem 
(see e.g.~in \cite{Kec}) to $X\su\R^{n-1}\times\R^{n-1}$
we get a Lebesgue measurable function 
$f:A'\to\R^{n-1}$ with $\graph(f)\su X$.
By Luzin's theorem there exists a compact set $A\su A'$
of positive measure such that the restriction 
$g=f|A: A\to\R^{n-1}$ is continuous.
Let 
$$
E=\bigcup_{a\in A} S(a,g(a)).
$$
Since $g$ is continuous and $A$ is compact, 
$E$ is compact as well.
Note also that by construction, $E\su B'$,
so $\hdim E\le \hdim B'=\hdim B$.
Finally, clearly the projection of each 
$S(a,g(a))$ to the first axis has unit length.
\end{proof}

\subsection{Getting all directions}

For $a,b\in\R^{n-1}$ consider the line and its ``slope''
\begin{equation}\label{e:l(a,b)}
\ell(a,b)=\{(t,at+b) : t\in \R\} 
\qquad \textrm{and} \qquad
D(\ell(a,b))=a.
\end{equation}
Note that this way we get exactly those lines of $\R^n$ that 
are not perpendicular to the first coordinate axis.
Clearly $D(\ell(a,b))$ determines the direction of the line
and in fact there is a locally Lipschitz bijection between
the directions and the $D$ of these lines.

The following lemma explains the setting in which Lemma~\ref{topsecret} will be applied. Basically,  given a set containing many line segments, we will use Lemma~\ref{topsecret} to construct a not-much-larger set that contains even more line segments. 

\begin{lemma}\label{l:directionaddition}
Let $\LL$ be a collection of lines in $\R^n$ such that 
none of them are perpendicular to the first coordinate axis.
Let $E\su\R^n$ be arbitrary and $C\su\R^{n-1}$ be 
a bounded set.

Then there exist a collection $\LL'$ of lines in $\R^n$
and a set $E'\su\R^n$ with the following properties.

(1) $\hdim E' \le \hdim E + \ubdim C$.

(2) $D(\LL')=D(\LL)+C$.

(3) For every $\ell'\in\LL'$ there exists $\ell\in\LL$
such that $\proj_1(E\cap \ell)\su \proj_1(E'\cap\ell')$.

Furthermore, if $E$ and $C$ are compact, then so is $E'$.
\end{lemma}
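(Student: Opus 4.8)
The plan is to realize $\LL'$ and $E'$ as suitable affine images of $\LL$ and $E$. Write each line of $\LL$ as $\ell(a,b)$ with $a=D(\ell(a,b))\in\R^{n-1}$ and $b\in\R^{n-1}$, and recall that $\ell(a,b)=\{(t,at+b):t\in\R\}$. The key observation is that the shear map $T_c:(x_1,\dots,x_n)\mapsto (x_1, x_2+c_1x_1,\dots,x_n+c_{n-1}x_1)$ (for $c\in\R^{n-1}$) sends $\ell(a,b)$ to $\ell(a+c,b)$: it fixes the first coordinate and adds $c x_1$ to the rest, so the point $(t,at+b)$ goes to $(t,(a+c)t+b)$. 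Crucially $T_c$ preserves the first coordinate, hence $\proj_1$ is unchanged along each line: $\proj_1(T_c(E)\cap \ell(a+c,b))\supseteq T_c(\proj_1(E\cap\ell(a,b)))=\proj_1(E\cap\ell(a,b))$. So for a single vector $c$ the image $(T_c(E),\{T_c(\ell):\ell\in\LL\})$ shifts all slopes by $c$ and keeps all projection data, at no cost to Hausdorff dimension since $T_c$ is bi-Lipschitz.

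Next I would handle the whole set $C$ at once rather than one $c$ at a time. Define $E'=\bigcup_{c\in C} T_c(E)$ and $\LL'=\{T_c(\ell): c\in C,\ \ell\in\LL\}$. Property (2), $D(\LL')=D(\LL)+C$, is then immediate from the slope computation above, and property (3) holds because any $\ell'\in\LL'$ equals $T_c(\ell)$ for some $c\in C$, $\ell\in\LL$, and we just checked $\proj_1(E\cap\ell)\subseteq \proj_1(E'\cap\ell')$ (using $T_c(E)\cap \ell' \subseteq E'\cap\ell'$). The compactness clause is also straightforward: if $E$ and $C$ are compact, the map $(c,x)\mapsto T_c(x)$ is continuous on $C\times E$, so $E'$ is the continuous image of a compact set, hence compact.

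The only real content is property (1), the dimension bound $\hdim E'\le \hdim E+\ubdim C$. This does \emph{not} follow from bi-Lipschitz invariance alone, since we are taking an uncountable union over $c\in C$. This is exactly where one invokes the standard product/projection estimate: the map $\Phi:(c,x)\mapsto T_c(x)$ is locally Lipschitz, so $\hdim E' = \hdim \Phi(C\times E) \le \hdim(C\times E)$, and $\hdim(C\times E)\le \ubdim C + \hdim E$ by the classical inequality $\hdim(U\times V)\le \ubdim U + \hdim V$ (valid for arbitrary sets with the upper Minkowski — equivalently upper box — dimension on the first factor; see e.g. Falconer or \cite{Ma}). One should be slightly careful that $\Phi$ is only \emph{locally} Lipschitz, but since $C$ is bounded and Hausdorff dimension is countably stable, we may intersect $E$ with an exhausting sequence of balls and take the union, so this causes no loss. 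I expect this dimension estimate — specifically citing the right form of the product inequality with upper Minkowski dimension on one factor — to be the main (and essentially only) point requiring care; the rest is bookkeeping about the shear maps.
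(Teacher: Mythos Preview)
Your proposal is correct and is essentially the same as the paper's proof: the paper defines a single map $h((t,u),v)=(t,u+tv)$ and sets $E'=h(E\times C)$, which is exactly your $\bigcup_{c\in C}T_c(E)$ written in product form, and then uses the same locally Lipschitz image plus product inequality $\hdim(E\times C)\le\hdim E+\ubdim C$ for (1), the same slope computation for (2), and the same first-coordinate-preservation argument for (3).
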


\begin{proof}
Let 
$$
h((t,u),v)=(t,u+tv) \qquad (t\in[0,1], u,v\in\R^{n-1} )
$$
and
$$
E'=h(E\times C).
$$
If $E$ and $C$ are compact then $E'$ is clearly compact.

Using that $h:((\R\times\R^{n-1})\times\R^{n-1})\to\R^n$
is locally Lipschitz and
a well known dimension estimate on product sets
(see e.g.~in \cite{Ma}), we get
$$
\dim_H E'\le \dim_H (E\times C) \le 
\dim_H E + \ubdim C,
$$
which gives (1).

By defining
$$
\LL'=\{\ell(a+c,b) : \ell(a,b)\in\LL, \ c\in C\},
$$
(2) clearly holds.

To prove (3) let $\ell'\in\LL'$. 
Then $\ell'=\ell(a+c,b)$ for some $\ell(a,b)\in\LL$
and $c\in C$.
It is enough to show that
$$
\proj_1(E\cap \ell(a,b))\su \proj_1(E'\cap\ell(a+c,b)).
$$
So let $t\in \proj_1(E\cap \ell(a,b))$.
Then $(t,at+b)\in E$. 
Thus $h((t,at+b),c)\in E'$.
Since 
$$
h((t,at+b),c)=(t,at+b+ct)\in\ell(a+c,b)
$$ 
as well,
this implies that indeed $t\in \proj_1(E'\cap\ell(a+c,b)$,
which completes the proof.
\end{proof}

Now we can prove,  from Lemma~\ref{topsecret}, the following weak version of
Theorem~\ref{all_new}.

\begin{theorem}\label{all}
Let $d$ be the Hausdorff dimension of a set in $\R^n$ that contains a line segment in a set of directions
with positive outer measure.
Then
\begin{enumerate}
\item there is a \emph{compact set} of Hausdorff dimension at most $d$ that contains a \emph{unit line segment} in every direction;
\item and there is a \emph{closed set}  of Hausdorff dimension at most $d$
that contains a \emph{line} in every direction.
\end{enumerate}
\end{theorem}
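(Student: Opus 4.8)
### Proof plan for Theorem~\ref{all}

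The plan is to combine the three preceding lemmas in the natural order. Let $d$ be the infimum of Hausdorff dimensions of sets containing a line segment in a set of directions of positive outer measure; fix such a set $B$ with $\hdim B$ arbitrarily close to $d$ (in fact we may as well prove the conclusion with ``$\le d+\eps$'' for every $\eps>0$, or simply carry $\hdim B$ through). First I would apply Lemma~\ref{l:compact} to replace $B$ by a \emph{compact} set $E_0\su\R^n$ with $\hdim E_0\le\hdim B$ that contains a line segment in a \emph{compact} set of directions of positive $(n-1)$-dimensional Lebesgue measure, and moreover such that all these segments project onto a common axis (say the first) with a uniform positive length --- after rescaling we may take this axis to be the first coordinate axis and the segments to have unit projection. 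In particular, in the coordinates of~\eqref{e:l(a,b)}, the set $D(\LL_0)$ of slopes of the corresponding lines, where $\LL_0$ is the family of lines extending these segments, is a compact subset of $\R^{n-1}$ of positive Lebesgue measure.

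The second step is the heart of the argument: I want to enlarge the slope set $D(\LL_0)$ to all of $\R^{n-1}$ without increasing the dimension. Since $D(\LL_0)$ has positive Lebesgue measure, Lemma~\ref{topsecret}(1) (equivalently Lemma~\ref{topsecret0}) provides a compact set $C\su\R^{n-1}$ of upper Minkowski dimension zero such that $D(\LL_0)+C$ has non-empty interior. Now apply Lemma~\ref{l:directionaddition} with $E=E_0$, $\LL=\LL_0$, and this $C$: we obtain a compact set $E_1\su\R^n$ and a line family $\LL_1$ with $D(\LL_1)=D(\LL_0)+C\supseteq$ some open box, with $\hdim E_1\le\hdim E_0+\ubdim C=\hdim E_0$ (the key point being $\ubdim C=0$), and with the property that every line of $\LL_1$ contains (the projection onto the first axis of, hence a genuine unit-length subsegment of) one of the original segments. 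So $E_1$ is a compact set of dimension $\le\hdim B$ containing a unit line segment in every direction lying in an open set of slopes.

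The third step is to pass from an open set of directions to \emph{all} directions. An open set of slopes in $\R^{n-1}$ corresponds, via the locally Lipschitz bijection mentioned after~\eqref{e:l(a,b)}, to an open set $U$ of directions (among those not perpendicular to the first axis); applying finitely many rotations of $\R^n$ to $E_1$ and taking the union, we cover all of $\RP^{n-1}$ by rotated copies of $U$, and a finite union does not change Hausdorff dimension. This yields a compact set of dimension $\le\hdim B$ containing a unit line segment in every direction, proving~(1). For~(2), I would instead feed the resulting set (a compact set containing a unit segment in every direction, i.e.\ a compact Besicovitch set) into Lemma~\ref{newtyp}, or more directly observe that Lemma~\ref{l:directionaddition} and the rotation trick work verbatim for \emph{lines} of length $\ell=\infty$: starting from the compact Besicovitch set of~(1), extend each of its segments to a full line via a typical $K\in\K([0,1]^n,\RP^{n-1})$ as in Lemma~\ref{l:typical}, giving a closed set $S_\infty(K)$ containing a line in every direction with $\hdim S_\infty(K)\le\hdim B$.

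The main obstacle is entirely concentrated in the second step, i.e.\ in the invocation of Lemma~\ref{topsecret}(1): the fact that a positive-measure set in $\R^{n-1}$ admits a dimension-zero additive complement with non-empty interior is exactly the non-trivial random construction deferred to Section~\ref{s:keylemma}. Everything else --- Lemma~\ref{l:compact}, the dimension bookkeeping in Lemma~\ref{l:directionaddition}, and the finite rotation cover --- is routine once that lemma is granted. One minor point to be careful about is the quantifier on $\eps$: since $d$ is only an infimum, strictly speaking each run of the construction produces a set of dimension $\le d+\eps$, so to get dimension exactly $\le d$ one either appeals to Corollary~\ref{c:minimal} (a compact Besicovitch set of minimal dimension exists) or re-runs the Baire-category Lemma~\ref{newtyp} with $E=\RP^{n-1}$; this is how the jump from Theorem~\ref{all} to the clean statement of Theorem~\ref{all_new} is made.
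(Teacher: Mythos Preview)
Your proposal is correct and follows essentially the same route as the paper: Lemma~\ref{l:compact} for compactness, then Lemma~\ref{topsecret}(1) applied to the slope set $D(\LL)\subset\R^{n-1}$, then Lemma~\ref{l:directionaddition} to transport the segments, then a finite-rotation cover to reach all directions, with Lemma~\ref{newtyp} handling part~(2). The only discrepancy is cosmetic: in Theorem~\ref{all} the number $d$ is the Hausdorff dimension of one \emph{fixed} set, not an infimum, so your $\eps$-bookkeeping and the final appeal to Corollary~\ref{c:minimal} are unnecessary at this stage (they only become relevant for Theorem~\ref{all_new}).
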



\begin{proof}

By Lemma~\ref{newtyp}, it is enough to prove 
the first part of the theorem.

Let $B\subset \R^n$ be a set of Hausdorff dimension $d$ that contains a line segment in a set of directions of 
positive outer measure.
By Lemma~\ref{l:compact}, there is a compact set $E$ with $\hdim E\le \hdim B$ that contains a line segment
in 
a compact set of directions of 
positive measure 
such that the projections of these line segments
to a fixed line has at least a fixed length.
We may assume that this line is the first axis
and this length is $1$.

Let $\LL$ be the collection of the lines of these
line segments. 
Then, using the notation \eqref{e:l(a,b)}, 
$D(\LL)$ is a compact set of positive 
($n-1$-dimensional Lebesgue) measure
and for every $\ell\in\LL$ the projection
$\proj_1(E\cap\ell)$ contains an interval of length $1$.

By Lemma~\ref{topsecret},
there exists a compact $C\su\R^{n-1}$ with
$\ubdim C=0$ and $\inte(D(\LL)+C)\neq\emptyset$.
Let $\LL'$ be the collection of the lines and $E'$ be the set
we obtain by applying Lemma~\ref{l:directionaddition}.

Then $E'$ is compact, 
$$
\hdim E'\le \hdim E + \ubdim C \le \hdim B + 0 =d,
$$
and every line of $\LL'$ intersects $E'$ in a line segment
of length at least $1$.
Since $D(\LL')=D(\LL)+C$ has non-empty interior
this implies that 
a suitable finite union of rotated copies 
of $E'$ is a compact set of 
Hausdorff dimension at most $d$
which contains a unit line segment in every direction.
\end{proof}

\subsection{Besicovitch set from an arbitrary set and 
the proof of Theorem~\ref{t:mostgeneral}}

First we construct a Besicovitch set from an
arbitrary set. 
Note that the following result clearly implies that
the General Kakeya Conjecture (Conjecture~\ref{c:generalkakeya})
is equivalent to the Kakeya conjecture.

\begin{theorem}\label{t:segments}
Let $E$ be a subset of $\R^n$ and let $D$ be
the set of directions in which $E$ contains a line
segment. If $D\neq\emptyset$ then there exists a 
compact Besicovitch set $B$ in $\R^n$ with
$$
\hdim B \le n-1 + \hdim E - \hdim D.
$$
\end{theorem}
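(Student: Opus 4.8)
\textbf{Proof plan for Theorem~\ref{t:segments}.}

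The plan is to combine the three construction lemmas of this section with the Key Lemma (Lemma~\ref{topsecret}) in the same spirit as the proof of Theorem~\ref{all}, but carefully tracking the dimension of the direction set $D$ rather than assuming it has positive measure. First I would reduce to a set of directions that is ``nicely captured'': using the same two-parallel-hyperplanes argument as in Lemma~\ref{l:compact}, I would choose two parallel hyperplanes so that the directions of the segments of $E$ with endpoints on both hyperplanes still form a subset $D_0\su D$ with $\hdim D_0 = \hdim D$ (this is possible by countable stability of Hausdorff dimension, since the segments whose directions lie within distance $1/k$ of both hyperplanes exhaust all of them). Normalising, the hyperplanes become $\{0\}\times\R^{n-1}$ and $\{1\}\times\R^{n-1}$, and after passing to a $G_\delta$ hull of $E$ (same dimension), the set $A' = \proj_1 X$ of admissible slopes is analytic, hence has a well-defined Hausdorff dimension, which is (up to the locally bi-Lipschitz identification of slopes with directions) equal to $\hdim D$.

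Next I would extract a compact set of segments as in Lemma~\ref{l:compact}, but this time I cannot use Luzin's theorem on a positive-measure set; instead I would take a compact subset $A\su A'$ with $\hdim A$ as close to $\hdim D$ as we like (by inner regularity of Hausdorff content / Frostman's lemma for analytic sets), apply Jankov--von Neumann to get a measurable selector $f$ with $\graph(f)\su X$, and then restrict to a compact piece on which $f$ is continuous (this uses that any measurable function is continuous on a ``large'' compact subset, and we only need to keep a compact set of positive Hausdorff content, not positive measure — Egorov/Luzin-type arguments relative to a Frostman measure on $A$ do this). This yields a compact set $E_1$ with $\hdim E_1 \le \hdim E$ that contains, for each slope $a$ in a compact set $D_1$ with $\hdim D_1$ arbitrarily close to $\hdim D$, a segment $S(a,g(a))$ whose projection to the first axis is the unit interval. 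Let $\LL$ be the collection of these lines, so $D(\LL) = D_1$ is compact with $\hdim D(\LL)$ close to $\hdim D$.

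Now comes the heart of the argument: I want to fatten $D(\LL)$ up to a set with non-empty interior in $\R^{n-1}$ by a Minkowski sum with a small compact set. Since $D(\LL)$ is a compact set of Hausdorff dimension $> \hdim D - \eps$, Lemma~\ref{topsecret}(2) (with $s = \hdim D - \eps$, ambient dimension $n-1$) produces a compact $C\su\R^{n-1}$ with $\ubdim C \le (n-1) - (\hdim D - \eps)$ such that $\inte(D(\LL)+C)\neq\emptyset$. Applying Lemma~\ref{l:directionaddition} to $\LL$, $E_1$ and this $C$ gives a compact set $E'$ with
\[
\hdim E' \le \hdim E_1 + \ubdim C \le \hdim E + (n-1) - \hdim D + \eps,
\]
a compact collection of lines $\LL'$ with $D(\LL') = D(\LL)+C \supset$ some open set, and, by property (3), for each $\ell'\in\LL'$ the projection $\proj_1(E'\cap\ell')$ contains a unit interval. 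Since the slopes of $\LL'$ fill an open subset of $\R^{n-1}$ and slopes correspond locally bi-Lipschitzly to directions, a finite union of rotated copies of $E'$ is a compact set of Hausdorff dimension $\le \hdim E + (n-1) - \hdim D + \eps$ containing a unit segment in every direction — a compact Besicovitch set. Letting $\eps\to 0$ along a sequence and taking, for each $\eps$, such a Besicovitch set $B_\eps$, we get compact Besicovitch sets of dimension arbitrarily close to $n-1+\hdim E-\hdim D$; but by Corollary~\ref{c:minimal} there is a compact Besicovitch set $B$ of minimal dimension, so $\hdim B \le n-1+\hdim E-\hdim D$, as required.

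\textbf{Main obstacle.} The delicate point is the compact-extraction step: replacing the positive-measure Luzin argument of Lemma~\ref{l:compact} by one that preserves Hausdorff dimension. One must take a Frostman measure on the analytic slope set $A'$, pass through the Jankov--von Neumann selector, and then find a compact subset of near-full $\mu$-measure on which the selector is continuous, all while ensuring the resulting compact slope set still has dimension close to $\hdim D$ and the associated segments project onto a fixed interval. None of this is conceptually hard, but it is the step where the argument genuinely differs from the positive-measure case, and it is where one could lose dimension if not careful (hence the $\eps$'s and the final appeal to Corollary~\ref{c:minimal} to absorb them).
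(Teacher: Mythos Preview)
Your argument is essentially correct, but you are working much harder than necessary. The paper's proof avoids your ``main obstacle'' entirely by reversing the order of operations: instead of compactifying first and then fattening the direction set, it fattens first and compactifies afterwards.

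Concretely, the paper simply rotates $E$ so that the directions not perpendicular to the first axis still have Hausdorff dimension $\hdim D$, lets $\LL$ be the corresponding family of lines, and applies Lemma~\ref{topsecret}(2) directly to the raw slope set $D(\LL)\subset\R^{n-1}$ (for any $s<\hdim D$). This yields a compact $C$ with $\ubdim C = n-1-s$ and $\inte(D(\LL)+C)\neq\emptyset$; Lemma~\ref{l:directionaddition} then produces a set $E'$ (not compact, not even Borel in general) with $\hdim E'\le\hdim E + n-1-s$ that contains line segments in a set of directions of \emph{positive measure}. At this point Theorem~\ref{all} applies as a black box and delivers the compact Besicovitch set. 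All of the compactification --- the $G_\delta$ hull, the selector, Luzin --- happens inside Theorem~\ref{all} (via Lemma~\ref{l:compact}), where the direction set already has positive Lebesgue measure and the ordinary Luzin theorem suffices.

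Your route --- Frostman measure on the analytic slope set, compact subset of near-full Hausdorff dimension, Jankov--von Neumann selector, then Luzin relative to the Frostman measure --- works, but it is essentially re-deriving a strengthened version of Lemma~\ref{l:compact} in the harder regime where one must preserve Hausdorff dimension rather than positive measure. The paper's ordering sidesteps this completely: by fattening $D(\LL)$ to positive measure \emph{before} any compactification, it never needs to extract a compact direction set of prescribed Hausdorff dimension. One minor slip in your write-up: a single pair of parallel hyperplanes will in general only give $\hdim D_0 \ge \hdim D - \eps$ by countable stability, not equality; but since you already carry an $\eps$ and invoke Corollary~\ref{c:minimal} at the end, this is harmless.
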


%

\begin{proof}
By suitably rotating $E$ we can suppose that
removing from $D$ the directions that are perpendicular to
the first coordinate axis does not decrease the
Hausdorff dimension of $D$.
Let $\LL$ be the collection of those lines in $\R^n$
that are not perpendicular to the first coordinate axis
and intersect $E$ in a set that contains a line segment.
By our assumptions and using  notation \eqref{e:l(a,b)}, 
$\hdim D(\LL)=\hdim D$. 

Fix $s<\hdim D(\LL)=\hdim D$. 
By applying (2) of Lemma~\ref{topsecret} for 
$D(\LL)\su\R^{n-1}$ we obtain
a compact set $C$ with $\ubdim C = n-1-s$ such that
$D(\LL) + C$ has non-empty interior. 
Let $\LL'$ be the collection of the lines and $E'$ be the set
we obtain by applying Lemma~\ref{l:directionaddition}.
Then 
\begin{equation}
\hdim E'\le \hdim E + n-1-s,
\end{equation}
and $E'$
contains line segments with directions in a set of positive measure. 
Then, by Theorem~\ref{all}, this implies that 
there is a compact
Besicovitch set of Hausdorff
dimension at most $\hdim E + n-1-s$.
Since $s<\hdim D$ was arbitrary and 
by Corollary~\ref{c:minimal} there exists a 
compact Besicovitch set in $\R^n$
with minimal Hausdorff dimension,
this implies that in $\R^n$
there exists a compact Besicovitch set
of Hausdorff dimension at most $n-1+\hdim E-\hdim D$.
\end{proof}

Now it is easy to prove our most general result,
which clearly implies Theorem~\ref{all_new} as well.

\begin{proof}[Proof of Theorem~\ref{t:mostgeneral}]
Let $B$ and $F$ be the (typical) sets we obtain when 
Lemma~\ref{newtyp} is applied to $\RP^{n-1}$
(as $E$).
Then clearly $B$ is a compact Besicovitch set with 
minimal Hausodorff dimension, $F$ is a closed
set which contains a line in every direction
and $\hdim F=\hdim B$. 

Let $E$ and $D$ be as in the statement of 
Theorem~\ref{t:mostgeneral}.
By Theorem~\ref{t:segments} there exists
a compact Besicovitch set $B_1\su\R^n$ such that
$\hdim B_1\le n-1+\hdim E-\hdim D$.
Since $B$ is a compact Besicovitch set with
minimal Hausdorf dimension, $\hdim B\le \hdim B_1$,
which completes the proof.
\end{proof}

\section{Proof of Theorem~\ref{t:addpacking}}
\label{s:packing}

The proof is based on a variant of a theorem
of Lorentz \cite{Lo} in additive combinatorics.
For completeness we present a full proof of the
statement we need, by using a simple argument of I.~Z.~Ruzsa \cite{Ru}. 
The following lemma and its short proof
is a straightforward modification of Lemma~5.2 of
\cite{Ru}, which is a more general statement
but only in $\Z_q$.

\begin{lemma}[Ruzsa]\label{l:Ruzsa}
Let $G$ be a finite Abelian group of order $q$
and let $A$ be a subset of $G$ with cardinality
$|A|\ge tq$ for some $0<t<1$ and let
$k=\left\lceil (\log q)/{t} \right\rceil$.

Then there exists $B\su G$ with
cardinality $k$ such that $A+B=G$.
\end{lemma}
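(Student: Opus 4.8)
The plan is to use the probabilistic method: choose $B$ to be a random subset of $G$ of size $k$ (say, $k$ independent uniform samples from $G$, allowing repetitions, which only helps), and show that the expected number of elements of $G$ \emph{not} covered by $A+B$ is strictly less than $1$, so that some choice of $B$ covers everything. Concretely, fix $g\in G$. The element $g$ fails to lie in $A+B$ precisely when $g-b\notin A$ for every $b\in B$, i.e.\ when every one of the $k$ random samples avoids the set $g-A$, which has cardinality $|A|\ge tq$. Hence $g$ is uncovered with probability at most $(1-t)^k$. By linearity of expectation, the expected number of uncovered elements is at most $q(1-t)^k$.

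The key computation is then to check that $q(1-t)^k<1$ for $k=\lceil(\log q)/t\rceil$. Using $1-t\le e^{-t}$, we get $q(1-t)^k\le q e^{-tk}\le q e^{-\log q}=1$, and to get strict inequality one observes that $k\ge(\log q)/t$ forces $e^{-tk}\le 1/q$, and unless $G$ is trivial the bound $1-t<e^{-t}$ is strict; if $q=1$ the statement is vacuous. Thus the expected number of uncovered points is $<1$, so there exists an outcome in which $A+B=G$; this $B$ is a multiset of size $\le k$, and discarding repeated elements gives a genuine subset $B\subseteq G$ with $|B|\le k$ and $A+B=G$. (If one insists on $|B|=k$ exactly, pad $B$ with arbitrary extra elements, which does not destroy $A+B=G$.)

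I would present this in three short steps: (i) set up the random multiset $B$ and write $\Pr[g\notin A+B]\le(1-t)^{k}$ for each fixed $g$; (ii) apply linearity of expectation to bound the expected number of uncovered elements by $q(1-t)^{k}$; (iii) plug in $k=\lceil(\log q)/t\rceil$, use $1-t\le e^{-t}$, conclude the expectation is $<1$, and extract the desired $B$. The only mild subtlety — not really an obstacle — is bookkeeping around sampling with replacement versus the requirement $|B|=k$, and making the final inequality strict; both are handled by the elementary remarks above. No deeper idea is needed, which is why the proof is as short as Ruzsa's original.
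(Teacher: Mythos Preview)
Your proof is correct and follows essentially the same probabilistic argument as the paper: the only cosmetic difference is that the paper samples $B$ uniformly among $k$-element \emph{subsets} of $G$ (getting $\Pr[g\notin A+B]=\binom{q-|A|}{k}/\binom{q}{k}<(1-t)^k$ directly), whereas you sample with replacement and clean up afterwards. Both routes reach the same bound $q(1-t)^k<1$ in the same way, so there is no substantive difference.
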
 

\begin{proof}
The proof is the same as in \cite{Ru}, it is
included for completeness. 
Choose $B$ as a random $k$-element subset
of $G$, by taking each $q \choose k$ $k$-element
subset of $G$ with equal probability.
Fix $g\in G$.
Since $g\not\in A+B$ if and only if $B\cap (g-A)=\emptyset$, the probability of this event is 
\begin{equation}\label{e:binom}
{{q-|A|} \choose k }/ {q \choose k} 
\le {{q(1-t)} \choose k }/ {q \choose k}
 < (1-t)^k.
\end{equation}
Thus the expected value of $|G\sm (A+B)|$ is
less than $(1-t)^k q \le e^{-tk} q \le 1$, so
with positive probability $A+B=G$. 
\end{proof}

\begin{lemma}[Higher dimensional variant
of a theorem of Lorentz \cite{Lo}]
\label{l:Lorentz}
Let $U\subset\{0,1,\ldots,m-1\}^n$ with
$|U|\ge m^\alpha$. 
Then there exists 
$V\subset \{-(m-1), -(m-2), \ldots, m-1\}^n$ 
such that 
$U+V\supset \{0, 1,\ldots, m-1\}^n$ 
and
$|V|\le 2^n \lceil n \cdot m^{n-\alpha} \cdot \log m\rceil$.
\end{lemma}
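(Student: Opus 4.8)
The statement I want to prove is Lemma~\ref{l:Lorentz}, a higher-dimensional version of a theorem of Lorentz, and the natural strategy is to reduce it to the group case I just proved in Lemma~\ref{l:Ruzsa}. The obvious candidate group is $G=\Z_m^n$ (the direct product of $n$ copies of the cyclic group of order $m$), which has order $q=m^n$. The set $U\subset\{0,\ldots,m-1\}^n$ maps injectively onto a subset $\bar U$ of $G$ of the same cardinality, so $|\bar U|\ge m^\alpha=(m^n)^{\alpha/n}=q^{\alpha/n}$; thus I can apply Lemma~\ref{l:Ruzsa} with $t=m^{\alpha-n}$ (so that $tq=m^\alpha$) and obtain a set $\bar V\subset G$ with $|\bar V|=k=\lceil(\log q)/t\rceil=\lceil n m^{n-\alpha}\log m\rceil$ such that $\bar U+\bar V=G$.

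**Lifting back from the group to the integers.** The main issue is that addition in $\Z_m^n$ is modular, whereas in the lemma I need genuine integer addition producing a cover of the cube $\{0,\ldots,m-1\}^n$. The standard fix: lift each element of $\bar V$ to a representative in $\{0,\ldots,m-1\}^n$, and then for each coordinate take \emph{both} that representative and the representative shifted down by $m$. Concretely, if $\bar v=(v_1,\ldots,v_n)$ with $v_i\in\{0,\ldots,m-1\}$, I include in $V$ all $2^n$ points $(v_1-\eps_1 m,\ldots,v_n-\eps_n m)$ with $\eps_i\in\{0,1\}$. Each such point lies in $\{-(m-1),\ldots,m-1\}^n$, and this blows up the cardinality by a factor of at most $2^n$, giving $|V|\le 2^n\lceil n m^{n-\alpha}\log m\rceil$ as claimed. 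To check the covering: given $w\in\{0,\ldots,m-1\}^n$, since $\bar U+\bar V=G$ there are $u\in U$ and $\bar v\in\bar V$ with $u+\bar v\equiv w\pmod m$ coordinatewise; in coordinate $i$ the integer $u_i+v_i$ lies in $\{0,\ldots,2m-2\}$ and is congruent to $w_i$ mod $m$, so either $u_i+v_i=w_i$ or $u_i+v_i=w_i+m$. Choosing $\eps_i=1$ in the latter case and $\eps_i=0$ in the former, the point $v'=(v_1-\eps_1m,\ldots,v_n-\eps_nm)\in V$ satisfies $u+v'=w$. Hence $U+V\supset\{0,\ldots,m-1\}^n$.

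**Where the difficulty lies.** There is essentially no analytic difficulty here — the only substantive input, the probabilistic covering bound, is already done in Lemma~\ref{l:Ruzsa}. The one thing to be careful about is the bookkeeping in the value of $k$: I need $(\log q)/t=(\log m^n)/m^{\alpha-n}=n m^{n-\alpha}\log m$ to come out exactly, which it does, and then $|\bar V|=k=\lceil n m^{n-\alpha}\log m\rceil$, so after the $2^n$ blowup I land precisely on the stated bound. A minor degenerate case worth a sentence: if $\alpha\ge n$ then $|U|\ge m^n$ forces $U=\{0,\ldots,m-1\}^n$ and one can take $V=\{0\}$ (and indeed $n m^{n-\alpha}\log m\le n\log m$ makes the bound harmless), but the argument above goes through regardless since Lemma~\ref{l:Ruzsa} only needs $0<t<1$, i.e.\ $\alpha<n$, which is the only interesting range. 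So the whole proof is: embed in $\Z_m^n$, apply Ruzsa's lemma, lift each chosen group element to its $2^n$ integer ``corners'', and verify the cover by a one-line congruence argument.
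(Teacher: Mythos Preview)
Your argument is essentially identical to the paper's: embed $U$ in $\Z_m^n$, apply Lemma~\ref{l:Ruzsa} with $t=m^{\alpha-n}$, then lift each group element to its $2^n$ integer shifts. One small slip: your claim that every shifted point lies in $\{-(m-1),\ldots,m-1\}^n$ fails when some $v_i=0$ and $\eps_i=1$, since then the coordinate is $-m$; the paper first lands in $\{-m,\ldots,m-1\}^n$ and then observes that any $v\in V$ with a $-m$ coordinate is superfluous for the cover (equivalently, your own covering verification shows $\eps_i=1$ is only ever needed when $v_i\ge 1$), so one may simply discard those points.
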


\begin{proof}
We apply Lemma~\ref{l:Ruzsa} to 
$G=\Z_m^n, q=m^n,t=m^{\alpha-n}$ to obtain a set
$B\su \Z_m^n$ such that $U+B=\Z_m$ and 
$|B|\le \left\lceil(\log m^n)/m^{\alpha-n}\right\rceil
=\lceil n \cdot m^{n-\alpha} \cdot \log m\rceil$.
By taking $2^n$ copies of $B$ we get a 
$V\su \{-m, -(m-1), \ldots, m-1\}^n$ with the right
cardinality and with $U+V\supset \{0, 1,\ldots, m-1\}^n$.
Note that for the last containment any $v\in V$ with
any $-m$ coordinate would
be superfluous, so we can can suppose that 
$V\subset \{-(m-1), -(m-2), \ldots, m-1\}^n$, which
completes the proof.
\end{proof}

The following technical lemma would be obvious
without taking the interiors of the cubes, this
version requires some care.

\begin{lemma}
\label{l:correcterror}
If $Q\su\R^n$ is a closed cube, $A\su \inte Q$ and
$\ubdim A>s$ then there exists arbitrarily large $m$
such that subdividing $Q$ into $m^n$  closed
cubes of equal size, the interior of at least $m^s$ of them
intersect $A$. 
\end{lemma}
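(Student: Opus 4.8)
The plan is to reduce the statement to the definition of upper Minkowski dimension, which counts boxes meeting $A$ from a fixed grid, and then to handle the discrepancy caused by the fact that the grid cubes we are allowed to use are the \emph{interiors} of the subdivision cubes, not the whole closed cubes. First I would recall the box-counting characterisation of $\ubdim$: if $N(\delta)$ denotes the maximal number of $\delta$-separated points of $A$, or equivalently (up to constants) the number of cubes of a fixed $\delta$-grid that meet $A$, then $\ubdim A = \limsup_{\delta\to 0}\frac{\log N(\delta)}{-\log\delta}$. Since $\ubdim A > s$, there is a sequence $\delta_j\to 0$ along which $N(\delta_j)\ge \delta_j^{-s'}$ for some fixed $s' $ with $s<s'<\ubdim A$. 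Rescaling so that $Q=[0,1]^n$, for each such $\delta_j$ take $m=m_j$ to be (roughly) $\lfloor \delta_j^{-1}\rfloor$, so that subdividing $Q$ into $m^n$ congruent closed subcubes of side $1/m$ yields $\gtrsim m^{s'}$ of them meeting $A$; since $s'>s$, for $m$ large this exceeds $m^s$. This gives the statement with ``closed cubes'' in place of ``interiors''.

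The remaining, and genuinely delicate, point is the passage from closed subcubes to their interiors: a point of $A$ could lie on the common boundary of several subcubes and hence fail to lie in the interior of any of them, so naively we might lose all the cubes we counted. The fix is to use the hypothesis $A\su\inter Q$ together with a translation of the grid. Since $A$ is contained in the open cube, $\dist(A,\p Q)>0$; so for $m$ large, a small generic translate of the $m$-subdivision grid keeps all its cubes inside $Q$ near $A$ while moving the grid hyperplanes off $A$. More carefully: the union of the $(n-1)$-dimensional grid hyperplanes, over all translations by vectors in $(1/m)[0,1)^n$, sweeps out $Q$ with bounded multiplicity, so by a Fubini/averaging argument there is a translation vector $\tau$ such that the translated grid hyperplanes miss a definite proportion — in fact, we only need them to miss enough of $A$ that at least $m^s$ translated subcubes have interiors meeting $A$. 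Concretely, for each of the $\gtrsim m^{s'}$ points witnessing the box count, the set of ``bad'' translations (those putting the point on a grid hyperplane) has measure zero; a single translation avoiding all witnesses simultaneously — or, if there are infinitely many witnesses, a translation avoiding a set of arbitrarily small measure, which still leaves $\gtrsim m^{s'} - o(m^{s'})$ good cubes — then does the job, and $m^{s'}-o(m^{s'})\ge m^s$ for large $m$. Finally, a translated subdivision of $Q$ is not literally a subdivision of $Q$ into $m^n$ equal cubes, so I would absorb the boundary layer by instead subdividing into $(m+C)^n$ cubes for a bounded constant $C$ and only using the ``interior block'' of $m^n$ of them aligned with the translate; replacing $m$ by $m+C$ changes $m^s$ only by a bounded factor, harmless since we had the slack $s<s'$.

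The main obstacle I expect is precisely this boundary bookkeeping: making the averaging-over-translations argument coexist with the rigid requirement that we produce an honest subdivision of $Q$ into $m^n$ congruent cubes whose \emph{interiors} are counted. The cleanest route is probably to prove the easy ``closed cube'' version first as a lemma-within-the-lemma, then note that shrinking $Q$ slightly to a closed cube $Q'$ with $A\su \inter Q'\su Q'\su\inter Q$ and applying the easy version to $Q'$ gives cubes of $Q'$; these sit strictly inside $Q$, and one checks they can be realised (after enlarging the subdivision parameter by a bounded amount and possibly a harmless perturbation) as interiors of cubes in a genuine $m^n$-subdivision of $Q$. All the estimates are crude — we are only chasing an exponent $s$ strictly below $s'<\ubdim A$ — so no sharp constants are needed anywhere.
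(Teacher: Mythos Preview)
Your reduction to box-counting is fine and gives the closed-cube version immediately. The gap is exactly where you flagged it, and your proposed fixes do not close it. The translation argument produces a \emph{translated} $m$-grid whose open cubes contain the witness points, but a translated grid is not a subdivision of $Q$ into $m^n$ congruent cubes, and your absorption into an $(m+C)^n$-subdivision of $Q$ does not work: the cubes of an $(m+C)$-grid of $Q$ have side $|Q|/(m+C)\ne |Q|/m$, so there is no ``interior block of $m^n$ of them aligned with the translate''. The shrunken-$Q'$ variant has the same defect --- it yields subdivisions of $Q'$, not of $Q$, and there is no integer $m'$ for which the $m'$-grid of $Q$ matches the $m$-grid of $Q'$. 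The obstruction is structural, not a matter of constants: a witness point with a rational coordinate $p/q$ lies on the grid hyperplanes of \emph{every} $m$ divisible by $q$, so no amount of slack between $s$ and $s'$ helps you land any particular $m$ on a good grid.

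The paper avoids translation entirely with a prime pigeonhole. Fix $N$ large with at least $CN^s$ witness points in distinct closed $N$-cubes. By Bertrand--Chebyshev, pick $n+1$ distinct primes $p_1,\dots,p_{n+1}$ in $[N,2^{n+1}N]$. For a point $x\in\inte Q$ and a fixed coordinate $j$, the equation $x_j=k/p_i$ with $1\le k\le p_i-1$ can hold for at most one $i$ (two distinct primes would force $p_i\mid k$). Hence $x$ lies on the grid hyperplanes of at most $n$ of the primes, so the open $p_i$-cubes for $i=1,\dots,n+1$ together cover $\inte Q\supset A$. By pigeonhole some single $m=p_i$ has at least $CN^s/(n+1)$ witness points in its open cubes; since each open $m$-cube meets at most $2^n$ closed $N$-cubes, at least $CN^s/((n+1)2^n)\ge (2^{n+1}N)^s\ge m^s$ open $m$-cubes meet $A$, once $C=(n+1)2^n 2^{(n+1)s}$.
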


\begin{proof}
\semmi{
Let $\Qc_j$ denote the set of closed cubes we get
by subdividing $Q$ into $m^j$ 
equal cubes, 
let $\Qo_j$ consist of the interiors of the cubes
of $\Qc_j$ and let $G_j$ be the union of the 
open cubes of $\Qo_j$.
The condition $\ubdim A>s$ implies that there exists
arbitrarily large $N$ such that 
$k\ge 2^{(n+1)s}\cdot (n+1)\cdot 2^n \cdot N^s$ of 
the closed cubes of $\Qc_N$ intersect $A$.
Let $a_1,\ldots,a_k\in A$ be chosen from 
different cubes of $\Qc_N$.
By Chebyshev's theorem there exist distinct
primes $p_1,\ldots,p_{n+1}$ between $N$ and 
$M=2^{n+1}N$.
Then $A\subset\inte Q=\cup_{i=1}^{n+1} G_{p_i}$, 
so there exists
an $m=p_i$ such that $G_m$ contains at least 
$\frac{k}{n+1}$ points of $\{x_1,\ldots,x_k\}\su A$.
Since any open cube of $\Qo_m$ intersects at
most $2^n$ cubes of $\Qc_N$ we get that at least
$\frac{k}{(n+1)2^n}\ge M^s\ge m^s$ open cubes of 
$\Qo_m$ intersect $A$.
}
Let $\Qc_m$ denote the set of closed cubes we get
by subdividing $Q$ into $m^n$ grid cubes of equal size, 
let $\Qo_m$ consist of the interiors of the cubes
of $\Qc_m$ and let $G_m$ be the union of the 
open cubes of $\Qo_m$.
The condition $\ubdim A>s$ implies that for every constant $C$ there are arbitrarily large $N$ such that $A$ contains at least $CN^s$ points that are covered by different cubes of $\Qc_N$. Let these be $\{x_1,\ldots, x_k\}$, where $k\ge CN^s$. We will use this for $C=(n+1)2^{n} 2^{(n+1)s}$.

By Chebyshev's theorem there exist distinct
primes $p_1,\ldots,p_{n+1}$ between $N$ and 
$M=2^{n+1}N$.
Then $A\subset\inte Q=\cup_{i=1}^{n+1} G_{p_i}$, 
so there exists
an $m=p_i$ such that $G_m$ contains at least 
$\frac{CN^s}{n+1}$ points of $\{x_1,\ldots,x_k\}\su A$.
Since any open cube of $\Qo_m$ intersects at
most $2^n$ cubes of $\Qc_N$, we get that at least
$\frac{CN^s}{(n+1)2^n}\ge M^s\ge m^s$ open cubes of 
$\Qo_m$ intersect $A$.
\end{proof}

\begin{lemma}
\label{l:zeroHausdorff}
Let $A\su(0,1)^n$ be a non-empty 
compact set such that $\ubdim(A\cap Q)>s$ for any
open cube $Q$ that intersects $A$.
Then there exists a compact set $B\su[0,2]^n$ such
that $A+B\supset [1,2]^n$ and the $n-s$-dimensional
Hausdorff measure of $B$ is zero.
\end{lemma}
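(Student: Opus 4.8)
The plan is to construct $B$ as a nested Cantor-type intersection whose refinement step is powered by Lemma~\ref{l:correcterror} together with Lemma~\ref{l:Lorentz} (the higher-dimensional Ruzsa/Lorentz covering bound). Keeping $A\su(0,1)^n$ fixed, I aim at a compact $B\su[0,2]^n$ with $A+B\supseteq[1,2]^n$. The scheme will maintain, at each stage $k$, a finite family of triples of closed axis-parallel cubes $(P,W,R)$ --- a ``target'' $P$, a ``window'' $W$, and a piece $R$ of the eventual $B$ --- with: $\operatorname{side}P=\operatorname{side}W=\sigma_k$ and $R=(\operatorname{corner}P-\operatorname{corner}W)+[-\sigma_k,\sigma_k]^n$ (so $\operatorname{side}R=2\sigma_k$); $\inte W\cap A\neq\emptyset$; $(A\cap W)+R\supseteq P$; the targets $P$ covering $[1,2]^n$; $\sigma_k\to0$; and the total content $\Phi_k:=\sum(\operatorname{side}R)^{n-s}$, summed over the distinct $R$-cubes present, satisfying $\Phi_{k+1}\le\tfrac12\Phi_k$. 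Stage $0$ is the single triple $P=[1,2]^n$, $W=[0,1]^n$, $R=[0,2]^n$: here $(A\cap W)+R=A+[0,2]^n\supseteq[1,2]^n$ since $A\su(0,1)^n$.

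To refine a triple $(P,W,R)$: since $\inte W$ is an open cube meeting $A$, the hypothesis gives $\ubdim(A\cap\inte W)>s$, so I fix $\eps>0$ with $\ubdim(A\cap\inte W)>s+\eps$. By Lemma~\ref{l:correcterror} (applied to $A\cap\inte W\su\inte W$ with $s+\eps$ in place of $s$) there is an arbitrarily large $m$ for which, subdividing $P$ and $W$ into $m^n$ equal subcubes, the set $U\su\{0,\dots,m-1\}^n$ of indices of subcubes of $W$ with interior meeting $A$ has $|U|\ge m^{s+\eps}$. By Lemma~\ref{l:Lorentz} (equivalently, Lemma~\ref{l:Ruzsa} in $\Z_m^n$) there is $V\su\{-(m-1),\dots,m-1\}^n$ with $U+V\supseteq\{0,\dots,m-1\}^n$ and $|V|\le 2^n\lceil n\,m^{n-s-\eps}\log m\rceil$. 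For each $p\in\{0,\dots,m-1\}^n$ pick $u\in U$, $v\in V$ with $p=u+v$, and introduce the child triple $(P_p,W_u,R^{\mathrm{new}}_v)$, with $P_p,W_u$ the relevant subcubes and $R^{\mathrm{new}}_v:=(\operatorname{corner}P-\operatorname{corner}W)+\tfrac{\sigma_k}{m}v+[-\tfrac{\sigma_k}{m},\tfrac{\sigma_k}{m}]^n$. From $p-u=v$ one checks $P_p-W_u=R^{\mathrm{new}}_v$, hence (using any point of the nonempty set $A\cap W_u$) $(A\cap W_u)+R^{\mathrm{new}}_v\supseteq P_p$; that $R^{\mathrm{new}}_v\su R$ since $\tfrac{\sigma_k}{m}v+[-\tfrac{\sigma_k}{m},\tfrac{\sigma_k}{m}]^n\su[-\sigma_k,\sigma_k]^n$; that the new triples satisfy all invariants; and that the $P_p$ tile $P$. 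Crucially, the distinct child $R$-cubes are indexed only by $v\in V$, so each triple spawns at most $|V|$ new $R$-cubes of side $\tfrac{2\sigma_k}{m}$, of total content $\lesssim_n(\log m)\,m^{-\eps}(\operatorname{side}R)^{n-s}$. Choosing $m$ large enough --- for each triple, depending on $\eps$, $n$, and the current number of triples, and using $(\log m)m^{-\eps}\to0$ --- makes the total content of all children at most $\tfrac12\Phi_k$.

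Set $B_k$ equal to the union of the stage-$k$ $R$-cubes; since $R^{\mathrm{new}}_v\su R$ we have $B_{k+1}\su B_k$, so $B:=\bigcap_k B_k$ is compact. As $B\su B_k$ is covered by the stage-$k$ $R$-cubes, whose diameters are $\le\sqrt n\,(2\sigma_k)\to0$ and whose total $(n-s)$-content is $\Phi_k\le2^{-k}\Phi_0\to0$, we get $\mathcal H^{n-s}(B)=0$. For $A+B\supseteq[1,2]^n$: given $y\in[1,2]^n$, the covering property of targets yields for each $k$ a triple with $y\in P$, hence $a_k\in A\cap W$ and $b_k\in R\su B_k$ with $a_k+b_k=y$; along a subsequence $a_{k_l}\to a\in A$, so $b_{k_l}\to y-a=:b$, and since $b_{k_l}\in B_{k_l}\su B_k$ for $k_l\ge k$ and $B_k$ is closed, $b\in\bigcap_k B_k=B$; thus $y=a+b\in A+B$. (One may take $B\cap[0,2]^n$ instead, which still works since every $b=y-a$ with $y\in[1,2]^n$, $a\in A\su(0,1)^n$ lies in $[0,2]^n$.)

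The two real difficulties both sit in the refinement step. First, the $(n-s)$-content decays only because we use the \emph{strict} inequality $\ubdim(A\cap\inte W)>s$: the Ruzsa/Lorentz bound unavoidably carries a $\log m$ factor, which at the critical exponent $n-s$ would force $\Phi_k$ to grow; the surplus $\eps$ (requiring only $m^{s+\eps}$ occupied subcubes, hence only $\lesssim m^{n-s-\eps}\log m$ shifts) turns the per-step factor into $(\log m)m^{-\eps}$, which we kill by taking $m$ huge. Second, one must exploit the \emph{interior} formulation of Lemma~\ref{l:correcterror}: it guarantees that each new window $W_u$ ($u\in U$) again has $A$ in its interior, so the hypothesis of the theorem reapplies to it and the recursion continues; and because the covering of $[1,2]^n$ is rebuilt from scratch at every stage (cell by cell, via fresh decompositions $z=a'+b'$) rather than transported forward, the behaviour of $A$ on the grid skeletons never enters. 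The bookkeeping point --- that child cubes are indexed by $V$, not by the full grid, and that an $R$-cube may be shared by several triples --- must be tracked for the content estimate to go through.
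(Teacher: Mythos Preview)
Your proof is correct and follows essentially the same scheme as the paper's: both arguments recursively refine pairs of cubes (window in $[0,1]^n$, piece of $B$ in $[0,2]^n$) using Lemma~\ref{l:correcterror} to find at least $m^{s'}$ occupied subcubes and Lemma~\ref{l:Lorentz} to select $\lesssim m^{n-s'}\log m$ shifts, with the slack $s'>s$ absorbing the logarithm so the $(n-s)$-content decays. The only differences are cosmetic---you track the target cubes $P$ explicitly rather than as $\tfrac13(I+J)$, and your side-length $\sigma_k$ should really be a per-triple quantity (or an upper bound) since different triples may be refined with different $m$; neither point affects the argument.
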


\begin{proof}
For every $k=0,1,\ldots$ we construct a
finite collection $\I_k$
of cubes in $[0,1]^n$, a 
finite collection $\J_k$ of cubes
in $[0,2]^n$ and a set $E_k \su \I_k \times \J_k$
such that for every $k$ we have the following:
\begin{enumerate}
\item\label{p:length}
The sidelength of each cube in $\I_k$ and $\J_k$ is
at most $2/2^k$.
\item\label{p:A}
The interior of any $I\in \I_k$ intersects $A$.
\item \label{p:tree}
Every cube $J\in \J_k$ is a subset of some cube 
$J'\in \J_{k-1}$ (if $k\ge 1$).
\item \label{p:sum}
$$
\sum_{J\in\J_k} (\diam J)^{n-s} < (2\sqrt{n})^{n-s-k} .
$$
\item \label{p:double}
If 
$(I,J)\in E_k$
then the sidelength of $J$ is twice
the sidelength of $I$.
\item \label{p:cover}
$$
[1,2]^n \su \bigcup_{(I,J)\in E_k}\frac{1}{3}(I+J),
$$
where 
$\frac{1}{3}(I+J)$ denotes the cube we obtain by shrinking
the cube $I+J$ from its center by $1/3$.
\end{enumerate}

First we show that if we have a construction with
the above properties then 
$$
B=\bigcap_{k=0}^\infty \bigcup_{J\in\J_k} J
$$ 
satisfies all the requirements.
It is clear that $B$ is a compact subset of $[0,2]^n$.
By \eqref{p:sum} we get that the 
$n-s$-dimensional Hausdorff measure of $B$ is zero.
To prove that $[1,2]^n\su A+B$, let $x\in [1,2]^n$.
By \eqref{p:cover}, for each $k$ there exists 
$u_k\in \cup_{I\in\I_k} I$ and 
$v_k\in \cup_{J\in\J_k} J$ such that $u_k+v_k=x$.
By taking convergent subsequences
and using \eqref{p:length} and \eqref{p:A}, 
we get limits $u\in A$
and $v\in B$ such that $u+v=x$.

We construct $\I_k, \J_k$ and $E_k$ with properties (1)-(6)
by induction. Let $\I_0=\{[0,1]^n\}, \J_0=\{[0,2]^n\}$
and $E_0=\I_0\times \J_0$. Suppose that $k\ge 1$ and 
we have already defined $\I_k, \J_k$ and $E_k$ such that
(1),..,(6) hold. 
Let $\eps=(2\sqrt{n})^{n-s-(k+1)}/|E_k|$.
For each $e=(I,J)\in E_k$ we will choose a collection $\I^e$ of subcubes of $I$ and a collection $\J^e$ of subcubes of
$J$ and we will define $\I_{k+1}=\cup_{e\in E_k}\I^e$,
$\J_{k+1}=\cup_{e\in E_k}\J^e$ and 
$E_{k+1}=\cup_{e\in E_k}\I^e\times\J^e$.
Then \eqref{p:tree} clearly holds. 
To show \eqref{p:sum} for $k+1$ it is enough to show that for each
$e\in E_k$ we have 
\begin{equation}\label{e:sum}
\sum_{J'\in \J^e} (\diam J')^{n-s}<\eps.
\end{equation}
To show that \eqref{p:cover} for $k$ implies that \eqref{p:cover} also holds for $k+1$ it is enough to prove that for
any $e=(I,J)\in E_k$ we have
\begin{equation}\label{e:cover}
\frac{1}{3}(I+J)\su \bigcup_{(I',J')\in \I^e\times \J^e}\frac{1}{3}(I'+J').
\end{equation}
 
Fix $=(I,J)\in E_k$. By \eqref{p:double} there exists
$a,b\in\R^n$ and $h>0$ such that $I=a+[0,h]^n$ and 
$J=b+[-h,h]^n$. 
Note that $\frac{1}{3}(I+J)=\frac{1}{3}(a+b+[-h,2h]^n)=
a+b+[0,h]^n$.

By \eqref{p:A} and the assumption of the lemma we have
$\ubdim(A\cap \inte I)>s$, 
so we can fix $s'$ such that $s<s'<\ubdim(A\cap \inte I)$.
Since $s<s'$ there exists an $m_0\ge 2$ such that
for any $m\ge m_0$ we have
\begin{equation}\label{e:epsilon}
2^n \lceil n\cdot m^{n-s'} \cdot \log m\rceil
\left(2\sqrt{n}\frac{h}{m}\right)^{n-s}<\eps.
\end{equation}
By Lemma~\ref{l:correcterror},
there exists $m\ge m_0$
such that subdividing $I$ into $m^n$ equal closed subcubes,
the interior of at least $m^{s'}$ of them intersect $A$.
Let $\I^e$ consist of these closed subcubes. 
Then \eqref{p:A} will automatically hold for $k+1$. 

Let
$$
U=\{u\in\{0,\ldots,m-1\}^n\ :\ \big(a+(h/m)u+(0,h/m)^n 
\big)\cap A\neq\emptyset\}.
$$
Then 
$$
\I^e=\{a+(h/m)u+[0,h/m]^n\ :\ u\in U\},
$$
so
$|U|=|\I^e|\ge m^{s'}$.
By applying Lemma~\ref{l:Lorentz} to $U$ we get 
$V\su\{-m+1,\ldots,m-1\}^n$ such that 
$U+V\supset \{0, 1,\ldots, m-1\}^n$ 
and
\begin{equation}\label{e:Vsize}
|V|\le 2^n \lceil n \cdot m^{n-s'} \cdot \log m\rceil.
\end{equation}
Then let 
$$
\J^e=\{b+(h/m)v+[-h/m,h/m]^n\ :\ v\in V\}.
$$
Then \eqref{p:length} and \eqref{p:double} clearly
holds for $k+1$, so it remains to show \eqref{e:sum}
and \eqref{e:cover}.
Note that \eqref{e:sum} follows from \eqref{e:epsilon}
and \eqref{e:Vsize}.

It remains to check \eqref{e:cover}. 
Note that 
$$
\{(1/3)(I'+J')\ :\ (I',J')\in\I^e\times J^e\}=
\{a+b+(h/m)(u+v)+[0,h/m]^n\ :\ u\in U, v\in V\}.
$$
Since $U+V\supset \{0, 1,\ldots, m-1\}^n$, the union
of these cubes indeed cover 
$\frac{1}{3}(I+J)=a+b+[0,h]^n$,
which completes the proof of \eqref{e:cover}
and the lemma.
\end{proof}


\begin{proof}[Proof of Theorem~\ref{t:addpacking}]
By a result of H.~Joyce and D.~Preiss \cite{JP}
there exists a compact set $A'\su A$ with $\dim_P A'>s$.
We can clearly suppose that $A'\su (0,1)^n$. 
Let $\C$ be the collection of those open 
cubes that intersect $A'$ in a set of packing dimension
at most $s$, and let $\C'$ be those cubes in $\C$ that
have vertices with rational coordinates only. 
Then cubes in $\C$ and $\C'$ have the very same
union $U$ and, since $\C'$ is countable, 
$\dim_P(A'\cap U)\le s$. Thus $A''=A'\sm U$ is a 
non-empty compact
subset of $[0,1]^n$ such that for any open cube $Q$
that intersects $A''$,
$\ubdim(Q\cap A'')\ge \dim_P (Q\cap A'')>s$.
Therefore Lemma~\ref{l:zeroHausdorff} can be applied
to $A''$.
\end{proof}

\section{Proof of Lemma~\ref{topsecret}}
\label{s:keylemma}

Our first goal is to show that (1) implies (2) in Lemma~\ref{topsecret}. 
For this we need the following lemma.
This argument was also used
in \cite[Theorem 2.9]{Ke2}. 

\begin{lemma}\label{standard}
For every $s,t\in[0,n]$ and non-empty
Borel set $A\subset \R^n$ of Hausdorff dimension $s$ there is a compact
set $B\subset \R^n$ of upper Minkowski dimension $t$ such that $A+B$ has Hausdorff dimension $\min(s+t, n)$. If $s+t>n$, it has positive Lebesgue measure.
\end{lemma}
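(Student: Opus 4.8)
The upper bound needs no construction. Since $A+B$ is the image of $A\times B$ under the Lipschitz map $(x,y)\mapsto x+y$, we have $\dim_H(A+B)\le\dim_H(A\times B)\le\dim_H A+\dim_P B\le s+\ubdim B$; so any $B$ with $\ubdim B=t$ forces $\dim_H(A+B)\le\min(s+t,n)$, and when $s+t>n$ this is vacuous, so the positivity of measure has to be built in. All the work is therefore in producing one compact $B$ with $\ubdim B=t$, $\dim_H(A+B)\ge\min(s+t,n)$, and $\inter(A+B)\neq\emptyset$ when $s+t>n$.

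I would first fix $s'<s$ and construct a compact $B_{s'}$ achieving these bounds with $s'$ in place of $s$, and then assemble the final $B$ from a sequence of such pieces. Pick cubes $Q_1\supsetneq$-independent of each other, $Q_j$ of side $2^{-j}$, each meeting $A$ in \emph{full} dimension $s$ (such cubes exist at every scale: the local dimension of a Borel set equals its dimension off a set of strictly smaller dimension). Run the construction below inside $Q_j$ with parameter $s'_j\uparrow s$, obtaining compact $B_{s'_j}$ of diameter $\lesssim 2^{-j}$ with $\ubdim B_{s'_j}\le t$ and $\dim_H\big((A\cap Q_j)+B_{s'_j}\big)\ge\min(s'_j+t,n)$. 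Translate the $B_{s'_j}$ into pairwise disjoint positions accumulating at a single point $p$ and set $B=\{p\}\cup\bigcup_j(B_{s'_j}+v_j)$. Because each piece is tiny and (by the construction) well behaved at every scale, $B$ is compact with $\ubdim B\le t$ (the far pieces cluster near $p$ and cost $O(1)$ boxes, the finitely many remaining ones cost $\lesssim\delta^{-t-o(1)}$ each, and there are only $O(\log(1/\delta))$ of them); pad with a self-similar Cantor set of dimension $t$ to get $\ubdim B=t$ exactly, which does not lower $\dim_H(A+B)$. Then $A+B\supseteq\bigcup_j\big((A\cap Q_j)+B_{s'_j}+v_j\big)$, so $\dim_H(A+B)\ge\sup_j\min(s'_j+t,n)=\min(s+t,n)$, and $A+B$ has nonempty interior once some $s'_j+t>n$.

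The construction of $B_{s'}$ rests on three ingredients. \emph{(i) Regularisation.} Using Frostman's lemma on a compact subset of $A\cap Q_{j}$ of dimension close to $s$, together with a pigeonhole at each scale (discard, at every step, the light descendant cubes, keeping most of the Frostman mass), one passes to a compact $A'\subseteq A\cap Q_j$ with $\dim_H A'$ as close to $s$ as desired that is \emph{approximately $s'$-regular from below along a rapidly increasing sequence of scales} $\ell_0>\ell_1>\cdots$: for each $k$, every surviving cube $J$ of side $\ell_{k-1}$ meeting $A'$ contains at least $M_k^{s'-o(1)}$ cubes of side $\ell_k$ meeting $A'$, roughly evenly spread, where $M_k=\ell_{k-1}/\ell_k$. \emph{(ii) A clustered nested Cantor set} $B_{s'}=\bigcap_k B^{(k)}$: inside each surviving $J$ of $B^{(k-1)}$ we keep $M_k^{t}$ cubes of side $\ell_k$, all of them inside a single sub-cube $J_0\subseteq J$ of side $\ell_{k-1}M_k^{-(1-t/n)}$. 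Clustering is exactly what yields $\ubdim B_{s'}\le t$ with no intermediate-scale blow-up: for $\delta\in[\ell_k,\ell_{k-1}]$ the per-parent box count is $\le\min\!\big(M_k^{t},\,(\ell_{k-1}/\delta)^n M_k^{-(n-t)}\big)\le(\ell_{k-1}/\delta)^t$. \emph{(iii) Choice of patterns.} Since $J_0$ has side $\ge M_k^{t/n}$ in $\ell_k$-grid units and $A'$ is $s'$-regular in $J$, the inequality $n-s'\le t$ (case $s'+t\ge n$), resp.\ $n-s'\ge t$ (case $s'+t\le n$), lets us choose the kept cubes inside $J_0$ — via the Lorentz-type covering lemma (Lemma~\ref{l:Lorentz}), or a random choice as in Lemma~\ref{l:Ruzsa} — so that the $\ell_k$-cubes meeting $A'$ in $J$ plus the kept $\ell_k$-cubes in $J_0$ cover \emph{all} $M_k^n$ children of $J$ when $s'+t>n$, and cover at least $M_k^{\min(s'+t,n)-o(1)}$ of them, roughly evenly, in general. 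Writing $A'_k$ for the union of the $\ell_k$-cubes meeting $A'$, we get a decreasing sequence $A'_k+B^{(k)}$ whose limit is contained in $A'+B_{s'}$ (a limit-point argument: $A'$ compact, $B^{(k)}$ nested compacta). In the first case $A'+B_{s'}$ then contains a fixed cube, hence has nonempty interior; in the second, the mass-distribution principle applied to the natural measure on $\bigcap_k(A'_k+B^{(k)})$ gives $\dim_H(A'+B_{s'})\ge\min(s'+t,n)$.

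The main obstacle is ingredient (i). The clustering in (ii) is not optional — it is forced by the requirement $\ubdim B=t<n$ — and it confines $B$'s pattern in each parent cube to a sub-cube of much smaller side, so the ``$A$-part'' must by itself provide the coarse structure of the parent down to that sub-cube's scale; this can only work if $A$ is uniformly spread at the scales we use. Producing a compact subset of $A$ that is approximately regular along a prescribed sequence of scales while losing only an arbitrarily small amount of Hausdorff dimension, and making this compatible with the combinatorial covering lemma at every scale simultaneously, is the technical heart of the proof. Everything after that is bookkeeping of the kind already carried out in Section~\ref{s:packing}.
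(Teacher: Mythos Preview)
Your plan is a genuinely different route from the paper's, and dramatically more laborious than what is needed. The paper's proof is essentially two lines: pick any compact $B_1$ with $\dim_H B_1=\ubdim B_1=t$ (e.g.\ a self-similar set), so that $\dim_H(A\times B_1)=s+t$; then apply the Marstrand--Mattila projection theorem in $\R^{2n}$ to conclude that the orthogonal projection of $A\times B_1$ to almost every $n$-dimensional subspace has Hausdorff dimension $\min(s+t,n)$ and, when $s+t>n$, positive Lebesgue measure. Since for an invertible $L:\R^n\to\R^n$ the map $(x,y)\mapsto x+Ly$ is, up to a linear change of coordinates, such a projection, a generic affine image $B=L(B_1)$ of $B_1$ does the job. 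No construction, no regularisation, no multiscale bookkeeping.

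By contrast, your approach attempts to \emph{build} $B$ by hand via a clustered Cantor construction matched to a regularised subset of $A$, invoking the Lorentz-type covering lemma scale by scale. The outline is plausible, and the clustering trick in (ii) is a nice way to control $\ubdim$ at intermediate scales, but you yourself identify the real difficulty: step (i), producing a compact $A'\subseteq A$ that is approximately Ahlfors $s'$-regular from below along a prescribed sparse sequence of scales while losing arbitrarily little Hausdorff dimension, \emph{and} uniformly enough that the covering combinatorics in (iii) go through in every surviving cube simultaneously. This is doable, but it is genuinely delicate (it is not just Frostman plus pigeonhole at each scale independently --- one must nest the refinements), and you have only asserted it. There is also some looseness in the assembly step: the claim that $\ubdim\big(\{p\}\cup\bigcup_j(B_{s'_j}+v_j)\big)\le t$ needs the $B_{s'_j}$ to satisfy a \emph{uniform} $\delta^{-t-o(1)}$ box bound with $o(1)$ independent of $j$, which requires coordinating the scale sequences across $j$.

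In short: your plan could likely be made to work, and has the virtue of being self-contained (no projection theorem), but it is an order of magnitude longer and leaves its hardest step as an exercise. The intended proof sidesteps all of this by invoking Marstrand--Mattila.
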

\begin{proof}
Let $B_1$ be a compact set (for example a self-similar set)
with $\hdim B_1=\ubdim B_1=t$.
Then $\hdim (A\times B_1)=s+t$, so by the projection 
theorem of Marstrand and Mattila, its orthogonal
projection to almost every $n$-dimensional subspace
of $\R^{2n}$ has Hausdorff $\min(s+t,n)$ and it has
positive Lebesgue measure if $s+t>n$.
This implies that a suitable affine copy of $B_1$
has all the properties we wanted for $B$. 
\end{proof}

\begin{proof}[Proof of (1)$\Rightarrow$(2) in Lemma~\ref{topsecret}]
Let $A\su\R^n$ be a non-empty Borel set of Hausdorf 
dimension greater than $s$.
By a theorem of Davies \cite{Da52}, $A$ contains a 
compact subset of Hausdorf 
dimension greater than $s$, so we can suppose
that $A$ is compact.
By Lemma~\ref{standard} there exists a compact set 
$B_1\su\R^n$ of upper Minkowski dimension $n-s$ such 
that $A+B$ has positive Lebesgue measure.
By applying (1) of Lemma~\ref{topsecret} to the compact set
$A+B$ we obtain a compact set $E_1$ of upper Minkowski
dimension zero such that $(A+B)+E_1$ has non-empty
interior. Then $E=B+E_1$ is a compact set of
upper Minkowski dimension $n-s$ such that $A+E$
has non-empty interior. 
\end{proof}


It remains to prove (1) of Lemma~\ref{topsecret}.
Let $d_j$ and $m_j$ be increasing sequences of integers. Let $1\le m_j \le d_j^n$. We now define a random compact set. 
For each positive integer $d$ we divide $[0,1]^n$ into $d^n$ grid cubes of equal side length
and denote by $f^d_1,\ldots,f^d_{d^n}$ the homothetic
maps that map $[0,1]^n$ onto these cubes.
Consider the finite sequences 
$$
(i_1,\ldots,i_k)\in
\{1,\ldots,m_1\}\times\ldots\times\{1,\ldots,m_k\}
$$
and to each of them we assign a random number
$j(i_1,\ldots,j_k)\in\{1,\ldots,d_k^n\}$
by choosing for each 
$(i_1,\ldots,i_{k-1})$ 
distinct $j(i_1,\ldots,i_{k-1},1),\ldots,
j(i_1,\ldots,i_{k-1},m_k)$ randomly such that
each selection has probability ${d_k^n \choose m_k}^{-1}$. 
Let
$$
E_k=\bigcup_{i_1=1}^{m_1}\ldots\bigcup_{i_k=1}^{m_k}
f^{d_1}_{j(i_1)}\circ f^{d_2}_{j(i_1,i_2)}\ldots\circ f^{d_k}_{j(i_1,\ldots,i_k)}
([0,1]^n)
$$ 
and let $E=\bigcap E_k$. 


With this construction, the upper Minkowski dimension of $E$ (always) satisfies
\beq\label{dimE}
\limsup_k \frac{\log(m_1\ldots m_k)}{\log(d_1 \ldots d_k)} \le \overline\dim_M(E) \le \limsup_k \frac{\log((m_1\ldots m_k) m_{k+1})}{\log((d_1 \ldots d_k) m_{k+1}^{1/n})}.
\eeq
We will only use the upper bound. 

Fix $$d_k=2^k$$
and
$$m_k=\min(k^{2n+5}, d_k^n);$$
this is a choice that will work but it is not optimal in any sense.
Then the random compact set $E$ has Minkowski dimension zero by \eqref{dimE}.
Thus the following theorem will complete the proof
of Lemma~\ref{topsecret}.


\begin{theorem}\label{t:key}
Let $A\subset \R^n$ be a 
Lebesgue measurable
set of positive Lebesgue measure. Let $E$ be the random compact set as described above. Then $A+E$ has non-empty interior almost surely.
\end{theorem}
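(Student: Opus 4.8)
First I would reduce to a convenient normal form. By inner regularity of Lebesgue measure we may replace $A$ by a compact subset of positive measure, so assume $A$ compact. Pick a Lebesgue density point $x_0$ of $A$: for every $\eps_0>0$ there is $\rho>0$ with $|A\cap(x_0+[-\rho,\rho]^n)|>(1-\eps_0)(2\rho)^n$. Applying the homothety taking $x_0+[-\rho,\rho]^n$ onto $[0,1]^n$ and intersecting, we may assume $A\subset[0,1]^n$ is compact with $|A|>1-\eps_0$, for any prescribed $\eps_0$. (This only replaces the random $E$ by a homothetic copy of itself, which is harmless: Lemma~\ref{topsecret}~(1) merely asks for \emph{some} compact set of upper Minkowski dimension zero, and a homothetic copy of the set constructed above is again such; the literal statement is obtained by the same argument with the target cube below placed near $x_0$ rather than rescaling $A$.) The normalisation is used solely through the following elementary fact. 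Write $\delta_k:=(d_1\cdots d_k)^{-1}=2^{-\sigma_k}$, $\sigma_k=k(k+1)/2$, for the side length of a level-$k$ cube. Since $\sum_Q|Q\setminus A|=|[0,1]^n\setminus A|<\eps_0$ summed over the $\delta_k$-grid cubes $Q$ of $[0,1]^n$, for \emph{every} $k$ all but an $\eps_0/\eta$ fraction of these grid cubes have $A$-density $\ge1-\eta$; hence any (not necessarily grid-aligned) $\delta_k$-cube whose $\le2^n$ grid neighbours are all $(1-\eta)$-dense has $A$-density at least $1-2^n\eta\ge\tfrac12$ once $\eta$ is small.

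\textbf{The core: iterated random covering.} The heart of the matter is a level-by-level argument in the spirit of Lemma~\ref{l:zeroHausdorff}, with Lorentz's lemma replaced by its probabilistic shadow: if $U\subset\{0,\dots,d-1\}^n$ has $|U|\ge\theta d^n$ and $V$ is a uniformly random $m$-element subset of $\{0,\dots,d-1\}^n$, then the probability that $U+V$ fails to equal $\{0,\dots,d-1\}^n$ modulo $d$ is at most $d^n(1-\theta)^m$ --- exactly the union bound from the proof of Lemma~\ref{l:Ruzsa}. I would fix a target cube $W$ (a grid cube at some level $k_1$, positioned so that the translates of $A$ used to cover it stay well inside $[0,1]^n$) and let $\G_k$ be the event, measurable with respect to the choices up to level $k$, that $W\subset A+E_k$, together with a robustness clause: every retained level-$k$ cube $C$ relevant to this cover has $A$-density $\ge1-\eta$ after rescaling $C$ onto $[0,1]^n$. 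As $A$ is compact, $A+E_k=\bigcup_{C\in\mathcal C_k}(A+C)$ over the retained level-$k$ cubes, and $A+C=\bigcup_{C'\subset C}(A+C')$ over its $\delta_{k+1}$-subcubes; so the refinement step asks that, inside each relevant $C$, the $m_{k+1}$ randomly kept subcubes still cover $(A+C)\cap W$. For a point $x$ to be lost, the random $m_{k+1}$-subset chosen inside $C$ must miss the set of indices $v$ with $x\in A+p_C+\delta_{k+1}v+[0,\delta_{k+1}]^n$, and by the density fact above this index set has $\ge\tfrac12 d_{k+1}^n$ elements. After a routine discretisation of $W$ at scale $\delta_{k+1}$ (about $2^{\,n\sigma_{k+1}}$ positions to check), the union bound yields
\[
\Pr\bigl(\G_k\setminus\G_{k+1}\bigr)\ \le\ 2^{\,n\sigma_{k+1}}\,2^{-m_{k+1}},
\]
summable in $k$ because $m_k=k^{2n+5}$ eventually and $2n+5>2$ (any polynomial growth of degree $>2$ suffices), while $m_k\le d_k^n$ keeps $\ubdim E=0$ via \eqref{dimE}. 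By Borel--Cantelli, conditionally on $\G_{k_1}$ almost surely $\G_k$ holds for all $k\ge k_1$, so $W\subset\bigcap_k(A+E_k)=A+E$. Since $\Pr(\G_{k_1})>0$ (at each level the subcubes needed to realise $\G_{k_1}$ lie among the $m_j$ kept ones with positive probability), $\Pr(\inter(A+E)\neq\emptyset)>0$; and the bound depends only on $n,\eps_0$ --- not on the particular $A$, nor on the parameter family, so it holds verbatim with $(d_j,m_j)_{j\ge1}$ replaced by any tail $(d_j,m_j)_{j\ge K}$.

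\textbf{From positive to full probability.} To boost this to probability one I would use the self-similar structure. Conditionally on the configuration at level $k_0$, the construction inside each of the $M_{k_0}=m_1\cdots m_{k_0}$ retained level-$k_0$ cubes is an independent copy of the construction with tail parameters, and inside a retained cube $C$ of $A$-density $\ge1-\eps_0$ we have $A+(E\cap C)=\phi_C\bigl(\phi_C^{-1}(A\cap C)+E^{(C)}\bigr)$ with $\phi_C^{-1}(A\cap C)$ a compact subset of $[0,1]^n$ of measure $\ge1-\eps_0$. Feeding this into the previous paragraph gives $\Pr\bigl(\inter(A+(E\cap C))\neq\emptyset\mid\mathcal F_{k_0}\bigr)\ge p>0$ for each such $C$. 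By the Lebesgue density theorem along the nested grid, for $k_0$ large a definite fraction of all level-$k_0$ grid cubes have $A$-density $\ge1-\eps_0$, and since each grid cube is retained with the same probability, a concentration estimate shows that with probability $\to1$ at least, say, half of the $M_{k_0}$ retained cubes are of this type; as the corresponding sub-events are independent and $M_{k_0}\to\infty$, one gets $\Pr(\inter(A+E)\neq\emptyset)\ge(1-o(1))\bigl(1-(1-p)^{M_{k_0}/2}\bigr)\to1$, hence equals $1$.

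\textbf{Main obstacle.} The delicate part is the core step: choosing $\G_k$ so that it is simultaneously (a) strong enough that $\bigcap_k\{W\subset A+E_k\}$ genuinely yields a cube inside $A+E$ and (b) stable under the \emph{blind} random refinement with summable failure probability. The two pressure points are the boundary bookkeeping --- the estimate ``index set $\ge\tfrac12 d_{k+1}^n$'' is valid only when $x-p_C$ lies well inside $[0,1]^n$ and avoids the few low-density grid cubes, which forces the careful placement of $W$ and the tracking of which retained cubes count as ``relevant'' --- and the counting, namely that the $\approx2^{n\sigma_k}$ conditions checked at level $k$ stay far below the reliability $1-2^{-m_k}$ of one random selection while $m_k\ll d_k^n$ keeps the Minkowski dimension at $0$ (this is precisely where the interplay of the growth rates of $m_k$ and $d_k$ is used). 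The final passage from positive to full probability needs the positive-probability bound to be genuinely uniform over the set and over the tail parameter families, which is why it is cleanest to isolate that bound first.
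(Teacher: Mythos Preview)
Your overall architecture---iterated random covering, a union bound at each level, Borel--Cantelli to obtain positive probability, then a boost to probability one---is exactly the paper's. Two points, however, separate your sketch from a proof.

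\emph{The density input.} From $|A|>1-\eps_0$ you correctly deduce that at every scale $\delta_k$ all but an $\eps_0/\eta$ fraction of the grid cubes have $A$-density $\ge1-\eta$. But this is a statement about \emph{most} cubes, not about the specific translates $x-C$ (with $x\in W$, $C$ a retained level-$k$ cube) that actually occur in the iteration; and your ``index set has $\ge\tfrac12 d_{k+1}^n$ elements'' needs precisely that $x-C$ avoid the bad cubes. Your robustness clause is meant to close this gap, yet as written it does not parse: a retained cube $C$ is a piece of $E_k$, and ``$A$-density of $C$'' is not the relevant quantity---what matters is the density of $A$ in the translate $x-C$, which depends on $x$. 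If you track that honestly you are forced to let the threshold drift (a parent with $A$-density $1-\eta_k$ in $x-C$ can have a positive fraction of children with density below $1-\eta_k$ in $x-C'$), and keeping the drifting $\eta_k$ below $1$ while the failure probabilities remain summable is a genuine bookkeeping task you have not carried out. The paper sidesteps this entirely with Lemma~\ref{maximal}: one excises low-density dyadic cubes once and for all to obtain a compact $A'\subset A$ in which \emph{every} point has density $\ge\eps_k=1/k^2$ at scale $r_k\approx\eps_k/(d_1\cdots d_k)$. The decay $\eps_k\to0$ is unavoidable in general (take $A=[0,1]^n$ minus a tiny cube), and it is precisely what forces $m_k\sim k^{2n+5}$; with your claimed constant density $1/2$ any $m_k\sim k^{2+\eps}$ would do, so your parameter justification and your density claim are in tension. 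The paper also absorbs the discretisation/boundary issues by working with concentric shrinkings $E''_k\subset E'_k\subset E_k$ (side ratios $\beta_k<\alpha_k<1$), proving $A+E'_{k+1}\supset A+E'_k$ via a $\delta$-net and $E''_{k+1}$; your proposal gestures at this step but does not specify it.

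\emph{From positive to full probability.} Your self-similarity/independence argument can be made to work, but the paper's route is a one-liner: altering finitely many levels replaces $E$ by a set $E'$ such that each of $E,E'$ is covered by finitely many translates of the other; by Baire, $\inter(A+E)\neq\emptyset$ iff $\inter(A+E')\neq\emptyset$, so the event is a tail event and Kolmogorov's zero--one law upgrades positive probability to probability one immediately. This also spares you the need for a positive-probability bound uniform over all tail parameter families.
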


\begin{proof}
It is enough to show that the probability in question is positive since Kolmogorov's zero--one law can be applied: We claim that it is a tail event that the set $A+E$ has non-empty interior. That is, if we change the construction of $E$ arbitrarily in finitely many levels to obtain $E'$, then $A+E$ has non-empty interior if and only if $A+E'$ has non-empty interior. Indeed, $E'$ can be covered by finitely many translates of $E$ and vice versa. Therefore the same applies to $A+E$ and $A+E'$. The Baire category theorem concludes the argument.

The following statement enables us to replace the set $A$ with another that has uniform lower bounds on the local densities around every point.

\begin{lemma}\label{maximal}
Let $r_k$, $\eps_k$ be decreasing
sequences of positive reals tending to zero such that 
$$\sum \eps_k <\infty.$$
Let $A_0\subset \R^n$ be a 
Lebesgue measurable set of positive measure. 
Then there is a non-empty 
compact subset $A\subset A_0$ 
such that
\beq\label{r0}
\exists k_0 \ge 1 \ \ 
\forall k\ge k_0 \  \ \forall x\in A \ \  \lambda(A\cap 
(x+[-r_k,r_k]^n)
)\ge (2r_k)^n \eps_k.
\eeq
\end{lemma}

\begin{proof}
By regularity we may assume that $A_0$ is compact. 
Note that if \eqref{r0} holds for $A$ then it also
holds for its closure, so we do not need to 
guarantee the compactness of $A$.

By modifying $\eps_k$, we may assume that each $r_k$ is an integer power of $1/2$ and it is enough to prove that the density of $A$ in every closed dyadic grid cube of side length $r_k$ it intersects is at least $\eps_k$ (for every sufficiently large $k$). 

By applying Lebesgue's density theorem,
choose $k_0$ and a dyadic grid cube $I$ of side length $r_{k_0}$ such that
\begin{equation}\label{e:felesnegyed}
\lambda(A_0\cap I)\ge \lambda(I)/2 \qquad \textrm{and} \qquad
\sum_{k=k_0}^\infty \eps_k < 1/4,
\end{equation}
and let $A_1=A_0\cap I$.
We define a finite or infinite sequence of nested sets $A_1 \supset A_2 \supset\cdots$ recursively.
Suppose that $A_j$ is already defined.
Take a maximal (closed) dyadic grid cube 
$J_j\subset I$ with side length $r_{k_j}$ such that $J_j$ intersects $A_j$ and 
the density of $A_j$ in $J_j$ is less than $\eps_{k_j}$.
If there is no such cube, then we let 
$A=A_{j}$. Otherwise let $A_{j+1}=A_j\sm J_j$ and continue the procedure.
If this procedure does not terminate after finitely many
steps then let $A=\cap_{j=1}^{\infty} A_j$. 

If the sequence terminates after finitely many steps, then the density of $A$ in every dyadic grid cube of side length $r_k$ it intersects is at least $\eps_k$ for 
every $k\ge k_0$. Otherwise, although the sequence $k_j$ is not necessarily
non-decreasing, it must be unbounded since there are only
finitely many dyadic grid cubes of given size in $I$.
Using also that we always took maximal cubes, this implies
that the density of $A$ in every dyadic grid cube of side length $r_k$ it intersects is at least $\eps_k$ for 
every $k\ge k_0$.

It remains to prove that $A$ is non-empty.
Using first the fact that
for each $k$ the subtracted cubes $J_j$ of side length
$r_k$ are nonoverlapping subcubes of 
$I$ and then \eqref{e:felesnegyed}, we obtain
$$\lambda(A) \ge \lambda(A_1) - \lambda(I)\sum_{k=k_0}^\infty \eps_k  \ge \lambda(I)/2 - \lambda(I)/4 >0.$$
%
\end{proof}


\bigskip
We continue with the proof of Theorem~\ref{t:key}.
Recall that $d_k$ and $m_k$ are already defined.
Let 
$$
\eps_k=1/k^2
\qquad \textrm{and} \qquad
r_k=\frac{\eps_k}{20nd_1\cdots d_k}.
$$
By applying Lemma~\ref{maximal}, 
we may assume that the set $A$ is compact
for which \eqref{r0} holds. 

Our aim is, essentially, to show that if $$A+E_k \supset [0,1]^n,$$ then with large probability, $$A+E_{k+1} \supset [0,1]^n.$$
We will prove a similar implication, but we will need to consider some additional sets. Let
$$\alpha_k=1-\eps_k/(10n),$$
and 
$$\beta_k=1-2\eps_k/(10n).$$
For each $k\ge 0$, consider the $m_1\cdots m_k$ many cubes forming $E_k$ and replace each of these with another cube of the same midpoint but of side length $\alpha_k/(d_1\cdots d_k)$. Take their union to obtain the set $E'_k$. We also form $E''_k$ from $E_k$ in the same way but replacing $\alpha_k$ with $\beta_k$.


\begin{claim}
There exists $k_0$ such that for every $k\ge k_0$,
$$\mathbb{P}\big(A+E'_{k+1} \supset A+E'_k)\ge 1-2^{-k}.$$
In fact the same estimate holds conditionally if we fix $E_k$ arbitrarily.
\end{claim}

\begin{proof}

Assume that $E_k$ is fixed. Fix any $x\in A+E'_k$. First we will estimate the probability that $$x\in A+E''_{k+1}.$$

Since $x\in A+E'_k$, we can fix $a\in A$ and $e\in E'_k$ such that $x=a+e$. Let $I'$ be the cube of $E'_k$ that contains $e$. Let $I$ be the corresponding cube of $E_k$.
%
Consider the $d_{k+1}^n$ many grid cubes that $I$ is divided into during the construction of the random set $E_{k+1}$. Let these be $I_j$ ($1\le j\le d_{k+1}^n$). We will denote the corresponding $\beta_k$-shrinked versions of these by $I''_j$. Then the random set $$E''_{k+1}\cap I$$ contains the union of $m_{k+1}$ many of these cubes $I''_j$ chosen uniformly randomly.

Denoting side lengths of cubes by $|\cdot|$, now consider the cube
$$J=[-|I|\eps_k/(20n), \ |I|\eps_k/(20n)]^n=[-r_k,r_k]^n.$$
By the choice of $\alpha_k$, we have $$I'+J=I.$$
Since $e\in I'$, we have
$$e+J\subset I.$$
Let $\I$ be the collection of those cubes $I_j$ that are contained in $e+J$.
Similarly, let $\I''$ be the collection of those cubes $I''_j$ that are contained by $e+J$.
Then we have
$$\lambda\left((e+J) \cap \bigcup \I\right) \ge 
(|J|-2|I|/d_{k+1})^n = 
|J|^n \left(1-\frac{20n}{\eps_k d_{k+1}}\right)^n$$
and
\begin{eqnarray}
\label{a1}
\lambda\left((e+J) \cap \bigcup \I''\right)
&\ge & \beta_k^n |J|^n\left(1-\frac{20n}{\eps_k d_{k+1}}\right)^n
\ge |J|^n\left(\beta_k-\frac{20n}{\eps_k d_{k+1}}\right)^n 
\nonumber \\
&\ge & |J|^n\left(1-\frac{3\eps_k}{10n}\right)^n
\ge |J|^n\left(1-\frac{3\eps_k}{10}\right),
\end{eqnarray}
assuming, in the penultimate inequality, that $k$ (and thus $d_k$) is large enough so that
$$\frac{20n}{\eps_k d_{k+1}} < \frac{\eps_k}{10n}$$
holds.

If $k$ is large enough, by \eqref{r0}, we have
$$\lambda(A\cap (a-J)) \ge \eps_k |J|^n.$$
Using that $x=a+e$ and that Lebesgue measure is isometry invariant, we obtain
\beq\label{a2}
\lambda\Big((e+J)\cap (x-A)\Big) \ge \eps_k |J|^n.
\eeq
Combining \eqref{a1} and \eqref{a2} we obtain
\beq\label{a3}
\lambda\Big((e+J)\cap (x-A) \cap \bigcup \I''\Big) \ge (\eps_k-3\eps_k/10)|J|^n \ge 7\eps_k |J|^n/10 
= c_n \eps_k^{n+1} |I|^n,
\eeq
where $c_n=7/(10^{n+1}n^n)$.
This implies that $x-A$ intersects at least 
$$
c_n\eps_k^{n+1} d_k^n
$$
many cubes $I''_j$ (of $\I''$). If $E''_{k+1}$ contains one of these, say $I''_j$, then
$$\emptyset \neq (x-A) \cap I''_j \subset (x-A) \cap E''_{k+1}$$  
and therefore $x\in A+E''_{k+1}$.
Since by construction $E''_{k+1}$ contains 
uniformly randomly chosen $m_{k+1}$ many of 
$I_1'',\ldots,I''_{d_{k+1}^n}$,
as in \eqref{e:binom} we obtain that
the probability that $E''_{k+1}$ does not contain any of 
the at least $c_n \eps_k^{n+1} d_{k+1}^n$ many 
$I_j''$ that intersect $x-A$ is at most
$$(1-c_n \eps_k^{n+1})^{m_{k+1}}.$$ 
Therefore the probability of $x\notin A+E''_{k+1}$ is at most 
\beq\label{Px}
(1-c_n \eps_k^{n+1})^{m_{k+1}}.
\eeq


Now let
$$\delta=\eps_{k+1}/(20n d_1 \cdots d_{k+1}).$$
Comparing $\delta$ with $\alpha_{k+1}$ and $\beta_{k+1}$ we obtain 
\beq\label{b1}
B(E''_{k+1}, \delta) \subset E'_{k+1}.
\eeq
 
For simplicity, assume that $A\subset [0,1]^n$. Let $S$ be a maximal $\delta$-separated subset of $A+E'_k\subset [0,2]^n$.
Then $$|S|\le C_n\delta^{-n}$$ for some constant $C_n$ depending on $n$ only, and we have 
\beq\label{b2}
A+E'_k \subset B(S, \delta).
\eeq

Now note that if $S\subset A+E''_{k+1}$ (which may or may not hold), then (by \eqref{b1} and \eqref{b2}),
$$A+E'_k \subset B(S,\delta) \subset B(A+E''_{k+1}, \delta)  = A+B(E''_{k+1},\delta) \subset A + E'_{k+1}.$$

Now vary $x$ over the points of $S$. By \eqref{Px}, the probability that $S\subset A+E''_{k+1}$ is at least
$$1-|S|(1-c_n \eps_k^{n+1})^{m_{k+1}}.$$
Therefore 
$$\mathbb{P}(A+E'_k \subset A+E'_{k+1}) \ge 1-|S|(1-c_n \eps_k^{n+1})^{m_{k+1}}$$
where the probability can be understood conditional on fixing $E_k$ arbitrarily.
The choice of $d_k=2^k$ and $m_k=k^{2n+5}$ (for large enough $k$) gives that
$$|S|\le C'_n \,2^{nk^2} / \eps_{k+1}^n = 
C'_n \,2^{nk^2} (k+1)^{2n}$$
and
$$(1-c_n \eps_k^{n+1})^{m_{k+1}} 
= \left(1-\frac{7}{(10k^2)^{n+1}n^n}\right)^{(k+1)^{2n+5}} 
\le 2^{-C''_n k^3},$$
where the constants $C'_n$, $C''_n$ (and later $C'''_n$
as well) are positive and depend only on $n$.
Therefore 
$$\mathbb{P}(A+E'_k \subset A+E'_{k+1}) 
\ge 1-C'_n(k+1)^{2n} 2^{nk^2} 2^{-C''_n k^3}  \ge 1- 2^{-C'''_n k^3} \ge 1-2^{-k}$$
when $k$ is large enough. This finishes the proof of the Claim.
\end{proof}

Applying the claim for $k\ge k_0$, we obtain that with positive probability,
$$A+E'_{k_0} \subset A+E'_{k_0+1} \subset \cdots.$$
Therefore, with positive probability,
$$A+E'_{k_0} \subset A+E.$$
Since $E'_{k_0}$ and thus $A+E'_{k_0}$ have non-empty interior, this finishes the proof of Theorem~\ref{t:key}.
\end{proof}


\bigskip

\begin{remark}
It is possible to reduce this problem on the group $\R$ to a similar one on the group
 $$G = \prod_k \Z / d_k\Z.$$
Then the above argument can be simplified as there is no need to introduce the sets $E'_k$ or $E''_k$.
\end{remark}
\semmi{
\subsection{Option 1}
We may assume that the same density bound applies in each dyadic interval that intersects $A'$ and has diameter at most $r_0$. We may assume $A=A'\subset [0,1]^n$, $r_0=1$.

Let $$T=\sum_k \{-1,0,1\}\frac{1}{d_1\ldots d_k}$$
Then $\dim_B T =0$ and $$A+E+T\supset A+_\text{no carry} E.$$
Using this, we may reduce the problem on $\R$ to a similar problem on the group $$G = \prod_k \Z / d_k\Z.$$
After this point, there are less technical details to handle.

\subsection{Option 2}
We just work with $\R$.
}

\end{document}